\documentclass[11pt,letterpaper]{amsart}
\usepackage{amsmath,amssymb,amsthm,amscd,graphicx,mathrsfs,url,dsfont,enumitem}
\usepackage[usenames,dvipsnames]{color}
\usepackage[colorlinks=true,linkcolor=Blue,citecolor=Green,urlcolor=Blue]{hyperref}
\usepackage{dsfont}
\usepackage[super]{nth}
\usepackage[open, openlevel=2, depth=3, atend]{bookmark}
\hypersetup{pdfstartview=XYZ}
\usepackage{epigraph}
\usepackage{mathtools}
\usepackage{enumitem}
\usepackage{mathrsfs}

\usepackage{graphicx} 
\usepackage{float} 
\usepackage{subfigure} 
\usepackage[usenames,dvipsnames]{color}
\usepackage{comment}




\setlength{\textheight}{8.6in} \setlength{\oddsidemargin}{0.00in}
\setlength{\evensidemargin}{0.00in} \setlength{\textwidth}{6.4in}
\setlength{\topmargin}{0.00in} \setlength{\headheight}{0.18in}
\setlength{\marginparwidth}{1.0in}

\setlength{\epigraphwidth}{0.45\textwidth}

\makeatletter
\@namedef{subjclassname@2020}{\textup{2020} Mathematics Subject Classification}
\makeatother

\setcounter{tocdepth}{1}
\let\oldtocsection=\tocsection
\let\oldtocsubsection=\tocsubsection
\let\oldtocsubsubsection=\tocsubsubsection
\renewcommand{\tocsection}[2]{\hspace{0em}\oldtocsection{#1}{#2}}
\renewcommand{\tocsubsection}[2]{\hspace{1em}\oldtocsubsection{#1}{#2}}
\renewcommand{\tocsubsubsection}[2]{\hspace{2em}\oldtocsubsubsection{#1}{#2}}

\DeclareGraphicsRule{*}{mps}{*}{}

\renewcommand{\tilde}{\widetilde}          
\DeclareMathSymbol{\leqslant}{\mathalpha}{AMSa}{"36} 
\DeclareMathSymbol{\geqslant}{\mathalpha}{AMSa}{"3E} 
\DeclareMathSymbol{\eset}{\mathalpha}{AMSb}{"3F}     
\renewcommand{\leq}{\;\leqslant\;}                   
\renewcommand{\geq}{\;\geqslant\;}                   
\renewcommand{\d}{\mathrm{d}}             
\renewcommand{\emptyset}{\eset}

\numberwithin{equation}{section}

\newtheorem{theorem}{Theorem}[section]
\newtheorem{lemma}[theorem]{Lemma}
\newtheorem{proposition}[theorem]{Proposition}
\newtheorem{corollary}[theorem]{Corollary}

\theoremstyle{remark}
\newtheorem{remark}{Remark}
\newtheorem{example}{Example}
\theoremstyle{definition}

\newcommand{\C}{\mathbb{C}}
\newcommand{\D}{\mathbb{D}}
\newcommand{\R}{\mathbb{R}}
\newcommand{\Z}{\mathbb{Z}}

\newcommand{\N}{\mathbb{N}}
\newcommand{\Q}{\mathbb{Q}}

\newcommand{\E}{\mathbb{E}}
\renewcommand{\P}{\mathbb{P}}
\renewcommand{\S}{\mathbb{S}}

\renewcommand{\Im}{\mathrm{Im}}
\renewcommand{\Re}{\mathrm{Re}}

\newcommand{\cT}{\mathcal{T}}

\newcommand{\cE}{\mathcal{E}}
\newcommand{\cC}{\mathcal{C}}
\newcommand{\cH}{\mathcal{H}}
\newcommand{\cF}{\mathcal{F}}

\newcommand{\cN}{\mathcal{N}}

\newcommand{\cV}{\mathcal{V}}

\newcommand{\cP}{\mathcal{P}}
\newcommand{\ind}{\mathds{1}}

\newcommand{\cW}{\mathcal{W}}
\newcommand{\cD}{\mathcal{D}}

\newcommand{\laweq}{\overset{\text{law}}{=}}
\newcommand{\cI}{\mathcal{I}}

\renewcommand{\hat}{\widehat}

\newcommand{\del}{\partial}

\newcommand{\bS}{\mathbf{S}}

\newcommand{\bv}{\mathbf{v}}

\newcommand{\norm}[1]{\left\Vert #1\right\Vert}

\newcommand{\cU}{\mathcal{U}}

\newcommand{\cQ}{\mathcal{Q}}

\newcommand{\bA}{\mathbf{A}}
\newcommand{\bL}{\mathbf{L}}

\newcommand{\bP}{\mathbf{P}}

\newcommand{\bH}{\mathbf{H}}

\newcommand{\A}{\mathbb{A}}

\newcommand{\cJ}{\mathcal{J}}
\newcommand{\boldw}{\boldsymbol{w}}
\newcommand{\bolds}{\boldsymbol{s}}

\author{Guillaume Baverez}
\address{Institut de Math\'ematiques de Marseille, Aix--Marseille Universit\'e.}
\email{guillaume.baverez@gmail.com}

\author{Baojun Wu}
\address{Beijing International Center for Mathematical Research (BICMR) }
\email{wubaojunmathe@outlook.com}

\title[Irreducible representations in LCFT]{Irreducible Virasoro representations\\ in Liouville conformal field theory}

\keywords{Random fields, conformal field theory, Virasoro algebra, complex analysis}
\subjclass[2020]{60G60, 17B68, 81T40}

\begin{document}	
	\begin{abstract}
This paper studies the analytic continuation of Liouville eigenstates and shows that they assemble into irreducible highest-weight representations of the Virasoro algebra, for all values of the conformal weights. This builds on previous results from the first author and Guillarmou, Kupiainen, Rhodes \& Vargas, where such representations were constructed except for the conformal weight on the Kac table. In order to extend these results to the degenerate weights, we find explicit analytic expressions for the Virasoro descendants and uncover the probabilistic meaning of the Kac table.

In the algebraic approach to conformal field theory, the irreducibility is a crucial property that must be satisfied by the representations in the spectrum, and is usually taken as an axiom. Computationally, it leads to the celebrated null-vector (or BPZ) equations for correlation functions and conformal blocks, which are the cornerstone of the integrability of the theory. 
\end{abstract}
	
	\maketitle
	\setcounter{tocdepth}{1}
	\tableofcontents

\section{Introduction}

	\subsection{Motivation and background}\label{subsec:motivation}
The history of two-dimensional statistical mechanics at criticality is dotted with striking predictions from physics which required a wealth of mathematical techniques to find a rigorous proof. Notable examples include Cardy's crossing formula in percolation \cite{Cardy_crossing,Smirnov_crossing}, and the connective constant of the honeycomb lattice \cite{Nienhuis1,Nienhuis2,DuminilSmirnov}. The most puzzling aspect of these formulas is that their derivation seems completely disconnected from the underlying probabilistic model: rather, they rely on algebraic methods coming from the representation theory of an infinite dimensional Lie algebra called the Virasoro algebra. What justifies these methods is the conjecture that critical systems have conformally invariant scaling limits \cite{Polyakov70}, and the Virasoro algebra is the corresponding symmetry algebra. This principle led to the groundbreaking work of Belavin--Polyakov--Zamolodchikov (BPZ) \cite{BPZ84}, laying the foundations of two-dimensional conformal field theory (CFT) and giving predictions for critical exponents of many statistical mechanical systems (the ``minimal models" which include the Ising model). 

In general, it is far from obvious how to relate the algebraic approach to the probabilistic one, and this article establishes one such link in the context of the probabilistic formulation of Liouville CFT \cite{DKRV16}. The exact solvability of this theory takes the form of an analytic formula for the three-point correlation function on the sphere (a.k.a. the structure constant): this is the celebrated ``DOZZ formula", derived in \cite{DornOtto94,ZZ95}, supported by Teschner \cite{Teschner_DOZZ}, and proved in \cite{KRV_DOZZ}. This proof (as well as the mating-of-trees approach to Liouville theory \cite{MatingOfTrees}) inspired tremendous developments in probability theory, with recent proofs of exact formulas for boundary Liouville CFT \cite{ARS25_FZZ,ARSZ23} and the conformal loop ensemble \cite{ACSW1,ACSW2}.
At the core of the proof of the DOZZ formula are the famous BPZ equations, which are PDEs satisfied by certain correlation functions of the theory. While these PDEs have been established rigorously in the probabilistic setting \cite{KRV19_local}, their origin is representation theoretic in nature, and one of the goals of this paper is to bring this hidden algebraic structure into the probabilistic world.

First steps in this direction have recently been made: in \cite{GKRV20_bootstrap}, the authors define a Hilbert space $\cH$ acted on by a positive, self-adjoint operator $\bH$ (the \emph{Liouville Hamiltonian}), which they diagonalise completely. In \cite{BGKRV22}, together with the first named author, they define an infinite family of (unbounded) operators $(\bL_n,\tilde{\bL}_n)_{n\in\Z}$ on $\cH$ which form two commuting representations of the Virasoro algebra, and generalise the Hamiltonian in the sense that $\bH=\bL_0+\tilde{\bL}_0$. This construction implies that the eigenstates of $\bH$ carry some information of both analytic and algebraic flavour; in particular, they organise into families of \emph{highest-weight representations} \cite[Theorem 4.5]{BGKRV22}, parametrised by a scalar called the \emph{conformal weight}. Each such representation has a distinguished state called the \emph{primary state}, and the others are the \emph{descendants}.

The algebraic origin of the BPZ equations lies in the assumption that the representations involved are \emph{irreducible}. For certain values of the conformal weight (the \emph{Kac table} \eqref{eq:kac}, labelling the \emph{degenerate representations}), this gives a crucial piece of information: certain states in the representation satisfy non-trivial linear relations (see Example \ref{ex:level_two} for a concrete instance of this phenomenon). This information, together with the \emph{conformal Ward identities}, leads eventually to the BPZ equations. The Ward identities are beyond the scope of this article, but the reader may think of them as a black box turning a descendant state into a partial differential operator; hence a linear relation between descendant states translates into a PDE. While the Ward identities and BPZ equations have been established rigorously in the probabilistic language \cite{KRV19_local,Zhu20_BPZ,GKRV20_bootstrap}, it would have been extremely hard to guess them without the algebraic input. It is therefore of great importance to get a firm probabilistic grasp on these algebraic objects. Our main result makes a new step in this direction by proving the irreducibility of the degenerate representations using the probabilistic formulation of Liouville CFT (Theorem~\ref{thm:singular}), thus answering an important question on the structure of the spectrum of the theory.

 The physical motivation for the axiom of irreducibility is that reducible representations include non-zero states of vanishing norm\footnote{with respect to the Shapovalov form (Appendix \ref{app:virasoro}). For the expert reader, the Shapovalov form can be expected to coincide with the $L^2$-product (i.e. the ``physical" inner-product) for unitary theories only. See \cite{BJ24,GQW25} for an example of non-unitarity for the loop version of Schramm--Loewner evolutions.} (a.k.a. \emph{singular vectors}), which are deemed unphysical. Of course, this is not a mathematical argument, and this axiom should in fact be proved when studying a concrete CFT. However, it seems like this axiom has never been questioned in physics until only relatively recently. In fact, new developments in physics and mathematics have shown that the question of the nature of singular vectors (i.e. whether or not they vanish) deserves a great deal of attention. In physics, the question has sparked some interest in relation to loop models, statistical mechanics, and Liouville theory with central charge less than~1 \cite{JacobsenSaleur,JacobsenSaleur21,SantachiaraPicco,RibaultSantachiara}. In boundary Liouville CFT, the BPZ equations hold only if the cosmological constants satisfy suitable relations \cite{Ang23_zipper}, a sign that some singular vectors don't vanish otherwise. Indeed, this is related to the \emph{higher equations of motion}, predicted in \cite{Belavin2_bHEM} and established in \cite{BaverezWu_hem,Cercle_HEM}. The article \cite{BaverezWu_hem} also studied the bulk version of these equations, which had been conjectured earlier in \cite{Zamolodchikov03_HEM}. New versions of these equations appeared in a recent work on Toda theories \cite{CercleHuguenin}. In the distinct (but related) theory of Schramm--Loewner evolutions (more precisely the SLE loop measure \cite{Zhan21}), the nature of singular vectors was  elucidated by the first named author and Jego \cite{BJ24}. In the chordal version of SLE, the level-two BPZ equations have been linked early on to martingale observables, but the precise link with the field content of the theory is only partially understood (see \cite{Peltola_sleReview} for a comprehensive review of this vast literature). 

The remainder of this article is organised as follows. In Section \ref{subsec:results}, we state our main results, Theorems \ref{thm:poisson} and \ref{thm:singular}. Theorem \ref{thm:poisson} deals with the analytic continuation of the so-called Poisson operator \cite[Sections 6-7]{GKRV20_bootstrap}, which roughly speaking sends eigenstates of the free theory to eigenstates of the interacting (Liouville) theory. This theorem is the technical tool which allows us to prove Theorem \ref{thm:singular} on the structure of degenerate representations, as explained in Section \ref{subsec:singular} and the proof outline of Section \ref{subsec:proof_overview}. In Section~\ref{sec:background}, we give some background on Liouville CFT and Feigin--Fuchs modules. Section~\ref{sec:poisson} is dedicated to the proof of Theorem \ref{thm:poisson} (which implies Theorem \ref{thm:singular}). Appendix \ref{app:virasoro} records some useful facts and terminology about the Virasoro algebra and its representations.

	\subsection{Main results}\label{subsec:results}
	Here and in the sequel, $\gamma$ and $\mu$ are parameters satisfying
	\[\gamma\in(0,2)\qquad\text{and}\qquad\mu>0.\]
	We will sometimes allow $\mu$ to take complex values, provided $\Re(\mu)>0$. We also set,
	\[Q:=\frac{\gamma}{2}+\frac{2}{\gamma}>2;\qquad c_\mathrm{L}:=1+6Q^2>25.\]
	Given $r,s\in\N^*=\Z_{>0}$, we set
	\[\alpha_{r,s}:=(1-r)\frac{\gamma}{2}+(1-s)\frac{2}{\gamma},\]
	and for all $\alpha\in\C$:
	\[\Delta_\alpha:=\frac{\alpha}{2}\left(Q-\frac{\alpha}{2}\right).\]
	Observe that $\Delta_\alpha=\Delta_{2Q-\alpha}$. The \emph{Kac tables} are the discrete sets
\begin{equation}\label{eq:kac}	
	kac^-:=\{\alpha_{r,s}|\,r,s\in\N^*\}\subset(-\infty,0]\qquad\text{and}\qquad kac^+:=2Q-kac^-\subset[2Q,\infty).
	\end{equation}
	We also set $kac:=kac^-\cup kac^+$.

In the remainder of this section, we introduce our main results. In order to lighten the presentation, we give only the minimal amount of information required to state the results, and postpone the detailed background to Section \ref{sec:background}.
	
	\subsubsection{The Poisson operator}\label{subsubsec:poisson}
	
	Let $\cF=\C[(\varphi_n)_{n=1}^\infty]$ be the space of polynomials in countably many complex variables $\varphi_n$. Given a sequence $\mathbf{k}=(k_n)_{n=1}^\infty$ of non-negative integers with finitely many non-zero terms, we define the polynomial $\varphi^\mathbf{k}:=\prod_{n=1}^\infty\varphi_n^{k_n}\in\cF$, and we call $|\mathbf{k}|:=\sum_{n=1}^\infty nk_n$ the \emph{level} of $\mathbf{k}$ (and $\varphi^\mathbf{k}$). We have a grading of $\cF$ by the level
	\[\cF=\oplus_{N\in\N}\cF_N,\]
with $\cF_N$ spanned by the monomials at level $N$.

One can define an infinite measure $\d c\otimes\P_{\S^1}$ on the Sobolev space $H^{-1}(\S^1)$, and the \emph{Liouville Hilbert space} is $\cH=L^2(\d c\otimes\P_{\S^1})$. Here, $\P_{\S^1}$ is a Gaussian measure on the space of distributions on $\S^1$ with vanishing mean (with respect to the Haar measure on $\S^1$), and $\d c$ is the Lebesgue measure on $\R$. See Section \ref{subsec:hilbert_space} for details. Functions (or distributions) with vanishing mean on the circle have a Fourier expansion $\varphi(e^{i\theta})=\sum_{n\in\Z\setminus\{0\}}\varphi_ne^{ni\theta}$, with $\varphi_{-n}=\bar{\varphi}_n$. This allows us to identify $\cF$ as a linear subspace of $L^2(\P_{\S^1})$. 
	
	Acting on $\cH$, there are two positive, essentially self-adjoint operators $\bH^0$, $\bH$, called respectively the \emph{free field and Liouville Hamiltonians}. The free field Hamiltonian is given by 
\begin{equation}\label{eq:hamiltonian_ff}	
	\bH^0:=-\frac{1}{2}\partial_c^2+\frac{1}{2}Q^2+2\sum_{n=1}^\infty\bA_{-n}\bA_n+\widetilde{\mathbf{A}}_{-n} \widetilde{\mathbf{A}}_n,
	\end{equation}
	where for all $n\geq1$,
	\begin{align*}
		& \mathbf{A}_n=\frac{i}{2} \partial_n, \quad \mathbf{A}_{-n}=\frac{i}{2}\left(\partial_{-n}-2 n \varphi_n\right) \\
		& \widetilde{\mathbf{A}}_n=\frac{i}{2} \partial_{-n}, \quad \widetilde{\mathbf{A}}_{-n}=\frac{i}{2}\left(\partial_n-2 n \varphi_{-n}\right) \\
		& \mathbf{A}_0=\widetilde{\mathbf{A}}_0=\frac{i}{2}\left(\partial_c+Q\right) .
	\end{align*}
	Here $\partial_n:=\del_{\varphi_n}$ and $\partial_{-n}:=\del_{\bar\varphi_n}$ denote the complex derivatives in the variable $\varphi_n$, $n\geq1$.

	The Liouville Hamiltonian is the Schr\"odinger-type operator 
\begin{equation}\label{eq:liouville_hamiltonian}
	\bH=\bH^0+\mu e^{\gamma c}V(\varphi),
\end{equation}
 where the potential is formally $V(\varphi):=\int_{\S^1}e^{\gamma\varphi(\theta)}d\theta$, and is understood rigorously thanks to a Cameron--Martin formula \eqref{eq:potential}. Both operators $\bH^0$, $\bH$ generate contraction semi-groups $(e^{-t\bH^0})_{t\geq0}$, $(e^{-t\bH})_{t\geq0}$ acting on $\cH$, see Section \ref{subsec:semigroups}. The operator $\bH^0$ is easy to diagonalise \cite[Section 4]{GKRV20_bootstrap}, in particular the function $e^{(\alpha-Q)c}\varphi^\mathbf{k}$ is a (generalised) eigenstate of $\bH^0$ with eigenvalue $2\Delta_\alpha+|\mathbf{k}|$, for each $\alpha\in\C$ and sequence $\mathbf{k}$ as above.
	
	One of the main inputs of \cite{GKRV20_bootstrap} is to diagonalise $\bH$. It is shown in Sections 6-7 therein that the long-time asymptotic of the heat flow of $\bH$ can produce generalized eigenstates. More precisely, for all $\chi\in\cF_{|\mathbf{k}|}$ and $\alpha$ in a neighbourhood of $-\infty$ (depending on the level of $\chi$, see \cite[Proposition 7.2]{GKRV20_bootstrap}), we have the following limit in a weighted space $e^{-\beta c}\cD(\cQ)$:
	\[\cP_\alpha(\chi)=\underset{t\to\infty}\lim\,e^{t(2\Delta_\alpha+|\mathbf{k}|)}e^{-t\bH}\left(\chi e^{(\alpha-Q)c}\right),\]
	and the map $\alpha\mapsto\cP_\alpha(\chi)$ is analytic in this region. Here, $\cD(\cQ)\subset\cH$ is the domain of the quadratic form of $\bH$ (see Section \ref{subsec:semigroups}) and $\beta$ is a suitable real number. The region provided in \cite[Proposition 7.2]{GKRV20_bootstrap} gets smaller as the level of the polynomial increases. For each $\alpha$, $$\cP_\alpha=\oplus_{N\in\N}\cP_{\alpha,N}:\cF\to e^{-\beta c}\cD(\cQ)$$ is a linear map and is called the \emph{Poisson operator}, where $\cP_{\alpha,N}$ is the restriction of $\cP_\alpha$ to the (finite-dimensional) space $\cF_N$. 
	
	It is easy to deduce from \cite{GKRV20_bootstrap,BGKRV22} that the Poisson operator extends meromorphically to all $\alpha\in\C$ with values in $e^{-\beta c}\cD(\cQ)$ such that $\beta>Q-\Re(\alpha)$. Our first result is to discard the possibility of poles.
	\begin{theorem}\label{thm:poisson}
		For each $\chi\in\cF$, the map $\alpha\mapsto\cP_\alpha(\chi)$ is analytic on the whole $\alpha$-plane, i.e. it has no poles.
	\end{theorem}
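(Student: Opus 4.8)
The plan is to rule out poles by a spectral argument: at any putative pole the leading Laurent coefficient of $\cP_\alpha(\chi)$ would be a genuine $L^2$-eigenvector of the Liouville Hamiltonian $\bH$, of which there are none. The bulk of the work is to show that this Laurent coefficient really does land in $\cH$, and the obstruction to this will be exactly the Kac table.

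For $\Re(\alpha)\ll 0$ the state $\cP_\alpha(\chi)$, with $\chi\in\cF_N$, is a generalised eigenstate: $\bH\,\cP_\alpha(\chi)=(2\Delta_\alpha+N)\,\cP_\alpha(\chi)$ in the weighted space $e^{-\beta c}\cD(\cQ)$. Both sides continue meromorphically in $\alpha$ (the eigenvalue $\alpha\mapsto 2\Delta_\alpha$ being entire), so this identity persists throughout the plane. Hence, if $\alpha\mapsto\cP_\alpha(\chi)$ had a pole of order $m\geq 1$ at some $\alpha_0$, its leading Laurent coefficient $R:=\lim_{\alpha\to\alpha_0}(\alpha-\alpha_0)^m\cP_\alpha(\chi)$ would be a \emph{nonzero} element of $e^{-\beta c}\cD(\cQ)$ satisfying $\bH R=(2\Delta_{\alpha_0}+N)R$. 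Since $\bH$ is self-adjoint and, by the diagonalisation of \cite{GKRV20_bootstrap}, has no $L^2$-eigenfunctions (its spectrum being the continuous half-line $[\tfrac{Q^2}{2},\infty)$), it suffices to prove that $R$ actually lies in the \emph{unweighted} form domain $\cD(\cQ)\subset\cH$. Indeed, the form eigenvalue equation then upgrades to the operator one (using that $e^{-t\bH}$ is smoothing), so $R$ is an honest eigenvector, forcing $R=0$ --- a contradiction.

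To locate $R$ in $\cD(\cQ)$ I would control $\cP_\alpha(\chi)$ at both ends $c\to\pm\infty$. As $c\to+\infty$ nothing is needed: the potential $\mu e^{\gamma c}V(\varphi)$ is confining, so every element of $\cD(\cQ)$ already decays there, and the weight $e^{-\beta c}$ only records a possible growth as $c\to-\infty$. As $c\to-\infty$, the structure of the meromorphic continuation (coming in \cite{GKRV20_bootstrap,BGKRV22} from expanding the heat flow $e^{-t\bH}$ in powers of $e^{\gamma c}$ via iterated Duhamel formulas) provides a decomposition
\[\cP_\alpha(\chi)=\sum_{k=0}^{K}(-\mu)^k\,e^{(\alpha-Q+k\gamma)c}\,\Phi_{\alpha,k}(\chi)\;+\;G_\alpha(\chi),\qquad G_\alpha(\chi)\in\cD(\cQ),\]
with $K$ finite (chosen large depending on $\Re(\alpha)$ and $\gamma$), the $\Phi_{\alpha,k}(\chi)$ being explicit Fock-space-valued ``connection coefficients''. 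If each $\alpha\mapsto\Phi_{\alpha,k}(\chi)$ is analytic, then a pole of $\cP_\alpha(\chi)$ can only come from $G_\alpha(\chi)$, so $R\in\cD(\cQ)$ and the previous paragraph applies.

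The real content is therefore the analyticity of the connection coefficients. The level-$0$ case $\chi=\mathbf 1$ should be amenable to a direct analysis: the $\Phi_{\alpha,k}(\mathbf 1)$ are built from Liouville one-point quantities with Seiberg-admissible insertions, the only subtlety being apparent resonance poles from the $s$-integrals in the Duhamel expansion, which sit off the Kac table and can be dispatched separately. For a general $\chi\in\cF_N$ I would pass to Virasoro descendants using the intertwining $\bL_{-\mathbf n}\circ\cP_\alpha=\cP_\alpha\circ L^{\mathrm{FF},\alpha}_{-\mathbf n}$ of the Feigin--Fuchs and Liouville actions \cite{BGKRV22}, which gives, for each monomial $\varphi^{\mathbf k}$ with $|\mathbf k|=N$,
\[\cP_\alpha(\varphi^{\mathbf k})=\sum_{|\mathbf n|=N}(M_N(\alpha)^{-1})_{\mathbf k,\mathbf n}\,\bL_{-\mathbf n}\,\cP_\alpha(\mathbf 1),\]
where $M_N(\alpha)$ is the (polynomial) change of basis from the Virasoro monomials $L^{\mathrm{FF},\alpha}_{-\mathbf n}\mathbf 1$ to the $\varphi$-monomials. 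This reduces everything to level $0$ --- except that $\det M_N(\alpha)$ is a Kac determinant, vanishing exactly at the degenerate values $\alpha\in kac$ with $rs\leq N$ (cf.\ \eqref{eq:kac}), so the formula threatens poles there. \textbf{The main obstacle is to show that these are removable singularities.} I expect this to follow from genuinely explicit analytic formulas for the descendants $\bL_{-\mathbf n}\cP_\alpha(\mathbf 1)$ valid near $\alpha\in kac$, exhibiting the cancellation of the apparent poles against the vanishing of the singular combinations of descendants dictated by the reducibility of the Feigin--Fuchs module --- which is precisely where a probabilistic reading of the Kac table (a Gaussian multiplicative chaos with a degenerate insertion, i.e.\ the mechanism underlying the BPZ equations) must be brought in. An induction on the level $N$, with level $0$ as base case and the displayed identity propagating analyticity upward while the Kac cancellations are verified level by level, should then complete the proof of Theorem~\ref{thm:poisson}.
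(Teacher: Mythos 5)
Your high-level strategy is genuinely different from the paper's: you want to show that the leading Laurent coefficient $R$ at a putative pole would be a form-domain ($L^2$) eigenvector of $\bH$, hence zero by absence of point spectrum. That reduction is attractive, but the proposal does not actually prove the one thing it needs, namely that $R\in\cD(\cQ)$. Your route to this is to (i) split $\cP_\alpha(\chi)$ into explicit non-$L^2$ ``connection coefficient'' terms plus an $L^2$ remainder, and (ii) show the connection coefficients are analytic by reducing to level $0$ via the intertwining relation and the inverse change-of-basis matrix $M_N(\alpha)^{-1}$. But $\det M_N(\alpha)$ is a Kac determinant, so step (ii) reintroduces apparent poles exactly on $kac^-$, and you yourself flag their removability as ``the main obstacle,'' offering only the expectation that explicit formulas will exhibit the cancellation. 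That cancellation \emph{is} the content of the theorem; nothing in the proposal supplies a mechanism for it. (There is also a smaller unaddressed point: even granting analytic connection coefficients, you must show the remainder $G_\alpha$ is meromorphic \emph{as a $\cD(\cQ)$-valued map} near $\alpha_0$, so that its Laurent coefficients land in $\cD(\cQ)$ rather than merely in $e^{-\beta c}\cD(\cQ)$.)

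For comparison, the paper never invokes the absence of point spectrum. It writes the descendants as explicit singular integrals $\cI_{r,\bolds}(\alpha)$ (Lemma \ref{lem:expression_descendants}), proves a sharp fusion/absolute-convergence estimate (Proposition \ref{prop:regular_rectangle}) showing in particular that the rectangular integral $\cI_{r,(s)}$ is \emph{already absolutely convergent} at $\alpha_{r,s}$, derives induction formulae \eqref{eq:init_rec}--\eqref{eq:rec} of the form $-\frac{\gamma}{2}(\alpha-\alpha_{r^*,s_1})\cI_{r,\bolds}=(\text{lower-type integrals})$ which locate all possible poles on the Kac table, and then --- for $\gamma^2\notin\Q$, so that the values $\alpha_{r',s'}$ are pairwise distinct --- concludes that no integral of level $\leq rs$ can have a pole at $\alpha_{r,s}$ (Proposition \ref{cor:level_rs}); the result is propagated to all levels by the Virasoro action (Corollary \ref{prop:regular_descendants}) and to all $\gamma\in(0,2)$ by the continuity argument of Proposition \ref{prop:gamma_continuous}. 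These are precisely the ingredients your proposal would need in order to close its central gap.
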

	This theorem is proved in Section \ref{sec:poisson}, and an overview of the main steps of the proof is given in Section \ref{subsec:proof_overview}. We will explain in the next section how it is related to the structure of singular representations, and state our main result on these representations.

	\subsubsection{Liouville singular modules}\label{subsec:singular}
This section uses some representation theoretic terminology, which we have summarised in Appendix \ref{app:virasoro} for the reader's convenience. 

 For each $\alpha\in\C$, let $\bA^\alpha_0:=\frac{i}{2}\alpha$, and
\begin{equation}\label{eq:sugawara}	
	\begin{aligned}
		&\bL_n^{0,\alpha}:=i(\alpha-(n+1)Q)\bA_n+\sum_{m\neq\{0,n\}}\bA_{n-m}\bA_m,\qquad n\neq0;\\
		&\bL_0^{0,\alpha}:=\Delta_\alpha+2\sum_{m=1}^\infty\bA_{-m}\bA_m.
	\end{aligned}
\end{equation}
These operators are known as the \emph{Sugawara (or Feigin--Fuchs) construction} and represent the Virasoro algebra on $\cH$. See Section \ref{subsec:ff_modules} for details and references on this construction. 

	Given a non-increasing sequence of positive integers $\nu=(\nu_1,...,\nu_\ell)$, we write $\bL^{0,\alpha}_{-\nu}:=\bL_{-\nu_\ell}^{0,\alpha}...\bL_{-\nu_2}^{0,\alpha}\bL_{-\nu_1}^{0,\alpha}$. Let $\cV_\alpha^0\subset\cF$ be the linear span of the states $\bL_{-\nu}^{0,\alpha}\ind$, for all finite sequences $\nu$ of non-increasing integers. Here, the linear span is in the algebraic sense (finite linear combinations), and $\ind$ is the constant function 1. This is a highest-weight representation with central charge $c_\mathrm{L}$ and highest-weight $\Delta_\alpha$. The classification of such modules is well-known and reviewed in Section \ref{subsec:ff_modules}. Briefly, for $\Re(\alpha)<Q$, the module $\cV_{2Q-\alpha}^0$ is always Verma (hence $\cV_{2Q-\alpha}^0=\cF$), while $\cV_\alpha^0$ can be the quotient of the Verma by a non-trivial submodule (in which case $\cV_\alpha^0$ is a proper subspace of $\cF$). Thus, there is a canonical projection $\Phi_\alpha^0:\cV_{2Q-\alpha}^0\to\cV_\alpha^0$, and it is analytic as a function of $\alpha$ in the region $\Re(\alpha)<Q$ (as explained after the statement of Lemma \ref{lem:frenkel}). We then define $\Phi_\alpha:=\cP_\alpha\circ\Phi_\alpha^0:\cF\to e^{-\beta c}\cD(\cQ)$, and introduce the \emph{Liouville module}
	\begin{equation}\label{eq:def_module}
		\cV_\alpha:=\mathrm{ran}(\Phi_\alpha)\simeq\cF/\ker(\Phi_\alpha).
	\end{equation}
The range of $\Phi_\alpha$ is in a weighted space $e^{-\beta c}\cD(\cQ)$ for some $\beta>0$, and $\cD(\cQ)$ is the domain of the quadratic form of $\bH$. This is the space where the eigenstates of $\bH$ can naturally be defined. See Section \ref{subsec:semigroups} for details.
	
	If we don't assume Theorem \ref{thm:poisson}, this expression is only well-defined on the domain of analyticity of $\cP_\alpha$. In this region, for all $\chi\in\cF$, $\Phi_\alpha(\chi)$ can be expressed as a (finite) linear combination of generalised eigenstates of $\bH$ \cite[Section 6.9]{GKRV20_bootstrap}. Moreover, it was shown in \cite[Theorem 4.4.]{BGKRV22} that these eigenstates have an analytic continuation to the whole $\alpha$-plane, and satisfy a reflection formula that can be compactly stated as $\Phi_{2Q-\alpha}=R(\alpha)\Phi_\alpha$, where $R(\alpha)\in\C$ is the \emph{reflection coefficient} \cite[(4.4)]{BGKRV22} (this reflection formula expresses the fact that the scattering matrix is diagonal). Due to the analyticity of $\Phi_\alpha$, we get that $\cP_\alpha$ extends meromorphically to the region $\{\Re(\alpha)<Q\}$, and its poles may only occur on the zeros of $\Phi_\alpha^0$. Since $\Phi_\alpha^0$ is the canonical projection from the Verma module to $\cV_\alpha^0$, its kernel is a submodule of the Verma module, hence it may only be non-trivial on the \emph{Kac table} $kac^-$ \eqref{eq:kac} (see also Appendix \ref{app:virasoro}). Thus, $\alpha\mapsto\cP_\alpha(\chi)$ is meromorphic on $\C$ with possible poles contained in $kac^-$, for all $\chi\in\cF$.
	
	The next result is that $\cV_\alpha$ is the highest-weight Virasoro representation of $(\bL_n)_{n\in\Z}$, and we give a complete classification.  Here, the Virasoro representation $(\bL_n)_{n\in\Z}$ on $\cV_\alpha$ is the one constructed probabilistically in \cite[Theorem 1.1]{BGKRV22}, and the Poisson operator has the property of intertwining the free field representation $(\bL_n^{0,\alpha})$ with the Liouville one
	$$\cP_\alpha\circ\bL_n^{0,\alpha}=\bL_n\circ\cP_\alpha$$

	The next statement uses some notation and terminology from Appendix~\ref{app:virasoro}. In particular, $M(c_\mathrm{L},\Delta)$ denotes the Verma module of charge $c_\mathrm{L}$ and weight $\Delta$, and $V(c_\mathrm{L},\Delta)$ is its irreducible quotient. 
		
	\begin{theorem}\label{thm:singular}
		For $\alpha\in \C$, the space $\cV_\alpha$ defined in \eqref{eq:def_module} is the highest-weight representation of the Virasoro algebra. Moreover, it has the following structure:
		\begin{itemize}
			\item For $\alpha\notin kac$, $\cV_\alpha$ is isomorphic to the Verma module $M(c_\mathrm{L},\Delta_\alpha)$ and it is irreducible.
			\item For $\alpha\in kac^-$, $\cV_\alpha$ is isomorphic to the irreducible quotient $V(c_\mathrm{L},\Delta_\alpha)$ of the Verma module $M(c,\Delta_\alpha)$ by the maximal proper submodule.
			\item For $\alpha\in kac^+$, $\cV_\alpha$ is isomorphic to $V(c_\mathrm{L},\Delta_\alpha)$ when $\alpha \notin(Q+\frac{2}{\gamma}\N^*)\cup (Q+\frac{\gamma}{2}\N^*)$ and $\cV_\alpha =0$ when $\alpha \in(Q+\frac{2}{\gamma}\N^*)\cup (Q+\frac{\gamma}{2}\N^*)$.
		\end{itemize}
	\end{theorem}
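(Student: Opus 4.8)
The plan is to derive Theorem~\ref{thm:singular} from Theorem~\ref{thm:poisson} by a soft representation-theoretic argument, treating the half-planes $\{\Re(\alpha)\leq Q\}$ and $\{\Re(\alpha)>Q\}$ separately and connecting them through the reflection formula of \cite{BGKRV22}. Write $v_\alpha:=\cP_\alpha(\ind)=\Phi_\alpha(\ind)$ for the primary state. First I would observe that $\cV_\alpha$ is always a highest-weight module: the Sugawara formulas \eqref{eq:sugawara} give $\bL^{0,\alpha}_n\ind=0$ for $n>0$ and $\bL^{0,\alpha}_0\ind=\Delta_\alpha\ind$, so the intertwining relation $\cP_\alpha\circ\bL^{0,\alpha}_n=\bL_n\circ\cP_\alpha$ yields $\bL_nv_\alpha=0$ ($n>0$) and $\bL_0v_\alpha=\Delta_\alpha v_\alpha$; and since $\cV_\alpha^0=\mathrm{span}\{\bL^{0,\alpha}_{-\nu}\ind\}$ by definition, the same relation gives $\cV_\alpha=\mathrm{ran}(\cP_\alpha\circ\Phi_\alpha^0)=\cP_\alpha(\cV_\alpha^0)=\mathrm{span}\{\bL_{-\nu}v_\alpha\}$ when $\Re(\alpha)<Q$, and this identity persists for all $\alpha$ by the analytic continuation of $\Phi_\alpha$ and the reflection formula. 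In particular $\cV_\alpha$ is a quotient of $\cV_\alpha^0$, it vanishes exactly when $v_\alpha=0$, and it carries central charge $c_\mathrm{L}$ and weight $\Delta_\alpha$; this proves the first assertion of the theorem.

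For $\Re(\alpha)\leq Q$ this is almost all of the work. When $\Re(\alpha)<Q$ the Feigin--Fuchs module $\cV_\alpha^0$ is irreducible: it equals the Verma module $M(c_\mathrm{L},\Delta_\alpha)$ for $\alpha\notin kac$ (a non-degenerate weight has irreducible Verma, as $c_\mathrm{L}>25$) and the irreducible quotient $V(c_\mathrm{L},\Delta_\alpha)$ for $\alpha\in kac^-$ (Lemma~\ref{lem:frenkel} and Section~\ref{subsec:ff_modules}). Hence $\cV_\alpha=\cP_\alpha(\cV_\alpha^0)$ is a quotient of an irreducible module by the submodule $\ker(\cP_\alpha|_{\cV_\alpha^0})$, so it is either $0$ or isomorphic to $\cV_\alpha^0$, and it is non-zero exactly when $v_\alpha\neq0$. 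The non-vanishing of $v_\alpha$ for $\Re(\alpha)<Q$ I would take from the leading $c\to-\infty$ asymptotics of the Poisson kernel established in \cite{GKRV20_bootstrap}: the coefficient of $e^{(\alpha-Q)c}$ in $\cP_\alpha(\ind)$ is analytic in $\alpha$ and normalised to $1$ for $\alpha$ near $-\infty$, hence is $\equiv1$ on the connected set $\{\Re(\alpha)<Q\}$; on the critical line $\Re(\alpha)=Q$ the $v_\alpha$ are the non-zero generalised eigenstates diagonalising $\bH$ \cite{GKRV20_bootstrap}, and one argues directly using $\cV_\alpha=\mathrm{span}\{\bL_{-\nu}v_\alpha\}$ and the irreducibility of $M(c_\mathrm{L},\Delta_\alpha)$. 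This settles the case $\alpha\in kac^-$ and the case $\alpha\notin kac$ with $\Re(\alpha)\leq Q$.

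For $\Re(\alpha)>Q$, set $\beta:=2Q-\alpha$, so $\Re(\beta)<Q$ and $\Delta_\beta=\Delta_\alpha$. The reflection formula $\Phi_{2Q-\alpha}=R(\alpha)\Phi_\alpha$ of \cite[Theorem~4.4]{BGKRV22} gives $v_\alpha=R(\beta)v_\beta$, hence $\cV_\alpha=\mathrm{span}\{\bL_{-\nu}v_\alpha\}$ equals $\cV_\beta$ when $R(\beta)\neq0$ and equals $\{0\}$ when $R(\beta)=0$ (using $v_\beta\neq0$ from the previous step). Together with that step applied to $\beta$, this gives $\cV_\alpha\cong M(c_\mathrm{L},\Delta_\alpha)$ when $\alpha\notin kac$ and $R(\beta)\neq0$, $\cV_\alpha\cong V(c_\mathrm{L},\Delta_\alpha)$ when $\alpha\in kac^+$ and $R(\beta)\neq0$, and $\cV_\alpha=0$ when $R(\beta)=0$. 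It remains to locate the zeros of $R$ on $\{\Re(\beta)<Q\}$: using the explicit expression for $R$ in \cite[(4.4)]{BGKRV22}---a nowhere-vanishing exponential prefactor times a ratio of Gamma factors---one checks that $R(\beta)=0$ precisely for $\beta\in kac^-\cap\big((Q-\tfrac2\gamma\N^*)\cup(Q-\tfrac\gamma2\N^*)\big)$, equivalently $R(2Q-\alpha)=0$ precisely for $\alpha\in kac^+\cap\big((Q+\tfrac2\gamma\N^*)\cup(Q+\tfrac\gamma2\N^*)\big)$; in particular $R$ does not vanish off $kac^-$, so the case $\alpha\notin kac$ with $\Re(\alpha)>Q$ is consistent with the first bullet. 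This completes the classification.

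The hard step is the last one, the zero/pole analysis of the reflection coefficient. The naive Gamma factors in $R$ would seem to produce zeros and poles along the entire progressions $Q\pm\tfrac2\gamma\N^*$ and $Q\pm\tfrac\gamma2\N^*$, and one must show that away from the Kac lattice they cancel. The no-pole part is forced by Theorem~\ref{thm:poisson}: a pole of $R(\beta)$ at a point where $v_\beta\neq0$ would make $v_{2Q-\beta}=R(\beta)v_\beta$ blow up, contradicting the analyticity of $\cP$; the absence of zeros off $kac^-$ is then read off from the explicit formula. So in the end $\cV_\alpha$ can collapse to $0$ only at the listed degenerate points of $kac^+$, and everything else is routine bookkeeping of highest-weight quotients once the non-vanishing of the primary $v_\alpha$ and the irreducibility of the free-field modules $\cV_\alpha^0$ are granted.
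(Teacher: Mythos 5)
Your proposal is correct and follows essentially the same route as the paper: granting Theorem~\ref{thm:poisson}, you reduce the second bullet to the irreducibility of the Feigin--Fuchs module $\cV_{\alpha_{r,s}}^0$ (Lemma~\ref{lem:frenkel}), the intertwining property of $\cP_\alpha$, and the non-vanishing of the primary, while the first and third bullets are delegated, as in the paper, to \cite[Theorems 4.4--4.5]{BGKRV22} and the zeros of the reflection coefficient. The only cosmetic difference is packaging: the paper phrases the key step as the two kernel inclusions $\ker\Phi_{\alpha_{r,s}}\subset\ker\Phi^0_{\alpha_{r,s}}$ (maximality) and $\ker\Phi^0_{\alpha_{r,s}}\subset\ker\Phi_{\alpha_{r,s}}$ (analyticity of $\cP$ applied to the singular vector), whereas you observe directly that $\cV_{\alpha_{r,s}}=\cP_{\alpha_{r,s}}(\cV^0_{\alpha_{r,s}})$ is a non-zero quotient of an irreducible module --- the same argument.
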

	
The relation between Theorem \ref{thm:singular} and Theorem \ref{thm:poisson} is explained in Section~\ref{subsec:proof_overview}, and both results will be proved in Section \ref{sec:poisson}. Only the second item of Theorem \ref{thm:singular} is the technical subject of this paper. Indeed, the first item was done in \cite[Theorem 4.5]{BGKRV22}, while the third corresponds to the reflection formula $\cV_{2Q-\alpha}=R(\alpha)\cV_\alpha$ stated above. Hence, $\cV_{2Q-\alpha}\simeq\cV_\alpha$ (as highest-weight representations) unless $R(\alpha)=0$, in which case $\cV_{2Q-\alpha}=0$. The zeros of $R(\alpha)$ occur on $(Q+\frac{\gamma}{2}\N^*)\cup(Q+\frac{2}{\gamma}\N^*)$ \cite[(4.4)]{BGKRV22}, explaining this range. In the sequel, we will focus on proving the second item.
	
	The relevance of this Theorem \ref{thm:singular} is two-fold. On the fundamental level, it gives a complete classification of the algebraic structure of Liouville CFT. On the practical level, we get non-trivial relations between Liouville eigenstates for $\alpha$ in the Kac table, which are precisely the relations that are exploited through the BPZ equations. In a highest-weight representation, a \emph{singular vector} is one which is both a descendant and a highest-weight vector. In particular, any non-zero singular vector generates a non-trivial proper submodule. Theorem \ref{thm:singular} implies that for each $\alpha_{r,s}=Q-\frac{\gamma}{2}r-\frac{2}{\gamma}s$, $\cV_{\alpha_{r,s}}$ does not have any nonzero singular vector. On the other hand, the corresponding Verma module does have singular vectors, so the canonical projection maps singular vectors to zero. With the notation from the previous section, this means that there exists $\chi\in\cF_{rs}$ such that $\Phi_{\alpha_{r,s}}(\chi)=0$. Thus, there is a non-trivial linear combination 
	\begin{equation}\label{eq:singular_operator} 
		\bS_{\alpha_{r,s}}=\sum_{|\nu|=rs}\sigma_\nu^{r,s}\bL_{-\nu}\qquad\text{such that}\qquad\bS_{\alpha_{r,s}}\Psi_{\alpha_{r,s}}=0.
	\end{equation}
	Moreover, the coefficients $(\sigma_\nu^{r,s})_{|\nu|=rs}$ are universal in the sense that they only depend on $c_\mathrm{L}$ and $\alpha_{r,s}$. The singular vector at a given level is unique up to constant, and this constant can be fixed such that $\sigma^{r,s}_{(1,1,...,1)}=1$ \cite[Theorem 3.1]{Astashkevich97}. At level two, an explicit computation can be made, see Example~\ref{ex:level_two}.
	
	We stress that it would be quite difficult to prove such relations by analytic means, even for the level 2 above case, which was proved in \cite{KRV19_local}. In principle, one can find explicit formulae for the descendants (see the proof of Theorem \ref{thm:poisson}), but the combinatorics soon become intractable. Instead, Theorem \ref{thm:singular} gives us all the relations at once for all $r,s\in\N^*$.

	\subsection{Proof outline}\label{subsec:proof_overview}
	The proofs of Theorems \ref{thm:poisson} and \ref{thm:singular} use a combination of probabilistic and algebraic arguments which we summarise here. First, we stress that the two results are intimately related. Indeed, from the existence of a Virasoro representation on the Liouville side \cite{BGKRV22}, it is fairly easy to see that $\ker\Phi_{\alpha_{r,s}}$ is a Virasoro submodule for all $r,s\in\N^*$, so that $\cV_{\alpha_{r,s}}$ is a highest-weight representation as a quotient by a submodule. This immediately implies $\ker\Phi_{\alpha_{r,s}}\subset\ker\Phi_{\alpha_{r,s}}^0$ since the latter is the maximal proper submodule (\cite{Frenkel92_determinant} and Section \ref{subsec:proof_singular}). The converse inclusion is equivalent to Theorem \ref{thm:poisson}. 
	
	Now, we explain the main steps of the proof of Theorem \ref{thm:poisson}.
	\begin{enumerate}[label={\arabic*.}]
		\item Using Gaussian integration by parts/Girsanov's theorem, we exhibit a probabilistic formula for the descendant states, valid for $\alpha$ in a neighbourhood of $-\infty$ (Lemma \ref{lem:expression_descendants}). This formula is essentially a linear combination of integrals of the form $\cI_{r,\bolds}(\alpha)$ as defined in \eqref{eq:def_I}. These are disc amplitudes with an $\alpha$-insertion at 0 and several $\gamma$-insertions integrated over the unit disc. This reduces the question to the analytic continuation of these integrals, and their study on the Kac table.
		\item The integrals have singularities at zero, which is why they converge absolutely only for small enough $\alpha$. Using the fusion estimate of Proposition \ref{prop:regular_rectangle}, we get a sharp bound for the region of absolute convergence and analyticity in $\alpha$ of these integrals. 
		\item \label{item:mero} We find two induction formulae \eqref{eq:init_rec},\eqref{eq:rec}, expressing a singular integral in terms of simpler ones. This allows us to study the pole structure of the singular integrals recursively. As a byproduct, the induction formulae highlight the special role played by the Kac table.
		\item \label{item:four} From the first three inputs, we deduce that for each $r,s\in\N^*$, the Poisson operator is analytic at $\alpha_{r,s}$ and levels $\leq rs$, \emph{provided} that $\gamma^2\not\in\Q$ (Corollary \ref{cor:level_rs}). 
		\item\label{item:five} Using the Virasoro representation constructed in \cite{BGKRV22}, the analyticity propagates to all levels (Proposition \ref{prop:regular_descendants}). This implies that the Poisson operator $\alpha\mapsto\cP_\alpha(\chi)$ is analytic for \emph{all} $\alpha$ and \emph{all} $\chi\in\cF$, \emph{provided} $\gamma^2\not\in\Q$.
		\item\label{item:six} We exploit a continuity property in $\gamma$ (Proposition \ref{prop:gamma_continuous}) to extend the result from $\gamma^2\not\in\Q$ to arbitrary $\gamma\in(0,2)$. This is done in Section~\ref{subsec:proof_singular}. 
	\end{enumerate} 
	
The objects encountered in the proof bear some similarities with the ones in \cite{Oikarinen19_smooth}, which proves smoothness of Liouville correlation functions in the location of the insertions (what corresponds to our $w_j$'s below). Our goal, on the other hand, is to study the analytic continuation of the eigenstates in~$\alpha$. The construction of the \emph{meromorphic} continuation (Item \ref{item:mero}, Section~\ref{subsec:mero}) requires the careful study of singular integrals having the same flavour as in \cite{Oikarinen19_smooth}, and uses quite heavily the symmetrisation trick and the (first) derivative formula for correlation functions, already present in \cite{KRV19_local,Oikarinen19_smooth}. The crux (and main novelty) of our proof is to discard the possibility of poles, which require completely different techniques (as summarised in Items \ref{item:four}-\ref{item:six} above).

\subsection*{Acknowledgements}
	We thank Joona Oikarinen for discussions on the smoothness of Liouville correlation functions. We are also grateful to Colin Guillarmou and R\'emi Rhodes helpful discussions. We thank the anonymous referees for their thorough review which drastically improved the manuscript. G.B. is supported by ANR-21-CE40-0003 ``Confica". B.W. \ is supported by National Key R\&D Program of China (No. 2023YFA1010700).
	
	\section{Background}\label{sec:background}

 \subsection{Integer partitions and Young diagrams}\label{subsec:partitions}
Let $N\in\N$. A \emph{partition of $N$} is a way of writing $N$ as a sum of positive integers, where two sums are identified if they differ only by the order of their summands (for instance, $3+1$ and $1+3$ define the same partition of $4$). We denote by $\cT_N$ the set of partitions of $N$, and by $p(N)$ its cardinality. By convention, $p(0)=0$. We also set $\cT:=\cup_{n\in\N}\cT_N$ the set of all integer partitions.

Choosing to order the summands in non-decreasing order, a partition of $N$ is specified uniquely by a finite, non-decreasing sequence of positive integers $\nu=(\nu_1,...,\nu_\ell)$ such that $\sum_{j=1}^\ell\nu_j=N$. Such a $\nu$ is called a \emph{Young diagram} of \emph{length $\ell$} and \emph{level $N$}. Equivalently, a partition of $N$ is specified uniquely by the number of times $k_n$ that the integer $n$ appears in the partition. Hence, we can also represent the partition of an integer by a sequence $\mathbf{k}=(k_n)\in\N^{\N^*}$ with finitely many non-zero terms. The level of the corresponding partition $\sum_{n=1}^\infty nk_n$, and its length is $\sum_{n=1}^\infty k_n$. We will navigate freely between these two ways of parametrising integer partitions, with the model chosen being clear from the notation. We note that these two ways already appeared in Section \ref{subsec:results} when defining $\varphi^\mathbf{k}$ and $\bL_{-\nu}^{0,\alpha}$.

One of the key insights of our work is to relate the parameters $(r,s)$ of the Kac table \eqref{eq:kac} to the singular integrals defined in \eqref{eq:def_I}, which are also parametrised by integer partitions. To make the connection with the Kac table notationally explicit, the integrals $\cI_{r,\bolds}$ are parametrised by integer partitions $(r,\bolds)$, where $r$ is the length of the partition, and $\bolds=(s_1,...,s_r)$ is a Young diagram. These partitions will be used to parametrise singularities at the origin of the complex plane, namely to $(r,\bolds)$ we associate the meromorphic function of $r$ variables $(w_1,...,w_r)\mapsto\prod_{j=1}^rw_j^{-s_j}$ on $\D^r$ (see \eqref{eq:def_I}). It is pictorially convenient to represent a Young diagram in the ``English notation" as in Figure \ref{fig:elementary_move}, where the $j^{\text{th}}$ column represents a singularity $w_j^{-s_j}$.

Given $r,s\in\N^*$, we use the shorthand notation $(r,(s))$ for the ``rectangular" partition $(r,(s,s,...,s))$ of length $r$. Given a partition $(r,\bolds)$, we define the number $r^*$ (depending on $(r,\bolds)$) by 
	\[r^*:=\max\{j\in\{1,...,r\}|\,s_j=s_1\}.\]
We will keep the dependence on the partition implicit, creating no confusion in the future.
 
 
	 Now, we define the $j^{\text{th}}$ \emph{elementary move} $\tau_j:\cT_N\to\cT_N$, which consists informally in taking a box from the $j^{\text{th}}$ column and move it to the first available column to the right, in such a way that we still get a partition. Formally, the algorithm is as follows: for all $(r,\bolds)\in\cT_N$ and $j\leq r$,
	 \begin{itemize}
	 	\item If $s_j=1$, $\tau_j(r,\bolds)=(r,\bolds)$.
	 	\item If $s_j\geq2$, let $j^*:=\max\{l|\,s_l=s_j\}$. 
	 	\begin{itemize}
	 		\item If $s_j\in\{s_r,s_r+1\}$, then $\tau_j(r,\bolds)$ is the partition $(r+1,\tilde{\bolds})$, with $\tilde{s}_{j^*}=s_{j^*}-1$, $\tilde{s}_{r+1}=1$, and $\tilde{s}_l=s_l$ for $l\not\in\{j^*,r+1\}$.
	 		\item If $s_j\geq s_r+2$, let $k:=\min\{l|\,s_l\leq s_j-2\}$. Then $\tau_j(r,\bolds)$ is the partition $(r,\tilde{\bolds})$ with $\tilde{s}_{j^*}=s_j-1$, $\tilde{s}_k=s_k+1$, and $\tilde{s}_l=s_l$ for $l\not\in\{k,j^*\}$.
	 	\end{itemize}
	 \end{itemize}

	\begin{figure}[h!]
		\centering
		\includegraphics[scale=0.7]{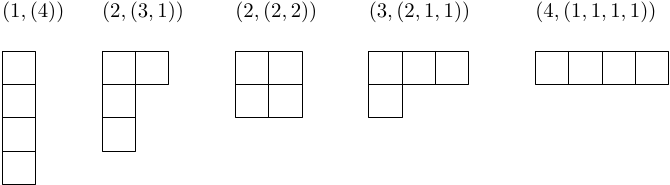}
		\caption{\label{fig:elementary_move} From left to right, a sequence of elementary moves in $\cT_4$.}
	\end{figure}

 We define a partial order $\preceq$ on $\cT_N$ by declaring $(\tilde{r},\tilde{\bolds})\preceq(r,\bolds)$ if $(\tilde{r},\tilde{\bolds})$ can be obtained from $(r,\bolds)$ by a (possibly empty) sequence of elementary moves. The associated strict order is denoted by $\prec$. It is elementary to check that the unique minimal (resp. maximal) partition in $\cT_N$ is $(N,(1))$ (resp. $(1,(N))$), and for each $(r,\bolds)\in\cT_N$, there exists a sequence of elementary moves from $(r,\bolds)$ to $(1,(N))$. The distance of a partition $(r,\bolds)$ to $(N,(1))$ is the minimal number of elementary moves required to reach $(N,(1))$ from $(r,\bolds)$, and it is a finite number. By definition, an elementary move decreases strictly the distance to the minimal partition (unless the partition is already the minimal one). By an induction on $\cT_N$, we mean a proof by induction on the distance to the minimal partition.
	
	\subsection{Liouville CFT}\label{subsec:hilbert_space}
	
This section gives some details on the material introduced in Section \ref{subsubsec:poisson}, i.e. we review the Liouville CFT, its Hilbert space and Hamiltonian. Our review is based on \cite[Sections 3-5]{GKRV20_bootstrap}.

		\subsubsection{Gaussian fields and Liouville Hilbert space}
	Let $\P_{\S^1}$ be the law of the centred, Gaussian, $\log$-correlated field on the circle, i.e. samples from $\P_{\S^1}$ are Gaussian fields $\varphi$ on the circle with covariance 
	\[\E[\varphi(e^{i\theta})\varphi(e^{i\theta'})]=\log\frac{1}{|e^{i\theta
		}-e^{i\theta'}|},\qquad\forall\theta,\theta'\in\R.\]
	With $\P_{\S^1}$-probability 1, $\varphi\in H^{-s}(\S^1)$ for all $s>0$. In Fourier modes, one has
	\[\varphi(e^{i\theta})=\sum_{n\in\Z\setminus\{0\}}\varphi_ne^{ni\theta},\]
	where $\varphi_{-n}=\bar{\varphi}_n$, and $(\varphi_n)_{n\geq1}$ are independent complex Gaussians $\cN_\C(0,\frac{1}{2n})$. The harmonic extension $P\varphi\in C^\infty(\D)$ is 
	\[P\varphi(z)=\sum_{n=1}^\infty\varphi_nz^n+\varphi_{-n}\bar{z}^n,\qquad\forall z\in\D.\] 
	Its covariance structure is
	\begin{equation}\label{eq:covar_harmonic}
		\E[P\varphi(z)P\varphi(w)]=\log\frac{1}{|1-z\bar{w}|}=:G_\del(z,w).
	\end{equation}
	In particular, we have $\E[P\varphi(z)^2]=\log\frac{1}{1-|z|^2}$. The \emph{Liouville Hilbert space} is 
	\[\cH:=L^2(\d c\otimes\P_{\S^1}),\]
	where $\d c$ is the Lebesgue measure on $\R$. Samples from the (infinite) measure $\d c\otimes\P_{\S^1}$ are written $c+\varphi$. Following \cite[Section 4.1]{GKRV20_bootstrap}, we define a dense subspace $\cC$ of $L^2(\P_{\S^1})$ as follows. A functional $F\in C^0(H^{-1}(\S^1))$ belongs to $\cC$ if there exists $N\in\N$ and $f\in C^\infty(\C^N)$ such that $F(\varphi)=f(\varphi_1,...,\varphi_N)$ for all $\varphi\in H^{-1}(\S^1)$, and $f$ and all its derivatives have at most exponential growth at $\infty$. The space $\cC$ comes with a natural Fr\'echet topology, and we denote by $\cC'$ its topological dual, i.e. $\cC'$ is the space of continuous linear forms on $\cC$. We will refer to $\cC$ (resp. $\cC'$) as the space of \emph{test functions} (resp. \emph{distributions}).
	
	We will frequently introduce some extra randomness in the form of an independent Dirichlet free field in $\D$. This is a centred Gaussian field $X_\D$ with covariance structure
	\begin{equation}\label{eq:covar_dirichlet}
		\E[X_\D(z)X_\D(w)]=\log\left|\frac{1-z\bar{w}}{z-w}\right|=:G_\D(z,w).
	\end{equation}
	We will always denote
	\[X:=X_\D+P\varphi,\]
	which is a $\log$-correlated Gaussian field in $\D$:
	\begin{equation}\label{eq:covar_log}
		\E[X(z)X(w)]=\log\frac{1}{|z-w|}=:G(z,w).
	\end{equation}
	
	Let $\rho\in C^\infty(\C)$ be a radially symmetric bump function, with total integral~$1$. For $\epsilon>0$, we write $\rho_\epsilon(z):=\epsilon^{-2}\rho(\frac{z}{\epsilon})$, and $X_\epsilon:=X\star\rho_\epsilon$. The \emph{Gaussian multiplicative chaos} (GMC) measure of $X$ with parameter $\gamma\in(0,2)$ is the following weak limit of measures
	\[\d M_\gamma(z):=\underset{\epsilon\to0}\lim\,\epsilon^\frac{\gamma^2}{2}e^{\gamma X_\epsilon(z)}|\d z|^2,\]
	where the convergence holds in probability. This limit exists, is non-trivial, and is independent of the choice of regularisation. GMC was introduced pioneered by Kahane in the context of turbulence \cite{Kahane85}, and we refer to \cite{rhodes2014_gmcReview} for an account of recent developments. In the form just stated, \cite{Berestycki17} gives a concise and comprehensive introduction to the topic (see in particular \cite[Theorem 1.1]{Berestycki17} for the existence and uniqueness of $M_\gamma$).
	
	\subsubsection{Semigroups}\label{subsec:semigroups}
	One can define two strongly continuous, contraction semigroups on $\cH$, with (unbounded) generators $\bH^0$, $\bH$ \cite{GKRV20_bootstrap}. These generators are positive, essentially self-adjoint, unbounded operators on $\cH$. 
	
	According to \cite[Proposition 4.4]{GKRV20_bootstrap}, the operator $\bH^0$ from \eqref{eq:hamiltonian_ff} generates a semigroup $(e^{-t\bH^0})_{t\geq0}$ with the expression
	\[e^{-t\bH^0}F(c+\varphi):=e^{-\frac{Q^2}{2}t}\E_\varphi\left[F(c+B_t+\varphi_t)\right],\qquad\forall F\in\cH,\]
	where $B_t=\int_0^{2\pi}X(e^{-t+i\theta})\frac{\d\theta}{2\pi}$, and $\varphi_t(e^{i\theta})=X(e^{-t+i\theta})-B_t$. The notation $\E_\varphi$ means conditionally on $\varphi$ (i.e. we integrate over the Dirichlet free field $X_\D$), and $(B_t)_{t\geq0}$ evolves as a standard Brownian motion. Moreover, the Fourier modes of $\varphi_t$ evolve as independent Ornstein-Uhlenbeck processes. See \cite[Section 4.3]{GKRV20_bootstrap} for details on this semigroup.
	
	Similarly, the Liouville Hamiltonian $\bH$ from \eqref{eq:liouville_hamiltonian} exponentiates to a contraction semigroup $(e^{-t\bH})_{t\geq0}$ on $\cH$, as we now review. First, the potential $V$ is defined in $\cC'$ by the formula
\begin{equation}\label{eq:potential}
\E[V(\varphi)F(\varphi)]=\int_0^{2\pi}\E[F(\varphi-\gamma\log|e^{i\theta}-\cdot|)]\d\theta,\qquad\forall F\in\cC,
\end{equation}
which, by the Cameron--Martin formula, makes rigorous sense of the formal expression $V(\varphi)=\int_0^{2\pi}e^{\gamma\varphi(e^{i\theta})-\frac{\gamma^2}{2}\E[\varphi(e^{i\theta})^2]}\d\theta$. This defines $\bH=\bH^0+\mu e^{\gamma c}V$ as a positive, self-adjoint (unbounded) operator on $\cH$. It is shown in \cite[Section 5]{GKRV20_bootstrap} that this operator exponentiates to a contraction semigroup with the expression \cite[(5.3)]{GKRV20_bootstrap}
	\begin{equation}\label{eq:liouville_semigroup}
		e^{-t\bH}F(c+\varphi)=e^{-\frac{Q^2}{2}t}\E_\varphi\left[F(c+B_t+\varphi_t)e^{-\mu e^{\gamma c}\int_{\A_t}\frac{\d M_\gamma(z)}{|z|^{\gamma Q}}}\right],\qquad\forall F\in\cH,
	\end{equation}
	where $\A_t:=\{z\in\C|\,e^{-t}<|z|<1\}$. As explained in \cite[Section 5.1]{GKRV20_bootstrap}, this expression can be understood as a Feynman--Kac formula for the Schr\"odinger operator $\bH=\bH^0+\mu e^{\gamma c}V$, but this formula is far from obvious due to the irregular nature of the potential. The identification of $\bH$ with the generator of the semigroup \eqref{eq:liouville_semigroup} is a deep question exploiting many subtle properties of GMC \cite[Section 5]{GKRV20_bootstrap} (see in particular Proposition 5.5 there). The strategy adopted in \cite{GKRV20_bootstrap} is to first construct a quadratic form $\cQ$ with dense domain $\cD(\cQ)\subset\cH$, and study its properties \cite[Lemma 5.4]{GKRV20_bootstrap}. Then, $\bH$ is constructed as the Friedrichs extension of $\cQ$ \cite[Proposition 5.5]{GKRV20_bootstrap}. With these definitions, $\bH$ defines a bounded operator $\bH:\cD(\cQ)\to\cD'(\cQ)$, where $\cD'(\cQ)$ is the continuous dual of $\cD(\cQ)$ \cite{GKRV20_bootstrap} (see the last displayed equation from Section 5 there). Of course, the same applies to weighted space, i.e. $\bH:e^{-\beta c}\cD(\cQ)\to e^{-\beta c}\cD'(\cQ)$ defines a bounded operator for all $\beta\in\R$. The weighted spaces $e^{-\beta c}\cD(\cQ)$ are natural spaces in which the generalised eigenstates of $\bH$ are defined \cite[Theorem 4.5]{BGKRV22}.
	
	\subsection{Feigin--Fuchs modules}\label{subsec:ff_modules}
	
In this section, we give a detailed background on the Feigin--Fuchs modules (a.k.a. the Sugawara construction) introduced in Section \ref{subsec:singular}. These are standard constructions from representation theory, the structure of which is well known since the work of Frenkel \cite{Frenkel92_determinant}. We will follow the presentation (and notation) of \cite[Section 4.4]{GKRV20_bootstrap}. Another standard reference is \cite[Sections 2.3 and 3.4]{KacRaina_Bombay}, where it is called the ``oscillatory representation". We also use some notation and terminology from Appendix \ref{app:virasoro}, see in particular the notion of highest-weight representation and Verma module (denoted $M(c_\mathrm{L},\Delta_\alpha)$) introduced there. 
	
	Let $\alpha\in\C$ be arbitrary. We have a representation of the Heisenberg algebra $(\bA_n^\alpha)_{n\in\Z}$ as unbounded operators on $L^2(\P_{\S^1})$, given for $n>0$ by
	\[\bA_n^\alpha=\frac{i}{2}\del_n;\qquad\bA_{-n}^\alpha=\frac{i}{2}(\del_{-n}-2n\varphi_n);\qquad\bA_0^\alpha=\frac{i}{2}\alpha,\]
where we recall that $\del_n=\del_{\varphi_n}$ and $\del_{-n}=\del_{\bar\varphi_n}$ denote the complex derivatives in the variable $\varphi_n$. For $n\neq0$, we will simply write $\bA_n=\bA_n^\alpha$. We recall the main properties of these operators and refer to \cite[Section 4.4.1]{GKRV20_bootstrap} for details. These operators preserve $\cC$ and satisfy the commutation relations $[\bA_n^\alpha,\bA_m^\alpha]=\frac{n}{2}\delta_{n,-m}$ there. For $n\neq0$, we have the hermiticity relations $\bA_n^*=\bA_{-n}$ on $L^2(\P_{\S^1})$. Given a partition $\mathbf{k}$, we set $\bA_{-\mathbf{k}}:=\prod_{n=1}^\infty\bA_{-n}^{k_n}$. Note that the operators $\bA_{-n}$ all commute for $n>0$, so the order in which we do this product is irrelevant. 
	
	We call the constant function $\ind\in L^2(\P_{\S^1})$ the \emph{vacuum vector}. The Heisenberg representation gives the space $\cF=\C[(\varphi_n)_{n\geq1}]$ a structure of highest-weight Heisenberg module, i.e. 
	\[\cF=\mathrm{span}\,\{\pi_\mathbf{k}:=\bA_{-\mathbf{k}}\ind|\,\mathbf{k}\in\cT\}.\]
	The level gives a grading 
	\[\cF=\oplus_{N\in\N}\cF_N,\]
	and we have $\dim\cF_N=p(N)$. We will say that an element of $\cF_N$ is a polynomial of level $N$.
	
	Recall the Sugawara construction $(\bL_n^{0,\alpha})_{n\in\Z}$ from \eqref{eq:sugawara}. These operators preserve $\cC$ and are closable on $L^2(\P_{\S^1})$ \cite[Section 4.4]{GKRV20_bootstrap}. On $\cC$, they satisfy the commutation relations of the Virasoro algebra:
	\[[\bL_n^{0,\alpha},\bL_m^{0,\alpha}]=(n-m)\bL_{n+m}^{0,\alpha}+\frac{c_\mathrm{L}}{12}(n^3-n)\delta_{n,-m}.\]
	We also have the hermiticity relations $(\bL_n^{0,\alpha})^*=\bL_{-n}^{0,2Q-\bar{\alpha}}$ on $L^2(\P_{\S^1})$. As a module over the Virasoro algebra, $(\cF,(\bL_n^{0,\alpha})_{n\in\Z})$ is called a \emph{Feigin--Fuchs module}.
	

	Given a partition $\nu=(\nu_1,...,\nu_\ell)$, we set $\bL_{-\nu}^{0,\alpha}=\bL_{-\nu_\ell}^{0,\alpha}...\bL_{-\nu_1}^{0,\alpha}$. The \emph{descendant state}
	\[\cQ_{\alpha,\nu}:=\bL_{-\nu}^{0,\alpha}\ind\in\cF\]
	is a polynomial of level $|\nu|$. A straightforward computation using \eqref{eq:sugawara} shows that $\bL_n^{0,\alpha}\ind=0$ for all $n\geq1$, and $\bL_0^{0,\alpha}\ind=\Delta_\alpha\ind$, i.e. the constant function is a highest-weight state. By definition, this means that the space
	\[\cV_\alpha^0:=\mathrm{span}\{\cQ_{\alpha,\nu}|\nu\in\cT\}\]
is a highest-weight representation (with central charge $c_\mathrm{L}$ and weight $\Delta_\alpha$). We also define $\cV_\alpha^{0,N}:=\cV_\alpha^0\cap\cF_N$. For $\alpha\not\in kac$, the Verma module $M(c_\mathrm{L},\Delta_\alpha)$ is irreducible, so we have $\cV_\alpha^0\simeq M(c_\mathrm{L},\Delta_\alpha)$ as highest-weight representations. In particular, all the states $\cQ_{\alpha,\nu}$ are linearly independent in this case. Hence $\dim\cV_\alpha^{0,N}=p(N)=\dim \cF_N$ for all $N\in\N$, and we have the linear isomorphism $\cV_\alpha^0\simeq\cF$.
	
	From now on, we assume $\Re(\alpha)<Q$, so that $2Q-\Re(\alpha)>Q$. By \cite[Theorem 1]{Frenkel92_determinant}, the module $\cV_{2Q-\alpha}^0$ is Verma (even if $2Q-\alpha\in kac^+$), namely we have as above $\cV_{2Q-\alpha}^0\simeq M(c_\mathrm{L},\Delta_\alpha)$ as highest-weight representations (recall $\Delta_\alpha=\Delta_{2Q-\alpha}$), and $\cV_{2Q-\alpha}^0=\cF$ as vector spaces. Hence, the canonical projection from the Verma module onto $\cV_\alpha^0$ can be represented as the linear map $\Phi_\alpha^0:\cV_{2Q-\alpha}^0\to\cV_\alpha^0$ such that for all partitions $\nu$,
	\begin{equation}\label{eq:def_phi}
		\Phi_\alpha^0(\bL_{-\nu}^{0,2Q-\alpha}\ind)=\bL_{-\nu}^{0,\alpha}\ind,
	\end{equation}
	i.e. $\Phi_\alpha^0(\cQ_{2Q-\alpha,\nu})=\cQ_{\alpha,\nu}$. The map $\Phi_\alpha^0$ is the canonical linear surjection from the Verma module $\cV_{2Q-\alpha}^0\simeq\cF$ to the highest-weight module $\cV_\alpha^0$. It preserves the level and we denote by $\Phi_\alpha^{0,N}:=\Phi_\alpha^0|_{\cF_N}$ its restriction to level $N$. Moreover, $\ker(\Phi_\alpha^0)$ is a submodule of $\cV_{2Q-\alpha}^0$ and we have a canonical isomorphism of highest-weight Virasoro representations
	\[\cV_\alpha^0=\mathrm{ran}(\Phi_\alpha^0)\simeq\cV_{2Q-\alpha}^0/\ker(\Phi_\alpha^0).\] 
	Again by \cite{Frenkel92_determinant}, the determinant of $\Phi_\alpha^{0,N}$ is given by the celebrated Kac formula:
	\[\det(\Phi_\alpha^{0,N})=\prod_{1\leq rs\leq N}(\Delta_\alpha-\Delta_{\alpha_{r,s}})^{p(N-rs)}.\]
	This implies that $\ker(\Phi_\alpha^0)=\{0\}$ if and only if $\alpha$ does not belong to the Kac table. In particular, $\ker(\Phi_{\alpha_{r,s}}^{0,rs})$ is non-trivial. 
	
	In fact, we claim the stronger statement that $\cV_{\alpha_{r,s}}^0$ is irreducible. This is probably a well-known fact, but we were unable to locate a reference, so we include a short proof. By \cite[Proposition 3.6]{KacRaina_Bombay} (see also Appendix \ref{app:virasoro}), it suffices to show that all singular vectors vanish are multiples of the constant function. Let $\chi\in\cV_{\alpha_{r,s}}^0$ be a singular vector at (strictly) positive level. By definition, we have $\bL_\nu^{0,\alpha_{r,s}}\chi=0$ for all non-empty $\nu\in\cT$. Thus, by the hermiticity relations, $\langle\chi,\cQ_{2Q-\alpha_{r,s},\nu}\rangle_{L^2(\P_{\S^1})}=0$ for all $\nu\in\cT$, i.e. $\chi\in(\cV_{2Q-\alpha_{r,s}}^0)^\perp$. But we have seen that $\cV_{2Q-\alpha_{r,s}}^0=\cF$, so $\chi=0$.
	
	Let us summarise our findings.
	
	\begin{lemma}\label{lem:frenkel}
		Let $c_\mathrm{L}>25$ and $\alpha\in\C$.
		\begin{itemize}
			\item If $\alpha\not\in kac^-$, $\cV_\alpha^0$ is isomorphic to the Verma module $M(c_\mathrm{L},\Delta_\alpha)$.
			\item If $\alpha\in kac^-$, $\cV_\alpha^0$ is isomorphic to the irreducible quotient of the Verma module $\cV_{2Q-\alpha}^0$ by the maximal proper submodule $\ker(\Phi_\alpha^0)$.
		\end{itemize}
	\end{lemma}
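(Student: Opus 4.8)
The plan is to split into three cases according to the position of $\alpha$ and assemble the facts recalled above; once those are in hand the argument is essentially bookkeeping, the one substantive ingredient being the irreducibility of $\cV_{\alpha_{r,s}}^0$ established earlier.

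First, if $\alpha\notin kac$, then the Verma module $M(c_\mathrm{L},\Delta_\alpha)$ is irreducible (its Kac determinants do not vanish, since $\Delta_\alpha\neq\Delta_{\alpha_{r,s}}$ for all $r,s$), and since $\cV_\alpha^0$ is a non-zero highest-weight representation of charge $c_\mathrm{L}$ and weight $\Delta_\alpha$, the universal property of $M(c_\mathrm{L},\Delta_\alpha)$ gives a surjection onto $\cV_\alpha^0$, which must then be an isomorphism. If instead $\alpha\in kac^+$, then $\Re(\alpha)\geq 2Q>Q$, and Frenkel's theorem \cite[Theorem 1]{Frenkel92_determinant}---in the same one-sided form already used above for $\cV_{2Q-\alpha}^0$---says that $\cV_\alpha^0$ is a full Verma module, so again $\cV_\alpha^0\simeq M(c_\mathrm{L},\Delta_\alpha)$. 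Since $\C\setminus kac^-=(\C\setminus kac)\sqcup kac^+$, these two sub-cases give the first bullet.

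It remains to treat $\alpha\in kac^-$, so $\alpha\leq 0<Q$. By Frenkel's theorem the reflected module $\cV_{2Q-\alpha}^0$ is a full Verma module isomorphic to $M(c_\mathrm{L},\Delta_\alpha)$, so the canonical surjection $\Phi_\alpha^0\colon\cV_{2Q-\alpha}^0\to\cV_\alpha^0$ identifies $\cV_\alpha^0\simeq\cV_{2Q-\alpha}^0/\ker\Phi_\alpha^0$ as highest-weight representations. The submodule $\ker\Phi_\alpha^0$ is non-zero by the Kac determinant formula (the factor $\Delta_\alpha-\Delta_{\alpha_{r,s}}$ vanishes at $\alpha=\alpha_{r,s}$) and proper because $\Phi_\alpha^0(\ind)=\ind\neq 0$. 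Since a submodule with irreducible quotient is automatically a maximal proper submodule, the irreducibility of $\cV_\alpha^0=\cV_{2Q-\alpha}^0/\ker\Phi_\alpha^0$ shows that $\ker\Phi_\alpha^0$ is the maximal proper submodule of the Verma module $\cV_{2Q-\alpha}^0$, whence $\cV_\alpha^0$ is its irreducible quotient $V(c_\mathrm{L},\Delta_\alpha)$. This is the second bullet.

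The step I would expect to require the most care is the irreducibility of $\cV_{\alpha_{r,s}}^0$ invoked above, which is where the probabilistic structure enters and where the hypothesis $c_\mathrm{L}>25$ (equivalently $\gamma\in(0,2)$) is essential, as it places $2Q-\alpha_{r,s}$ strictly to the right of $Q$. The mechanism: the hermiticity relation $(\bL_n^{0,\alpha})^*=\bL_{-n}^{0,2Q-\bar\alpha}$ on $L^2(\P_{\S^1})$ pairs $\cV_{\alpha_{r,s}}^0$ against $\cV_{2Q-\alpha_{r,s}}^0$; Frenkel's one-sided theorem makes the latter equal to all of $\cF$, so any positive-level singular vector of $\cV_{\alpha_{r,s}}^0$ is $L^2(\P_{\S^1})$-orthogonal to the spanning family $\{\cQ_{2Q-\alpha_{r,s},\nu}\}_{\nu\in\cT}$ of $\cF$, hence vanishes, and \cite[Proposition 3.6]{KacRaina_Bombay} then yields irreducibility.
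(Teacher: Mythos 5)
Your proof is correct and follows essentially the same route as the paper: the non-Kac case via irreducibility of the Verma module, the $kac^+$ case via Frenkel's one-sided theorem, and the $kac^-$ case by identifying $\cV_\alpha^0$ with $\cV_{2Q-\alpha}^0/\ker\Phi_\alpha^0$ and proving irreducibility by pairing a putative positive-level singular vector against the full Verma module $\cV_{2Q-\alpha}^0=\cF$ through the hermiticity relation $(\bL_n^{0,\alpha})^*=\bL_{-n}^{0,2Q-\bar\alpha}$. You correctly single out the irreducibility of $\cV_{\alpha_{r,s}}^0$ as the one substantive step, and your mechanism for it is exactly the paper's.
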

	
	 Finally, we show that the map $\alpha\mapsto\Phi_\alpha^{0,N}\in\mathrm{End}(\cF_N)$ is analytic in the region $\Re(\alpha)<Q$ for all levels $N\in\N$. Since the Sugawara expression is polynomial in $\alpha$, there exist polynomial (hence analytic) coefficients $\alpha\mapsto m_{\nu,\mathbf{k}}(\alpha)$ such that
	\[\cQ_{2Q-\alpha,\nu}=\sum_{|\mathbf{k}|=|\nu|}m_{\nu,\mathbf{k}}(2Q-\alpha)\pi_\mathbf{k}.\]
	For $\Re(\alpha)<Q$, $\cV_{2Q-\alpha}^0$ is a Verma module (Lemma \ref{lem:frenkel}), so the states $(\cQ_{2Q-\alpha,\nu})_{|\nu|=N}$ are linearly independent in $\cF_N$ for all $N\in\N$. Since $\dim(\cF_N)=p(N)$, we deduce that these states form a linear basis of $\cF_N$, so that the matrices $m^N(2Q-\alpha):=(m_{\nu,\mathbf{k}}(2Q-\alpha))_{|\nu|=|\mathbf{k}|=N}$ are invertible. Thus, we have an analytic function $\alpha\mapsto m^N(2Q-\alpha)\in\mathrm{GL}_{p(N)}(\C)$ defined in the region $\Re(\alpha)<Q$. Now, the inversion map is analytic on $\mathrm{GL}_{p(N)}(\C)$, so the map $\alpha\mapsto M^N(2Q-\alpha):=m^N(2Q-\alpha)^{-1}$ is well-defined and analytic in the region $\Re(\alpha)<Q$. Hence, for all $\mathbf{k}\in\cT$, we have $\pi_\mathbf{k}=\sum_{|\nu|=|\mathbf{k}|}M_{\mathbf{k},\nu}(2Q-\alpha)\cQ_{2Q-\alpha,\nu}$, for some coefficients $\alpha\mapsto M_{\mathbf{k},\nu}(2Q-\alpha)$ analytic in the region $\Re(\alpha)<Q$. Recalling that $\Phi_\alpha^0(\cQ_{\alpha,\nu})=\cQ_{2Q-\alpha,\nu}$, we have just shown the existence of analytic coefficients $M_{\mathbf{k},\nu}^N(2Q-\alpha)$ in the region $\Re(\alpha)<Q$ such that for all integer partitions $\mathbf{k}$:
	\[\Phi_\alpha^0(\pi_\mathbf{k})=\sum_{|\nu|=|\mathbf{k}|}M_{\mathbf{k},\nu}(2Q-\alpha)\cQ_{\alpha,\nu}.\]
	We stress that the last formula does not extend analytically to the region $\Re(\alpha)>Q$, precisely because some matrices $m^N(\alpha)$ are not invertible on the Kac table, so their inverse $M^N(\alpha)$ will have a pole there.
	
	\begin{example}\label{ex:level_two}
		Frenkel's result implies that $\cV_{2Q-{\alpha_{r,s}}}^0$ is Verma but $\cV_{\alpha_{r,s}}^0$ is not \cite{Frenkel92_determinant}. Thus, the singular vector in $\cV_{2Q-\alpha_{r,s}}^0$ does not vanish. For the benefit of the reader, it may be useful to illustrate this fact on an explicit example at level 2. By a straightforward computation, one finds for all $\alpha\in\C$,
		\begin{align*}
			&\bL_{-2}^{0,\alpha}\ind=2(Q+\alpha)\varphi_2-\varphi_1^2;\qquad(\bL_{-1}^{0,\alpha})^2\ind=2\alpha\varphi_2+\alpha^2\varphi_1^2.
		\end{align*}
		To check when these two polynomials are colinear, we look for linear combinations cancelling $\varphi_1^2$, leading to
		\[(\alpha^2\bL_{-2}^{0,\alpha}+(\bL_{-1}^{0,\alpha})^2)\ind=2\alpha(\alpha^2+\alpha Q+1)\varphi_2=2\alpha(\alpha-\alpha_{1,2})(\alpha-\alpha_{2,1})\varphi_2.\]
		This polynomial vanishes for the degenerate weights $\alpha_{1,2},\alpha_{2,1}<Q$, but not for the dual weights $2Q-\alpha_{1,2},2Q-\alpha_{2,1}$. The root $\alpha=0$ is artificial and corresponds to the level 1 computation: we have $\bL_{-1}^{0,\alpha}=\alpha\varphi_1$, which vanishes for $\alpha=\alpha_{1,1}=0$ (but not for the dual weight $2Q-\alpha_{1,1}=2Q$).
	\end{example}

	\section{Singular modules}\label{sec:poisson}
	In this section, we prove Theorems \ref{thm:poisson} and \ref{thm:singular}, following the outline of Section \ref{subsec:proof_overview}. We introduce and study the singular integrals in Sections \ref{subsec:preliminaries}, \ref{subsec:mero} and \ref{subsec:kac}. Then, we establish the link between singular integrals and descendants states in Section \ref{subsec:descendants}. Finally, Section \ref{subsec:proof_singular} puts the pieces together to conclude the proofs.

	\subsection{Preliminaries}\label{subsec:preliminaries}
	
	Recall $\A_t:=\{e^{-t}<|z|<1\}\subset\C$. For $r\in\N^*$, we set
	\begin{align*}
		&\Delta_t^r:=\{\boldw\in\A_t^r|\,\forall1\leq k<l\leq r,\,|w_k-w_l|>e^{-t}\}\\
		&\Delta_t^{r-1}(w):=\{\boldw\in\A_t^{r-1}|\,(\boldw,w)\in\Delta_t^r\}.
	\end{align*} 
	and for each $\boldw\in\Delta_t^r$,
	\[D_t(\boldw):=\cup_{j=1}^rD(w_j,e^{-t}),\] 
where $D(w,\epsilon)$ is the open disc centred at $w$ and of radius $\epsilon$. Note that $\Delta_t^r$ is invariant under the action of the $r^{\text{th}}$ symmetric group permuting the variables. Given $\boldw=(w_1,...,w_r)\in\A_t^r$ and $\bolds=(s_1,...,s_r)\in\Z^r$, we will use the shorthand $\boldw^{\bolds}:=\prod_{j=1}^rw_j^{s_j}$ throughout the article.

	Let $\alpha<Q$, $t,\epsilon>0$, and $r\in\N$. For $\beta>(Q-\alpha-r\gamma)_+$ arbitrary, we introduce the function $\Psi_\alpha^{t,\epsilon}:\Delta_t^r\to e^{-\beta c}\cD(\cQ)$ defined for $\d c\otimes\P_{\S^1}$-a.e. $(c,\varphi)$ by
	\begin{equation}\label{eq:def_psi_eps}
	\Psi_\alpha^{t,\epsilon}(\boldw):=e^{(\alpha+r\gamma-Q)c}\E_\varphi\left[\epsilon^\frac{\alpha^2}{2}e^{\alpha X_\epsilon(0)}\prod_{j=1}^r\epsilon^\frac{\gamma^2}{2}e^{\gamma X_\epsilon(w_j)}e^{-\mu e^{\gamma c}\int_{\A_t\setminus D_t(\boldw)}\epsilon^\frac{\gamma^2}{2}e^{\gamma X_\epsilon(z)}|\d z|^2}\right].
	\end{equation}
The prefactor $\epsilon^\frac{\alpha^2}{2}e^{\alpha X_\epsilon(0)}\prod_{j=1}^r\epsilon^\frac{\gamma^2}{2}e^{\gamma X_\epsilon(w_j)}$ can be treated thanks to the Cameron--Martin shift, as explained in Section \cite[Section 12.1]{KRV_DOZZ}. Applying this shift also allows us to see that $\Psi_\alpha^{t,\epsilon}(\boldw)$ has a limit in $e^{-\beta c}\cD(\cQ)$ as $\epsilon\to0$, given by
	\begin{equation}\label{eq:def_psi}
		\begin{aligned}
			\Psi_\alpha^t(\boldw)
			&:=\underset{\epsilon\to0}\lim\,\Psi_\alpha^{t,\epsilon}(\boldw)\\
			&=e^{(\alpha+r\gamma-Q)c}\prod_{j=1}^r|w_j|^{-\gamma\alpha}e^{\gamma P\varphi(w_j)-\frac{\gamma^2}{2}G_\del(w_j,w_j)}\prod_{1\leq k<l\leq r}e^{\gamma^2G_\D(w_k,w_l)}\\
			&\qquad\times\E_\varphi\left[e^{-\mu e^{\gamma c}\int_{\A_t\setminus D_t(\boldw)}\prod_{j=1}^re^{\gamma^2G_\D(z,w_j)}\frac{\d M_\gamma(z)}{|z|^{\gamma\alpha}}}\right].
		\end{aligned}
	\end{equation}
	Indeed, the Cameron--Martin shift amounts to a shift of the Dirichlet free field by $\alpha G_\D(0,\cdot\,)+\gamma\sum_{j=1}^rG_\D(w_j,\cdot\,)$. The remaining prefactor in front of the expectation comes from the variance of $\alpha X_\epsilon(0)+\gamma\sum_{j=1}^rX_\epsilon(w_j)$ (with respect to the Dirichlet free field) when applying the Cameron--Martin shift: the divergent part is compensated by the corresponding power of $\epsilon$, while the finite part is a cross-term of Dirichlet Green functions. This has been discussed at length in earlier works, and we refer for instance to the introduction of \cite[Section 3]{DKRV16}.
	
	For each $t>0$ and $\d c\otimes\P_{\S^1}$-a.e., $\Psi_\alpha^t$ is invariant under permutation of its variables. Moreover, $\Psi_\alpha^t(\boldw)\in e^{-\beta c}\cD(\cQ)$ for all $\beta>(Q-\alpha-r\gamma)_+$, and for any $\boldw\in\Delta_t^r$, $\Psi_\alpha^t(\boldw)$ converges as $t\to\infty$ in $e^{-\beta c}\cD(\cQ)$ and $\d c\otimes\P_{\S^1}$-a.e., to the limit 
\begin{equation}\label{eq:expression_psi}	
	\begin{aligned} 
		\Psi_\alpha(\boldw)
		&=e^{(\alpha+r\gamma-Q)c}\prod_{j=1}^r|w_j|^{-\gamma\alpha}e^{\gamma P\varphi(w_j)-\frac{\gamma^2}{2}G_\del(w_j,w_j)}\\
		&\quad\prod_{1\leq k<l\leq r}e^{\gamma^2G_\D(w_k,w_l)}\E_\varphi\left[e^{-\mu e^{\gamma c}\int_\D\prod_{j=1}^re^{\gamma^2G_\D(z,w_j)}\frac{\d M_\gamma(z)}{|z|^{\gamma\alpha}}}\right].
	\end{aligned}
	\end{equation}
	From \cite[appendix C.1]{GKRV20_bootstrap}, we know that $\alpha\mapsto\Psi_\alpha(\boldw)$ admits an analytic continuation in a complex neighbourhood of $(-\infty,Q)$. The states $\Psi_\alpha(\boldw)$ and integrals $\cI_{r,\bolds}(\alpha)$ are always viewed as elements of a weighted space $e^{-\beta c}\cD(\cQ)$ for some arbitrary real number $\beta>(Q-\alpha-r\gamma)_+$.
	
	\begin{remark}
		Given a function $f\in L^1(\Delta_t^r)$, the extension of $f$ by zero in $\D^r\setminus\bar{\Delta}_t^r$ gives a continuous embedding $L^1(\Delta_t^r)\hookrightarrow L^1(\D^r)$, so we will freely identify functions in $L^1(\Delta_t^r)$ with functions in $L^1(\D^r)$.
	\end{remark}

	One key identity to our results is the following derivative formula for $\Psi_\alpha(\cdot)$. This formula has been written in various forms elsewhere \cite{KRV19_local,Oikarinen19_smooth,GKRV20_bootstrap}, and its proof is a consequence of Gaussian integration by parts.
	\begin{lemma}
		For all $F\in\cC$, the map $\boldw\mapsto\E[\Psi_\alpha^t(\boldw)F]$ is smooth over $\Delta_t^r$, and we have
		\begin{equation}\label{eq:derivative_formula}
			\begin{aligned}
				\del_{w_1}\E[\Psi_\alpha^t(\boldw)F]
				&=\left(\alpha\gamma\del_{w_1}G(w_1,0)+\gamma^2\sum_{j=2}^r\del_{w_1}G(w_1,w_j)\right)\E[\Psi_\alpha^t(\boldw)F]\\
				&\quad-\mu\gamma^2\int_{\A_t\setminus D_t(\boldw)}\del_{w_1}G(w_1,w_{r+1})\E[\Psi_\alpha^t(\boldw,w_{r+1})F]|\d w_{r+1}|^2\\
				&\quad+\E[\Psi_\alpha^t(\boldw)\nabla F(\del_{w_1}G_\del(w_1,\cdot\,))]\\
				&\quad -\frac{i}{2}\mu \int_{\partial D(w_1,e^{-t})}\E[\Psi_\alpha^t(\boldw,w_{r+1})F]\d\Bar{w}_{r+1} .
			\end{aligned}
		\end{equation}
		By the penultimate term, we mean the following: for all $w_1\in\D$ and $e^{i\theta}\in\S^1$ we have $\del_{w_1}G_\del(w_1,e^{i\theta})=\frac{1}{2}\frac{e^{-i\theta}}{1-e^{-i\theta}w_1}=\frac{1}{2}\sum_{n=0}^\infty e^{-i(n+1)\theta}w_1^n$, and 
		\begin{equation}\label{eq:expand_nabla}
			\nabla F(\del_{w_1}G_\del(w_1,\cdot\,))=\frac{1}{2}\sum_{n=1}^\infty w_1^{n-1}\del_{-n}F.
		\end{equation}
		For $F\in\cC$, the sum is actually finite (since $F$ depends only on finitely many modes) and $\nabla F(\del_{w_1}G_\del(w_1,\cdot\,))\in\cC$.
	\end{lemma}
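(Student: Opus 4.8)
The plan is to treat this as a Ward-type identity: regularise the chaos, differentiate under the expectation, integrate by parts in the underlying Gaussian field, and pass to the limit; this mirrors the computations of \cite{KRV19_local,Oikarinen19_smooth,GKRV20_bootstrap}, the only genuinely new feature being the $w_1$-dependence of the excised region $\A_t\setminus D_t(\boldw)$. Concretely, fix $F=f(\varphi_1,\dots,\varphi_N)\in\cC$ and work first with the mollified object $\Psi_\alpha^{t,\epsilon}(\boldw)$ of \eqref{eq:def_psi_eps}, in which the chaos is replaced by $\epsilon^{\gamma^2/2}e^{\gamma X_\epsilon(z)}|\d z|^2$; one establishes the identity for $\epsilon>0$ and then lets $\epsilon\to0$. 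For $\epsilon>0$ the field $X_\epsilon$ is smooth, the potential factor is bounded by $1$ in modulus (as $\Re\mu>0$), and the insertion prefactor has all moments, so uniform-on-compacts control gives both smoothness of $\boldw\mapsto\E[\Psi_\alpha^{t,\epsilon}(\boldw)F]$ on the open set $\Delta_t^r$ and the license to differentiate under $\E_\varphi$.

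In \eqref{eq:def_psi_eps} the variable $w_1$ enters only through the insertion $\epsilon^{\gamma^2/2}e^{\gamma X_\epsilon(w_1)}$ and through the domain $\A_t\setminus D_t(\boldw)$, since $D(w_1,e^{-t})\subset D_t(\boldw)$ translates with $w_1$. Differentiating the insertion brings down $\gamma(\del_{w_1}X_\epsilon)(w_1)$; differentiating the domain gives, by the complex Green identity $\del_{w_1}\int_{D(w_1,e^{-t})}h\,|\d z|^2=\tfrac{i}{2}\oint_{\partial D(w_1,e^{-t})}h\,\d\bar z$ with $h=\epsilon^{\gamma^2/2}e^{\gamma X_\epsilon(z)}$, a contour integral over $\partial D(w_1,e^{-t})$ of the mollified chaos density times the rest of the integrand. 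There the running point $w_{r+1}\in\partial D(w_1,e^{-t})$ sits at the fixed distance $e^{-t}$ from $w_1$, so the Cameron--Martin shift of \cite[Section 12.1]{KRV_DOZZ} shows that the integrand converges, as $\epsilon\to0$, to $\E[\Psi_\alpha^t(\boldw,w_{r+1})F]$; this is the last (boundary) term of \eqref{eq:derivative_formula}, the sign being fixed by orienting $\partial D(w_1,e^{-t})$ as the boundary of the excised region.

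It remains to handle the term $\gamma\,\E_\varphi[(\del_{w_1}X_\epsilon)(w_1)\,(\cdot)\,F]$. Since $(\del_{w_1}X_\epsilon)(w_1)$ is a centred Gaussian — a linear functional of $X=X_\D+P\varphi$ — Gaussian integration by parts splits it into one term per factor on which its covariance acts. Acting on $e^{\alpha X_\epsilon(0)}$ gives the covariance $\alpha\gamma\,\del_{w_1}G_\epsilon(w_1,0)\to\alpha\gamma\,\del_{w_1}G(w_1,0)$, the first piece of \eqref{eq:derivative_formula}; acting on $e^{\gamma X_\epsilon(w_j)}$, $j\ge2$, gives $\gamma^2\,\del_{w_1}G_\epsilon(w_1,w_j)\to\gamma^2\,\del_{w_1}G(w_1,w_j)$, summing to the second piece; acting on the self-insertion $e^{\gamma X_\epsilon(w_1)}$ gives $\gamma^2$ times the $w_1$-derivative of the regularised self-energy, which in the limit is exactly cancelled by differentiating the explicit $e^{-\frac{\gamma^2}{2}G_\del(w_1,w_1)}$ prefactor of $\Psi_\alpha^t$, so no net self-term survives; acting on the potential $\exp(-\mu e^{\gamma c}\int_{\A_t\setminus D_t(\boldw)}\epsilon^{\gamma^2/2}e^{\gamma X_\epsilon})$ produces, through its functional derivative, $-\mu\gamma^2 e^{\gamma c}\int_{\A_t\setminus D_t(\boldw)}\del_{w_1}G_\epsilon(w_1,z)\,\epsilon^{\gamma^2/2}e^{\gamma X_\epsilon(z)}(\cdot)\,|\d z|^2$, which converges (after the Cameron--Martin shift making $z$ a genuine $\gamma$-insertion at macroscopic distance from the $w_j$'s) to $-\mu\gamma^2\int_{\A_t\setminus D_t(\boldw)}\del_{w_1}G(w_1,w_{r+1})\E[\Psi_\alpha^t(\boldw,w_{r+1})F]\,|\d w_{r+1}|^2$; and since the only coupling of $(\del_{w_1}X_\epsilon)(w_1)$ to the $\varphi$-content of $F$ is through $(\del_{w_1}(P\varphi)_\epsilon)(w_1)$, whose covariance against $\varphi(e^{i\theta})$ is $\del_{w_1}G_{\del,\epsilon}(w_1,e^{i\theta})\to\del_{w_1}G_\del(w_1,e^{i\theta})$, acting on $F$ gives the term $\E[\Psi_\alpha^t(\boldw)\,\nabla F(\del_{w_1}G_\del(w_1,\cdot))]$ of \eqref{eq:derivative_formula}, with $\nabla F$ as in \eqref{eq:expand_nabla}. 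Collecting the contributions and letting $\epsilon\to0$ gives \eqref{eq:derivative_formula}, and smoothness of $\boldw\mapsto\E[\Psi_\alpha^t(\boldw)F]$ follows by iterating, as every term on the right is again of this shape or a locally uniform integral of such.

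The algebra is the standard covariance bookkeeping of Gaussian integration by parts; the main obstacle is analytic. One must justify the two limit interchanges ($\del_{w_1}$ with $\E_\varphi$, and with $\epsilon\to0$), which needs locally uniform bounds not only on the mollified chaos but on its one-point-inserted variants $\Psi_\alpha^{t,\epsilon}(\boldw,w_{r+1})$ entering the bulk and boundary terms — precisely the estimates of \cite{KRV19_local,Oikarinen19_smooth,GKRV20_bootstrap}. More delicate is the geometry of the excised region: the discs $D(w_j,e^{-t})$ may overlap or protrude from $\A_t$, so a priori $\del_{w_1}$ of the region integral is a contour integral over $\partial D(w_1,e^{-t})$ intersected with $\A_t\setminus\bigcup_{j\ge2}D(w_j,e^{-t})$ rather than over the full circle, and one must check — using continuity of $\Psi_\alpha^t$ up to $\partial\Delta_t^{r+1}$, or by first shrinking the radii and restoring them — that the missing arcs contribute integrands that vanish or reassemble into the stated contour integral. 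With these points settled, the identity follows as outlined.
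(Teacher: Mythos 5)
Your proposal is correct and follows essentially the same route as the paper: regularise the chaos, differentiate under the expectation, apply Gaussian integration by parts with the Leibniz rule to generate the covariance terms against the $\alpha$- and $\gamma$-insertions, the potential, and $F$ (only $G_\del$ surviving for the latter), account for the moving domain via the complex Green formula to produce the contour term, and pass to the limit $\epsilon\to0$. Your treatment is in fact somewhat more detailed than the paper's own sketch (which defers the regularisation and limit-interchange estimates to \cite{KRV19_local,Oikarinen19_smooth}), and your orientation convention for $\partial D(w_1,e^{-t})$ reproduces the stated sign of the boundary term.
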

	\begin{proof}
The formula is an adaptation of \cite[(3.27)]{KRV19_local} for the differential of Liouville correlation functions, which is based on Gaussian integration by parts. A similar formula appears in \cite[Section 2.3]{Oikarinen19_smooth} (see in particular (2.13) there). Since this is standard material, we will only recall the main steps of the derivation, which is based on Gaussian integration by parts. In principle, one should work with the $\epsilon$-regularisation of the field and pass to the limit; this step is done carefully in \cite[Lemma 3.5]{KRV19_local} and \cite[Section 2.3]{Oikarinen19_smooth}, so we will omit this part. 

 The functional $\Psi_\alpha^t(\boldw)$ depends on $w_1$ through the exponential $e^{\gamma X(w_1)}$, and through the domain of integration of the GMC. Using that $\del_{w_1}e^{\gamma X(w_1)}=\gamma\del_{w_1}X(w_1)e^{\gamma X(w_1)}$, we need to study the insertion of $\del_{w_1}X(w_1)$ inside the expectation. By Gaussian integration by parts, this amounts to differentiating the functional (of the field $X$) inside the expectation in the direction of $\del_{w_1}G(w_1,\cdot\,)$. The functional in question is a product of exponentials of the field at the marked points, and of the Laplace transform of GMC. Applying the Leibniz rule produces a sum of different terms appearing in \eqref{eq:derivative_formula}:
\begin{itemize}
\item The contribution of $\alpha\del_{w_1}G(w_1,0)$ comes from the differential of $e^{\alpha X(0)}$, and that of $\gamma\del_{w_1}G(w_1,w_j)$ comes from the differential of $e^{\gamma X(w_j)}$. The first line of \eqref{eq:derivative_formula} is analogous to the sum in the RHS of \cite[(3.27)]{KRV19_local}.
\item The second line of \eqref{eq:derivative_formula} is the contribution of the differential of the Laplace transform of GMC. It is analogous to the last term in the RHS of \cite[(3.27)]{KRV19_local}.
\item The third line of \eqref{eq:derivative_formula} is the contribution of the differential of $F$ in direction $\del G(w_1,\cdot\,)$. Only $\del G_\del(w_1,\cdot\,)$ contributes since $F$ is independent of $X_\D$. 
\end{itemize}
Finally, the last line of \eqref{eq:derivative_formula} accounts for the variation of the domain of integration of the GMC variable (recall that this domain excludes $D(w_1,e^{-t})$). Indeed, in $\cC'$ we have $\del_{w_1}\int_{D(w_1,e^{-t})}\d M_\gamma(z)=-\frac{i}{2}\int_{\del D(w_1,e^{-t})}e^{\gamma X(w)}\d\bar w$, with the RHS understood in the similar way as the potential $V$ from \eqref{eq:potential}. The precise justification for this identity is the same as the one leading to the expression of $\bL_n$ in \cite[(1.6)]{BGKRV22} in the special case $n=-1$. It is itself a consequence of \cite[Lemma 5.4]{GKRV20_bootstrap} and we refer the reader to its proof for details.
	\end{proof} 
	
	For a partition $(r,\boldsymbol{s})$, we introduce
	\begin{equation}\label{eq:def_I}
		\cI_{r,\boldsymbol{s}}^t(\alpha):=\int_{\Delta_t^r}\Psi_\alpha^t(\boldw)\frac{|\d\boldw|^2}{\boldw^{\bolds}}\in e^{-\beta c}\cD(\cQ).
	\end{equation}
and we recall the notation $\boldw^{\bolds}=\prod_{j=1}^rw_j^{s_j}$.
	
	Our goal is to investigate the existence and analyticity (in $\alpha$) of a limit $\cI_{r,\bolds}$ as $t\to\infty$. The next proposition is a criterion for the absolute integrability of $\Psi_\alpha^t(\cdot)$ on $\D^r$, which refines \cite[Proposition 7.2]{GKRV20_bootstrap}. It gives a bound for arbitrary partitions $(r,\bolds)\in\cT$, but we will actually only need it for the rectangular partition $(r,(s,...,s))$.
	
	\begin{proposition}\label{prop:regular_rectangle}
		Let $(r,\bolds)\in\cT$. There exists a complex neighbourhood of $(-\infty,\alpha_{r,|\bolds|/r})$ such that, $\d c\otimes\P_{\S^1}$-a.e., we have
		\[\underset{t\to\infty}\lim\,\int_{\A_t^r}\Psi_\alpha(\boldw)\frac{|\d\boldw|^2}{|\boldw|^{2\bolds}}<\infty.\]
		As a consequence, for all $\beta>0$, the map $\alpha\mapsto\cI_{r,\bolds}(\alpha)\in e^{-\beta c}\cD(\cQ)$ is analytic in a neighbourhood of $(Q-r\gamma-\beta,\alpha_{r,\frac{|\bolds|}{2r}})$. We denote this neighbourhood by $\cW_{r,\bolds}$ and call it the region of uniform integrability. The same holds if we assume only $(s_1,..,s_r)\in\R_+^r$.
		
	\end{proposition}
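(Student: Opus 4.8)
The plan is to read the bound off the explicit expression \eqref{eq:expression_psi} for $\Psi_\alpha(\boldw)$ and to separate the three mechanisms that can destroy integrability of the integrand $\Psi_\alpha(\boldw)\,|\boldw|^{-2\bolds}$ on $\D^r$: the approach of one or more of the $w_j$ to $\S^1$, the interior collisions $w_k\to w_l$, and the fusion of a subset of the $w_j$ at the origin (where the extra weight is supported and competes with the $\alpha$-insertion at $0$). Only the last mechanism involves $\alpha$ in an essential way, and it is the one that produces the Kac parameter. First I take $\alpha\in\R$ and $\mu>0$: since $G_\D\geq0$ on $\D\times\D$, the GMC integral inside the expectation in \eqref{eq:expression_psi} is nonnegative, whence $0\leq\E_\varphi[\,\cdots\,]\leq1$. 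The approach to $\S^1$ and the interior collisions $w_k\to w_l$ away from $0$ carry no $\alpha$-dependence and are treated exactly as in the proof of \cite[Proposition 7.2]{GKRV20_bootstrap}: the boundary factors $(1-|w_j|^2)^{\gamma^2/2}e^{\gamma P\varphi(w_j)}$ are $\d c\otimes\P_{\S^1}$-a.e.\ integrable near $\S^1$ (by Kahane's GMC theory, as in \cite{DKRV16}), and near a collision one must keep the GMC term --- bounding $\E_\varphi[\,\cdots\,]$ by $1$ is too lossy once $\gamma^2\geq2$ --- its reweighting $\prod_je^{\gamma^2G_\D(z,w_j)}$ blowing up exactly where $\prod_{k<l}e^{\gamma^2G_\D(w_k,w_l)}$ does, so that $e^{-\mu e^{\gamma c}(\cdots)}$ supplies the compensating decay, as in the fusion analysis of \cite{KRV19_local,Oikarinen19_smooth}. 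I would not reproduce this part.

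The new content is the behaviour at the origin. I would decompose a neighbourhood of $0$ in $\D^r$ dyadically according to which subset $A\subseteq\{1,\dots,r\}$ of the variables are those lying within distance $\lambda$ of $0$ (the remaining ones staying bounded away from $0$), rescaling $w_j=\lambda u_j$ for $j\in A$. Bounding the GMC expectation by $1$ and using $e^{\gamma^2G_\D(w_k,w_l)}\approx|w_k-w_l|^{-\gamma^2}$ near the origin, the contribution of such a cluster is, after integrating out the rescaled shape, at most a constant times
\[
\lambda^{\,e_A(\alpha)-1}\,\d\lambda,\qquad e_A(\alpha):=2|A|-|A|\gamma\alpha-2\sum_{j\in A}s_j-\tfrac{\gamma^2}{2}|A|(|A|-1),
\]
the three nontrivial exponents coming respectively from $\prod_{j\in A}|w_j|^{-\gamma\alpha-2s_j}$ and from the cross-terms among the $w_j$, $j\in A$; the rescaled shape integral is finite by the interior-collision analysis recalled above. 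Thus the scale integral near the cluster converges as soon as $e_A(\alpha)>0$, i.e.\ as soon as $\alpha<\alpha_{|A|,\bar{s}_A}$ with $\bar{s}_A:=|A|^{-1}\sum_{j\in A}s_j$. For the rectangular partition $\bar{s}_A=|\bolds|/r$ for every $A$, and $k\mapsto\alpha_{k,|\bolds|/r}$ is strictly decreasing, so all of these conditions hold as soon as the one for the full fusion $A=\{1,\dots,r\}$ does, namely $\alpha<\alpha_{r,|\bolds|/r}$; the identical count for $\cI_{r,\bolds}$ (see \eqref{eq:def_I}), whose weight is $\boldw^{-\bolds}$ rather than $|\boldw|^{-2\bolds}$, replaces each $2s_j$ by $s_j$ and yields the threshold $\alpha_{r,|\bolds|/2r}$. (The count uses only the powers, so it applies verbatim to $(s_1,\dots,s_r)\in\R_+^r$.) One then takes $\cW_{r,\bolds}$ to be a thin complex tube around the set of $\alpha$ satisfying all the cluster inequalities together with $\Re(\alpha)>Q-r\gamma-\beta$, the latter inclusion being what places the integral in $e^{-\beta c}\cD(\cQ)$. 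For analyticity, observe that each $\cI_{r,\bolds}^t(\alpha)$ is entire in $\alpha$ (on $\Delta_t^r$ all singularities are cut off), that the bound above holds uniformly in $t$ and on compacts of $\cW_{r,\bolds}$ --- after extending it to the complex tube by dominating $|\Psi_\alpha(\boldw)|$ by its $\Re(\alpha)$-version up to bounded corrections, as in \cite[Appendix C.1]{GKRV20_bootstrap} --- so that $\cI_{r,\bolds}$ is a locally uniform limit of the $\cI_{r,\bolds}^t$ (the convergence of the $t$-limit itself being part of the framework of \cite{GKRV20_bootstrap}), hence analytic on $\cW_{r,\bolds}$.

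I expect the main work to be in making the origin count honest when $\gamma^2\geq2$: bounding the GMC expectation by $1$ inside a cluster is then too lossy, because the rescaled shape integral diverges, so the GMC term must be carried through the rescaling and nested sub-clusters (a cluster at scale $\lambda$ containing a tighter one at scale $\delta\ll\lambda$, and so on) must be controlled. The clean implementation, following \cite{KRV19_local,Oikarinen19_smooth}, avoids pointwise arguments in the Dirichlet field: test $\cI_{r,\bolds}^t(\alpha)$ against $F\in\cC$, push the expectation inside, Girsanov away the $\alpha$- and $\gamma$-insertions, and bound $\E\big[e^{-\mu e^{\gamma c}Z}\big]\leq C_a\,\E[Z^{-a}]$ using finiteness of all negative moments of GMC integrals; since $\E[Z^{-a}]$ stays bounded (and even decays wherever the total fusing charge exceeds $Q$, so there the GMC helps rather than hurts), the integrability near $0$ is governed by the explicit prefactor power count and reproduces $\alpha<\alpha_{r,|\bolds|/r}$. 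The combinatorial bookkeeping of which subsets fuse at which nested scales, and the verification that for the rectangular partition the full fusion is the sharpest constraint, is the delicate part; a secondary technical point is the complex-$\alpha$ domination near $0$, where $|z|^{-\gamma\alpha}=|z|^{-\gamma\Re(\alpha)}e^{-i\gamma\Im(\alpha)\log|z|}$ carries an oscillating phase as $z\to0$, which is absorbed by shrinking the complex neighbourhood exactly as in \cite[Appendix C.1]{GKRV20_bootstrap}.
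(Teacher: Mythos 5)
Your power count is right and lands on the correct thresholds ($\alpha<\alpha_{r,|\bolds|/r}$ for the $|\boldw|^{-2\bolds}$ integral, $\alpha_{r,|\bolds|/2r}$ for $\cI_{r,\bolds}$, with the full-fusion cluster binding in the rectangular case), but the proposal has a genuine gap exactly where you flag it: the nested multi-scale control of the origin clusters when $\gamma^2\geq2$. Once you admit that bounding the GMC expectation by $1$ makes the rescaled shape integral $\int\prod_{k<l\in A}|u_k-u_l|^{-\gamma^2}\,\d\boldsymbol{u}$ diverge, the clean statement ``the GMC only helps, so the prefactor power count governs integrability'' is no longer a proof: you must show that carrying $\E[Z^{-a}]$ through the rescaling produces decay at pairwise collisions \emph{uniformly over the scale $\lambda$ of the cluster and over all nested sub-clusters}, and that this does not degrade the overall $\lambda^{e_A(\alpha)}$ scaling. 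That interleaving of fusion estimates across scales is the entire difficulty of the proposition, and ``the combinatorial bookkeeping \ldots is the delicate part'' is precisely the step a proof must supply. (A secondary caveat: your reduction of all cluster constraints to the full-fusion one is only verified for rectangular $\bolds$; for general $\bolds$ a sub-cluster such as $A=\{1\}$ with $s_1$ large gives $\alpha_{1,s_1}<\alpha_{r,|\bolds|/r}$, so the claimed region would shrink --- though the paper itself only uses the rectangular case.)

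The paper avoids the multi-scale analysis entirely, by two devices you may find worth internalising. First, fixed-$t$ absolute integrability of $\Psi_\alpha$ on $\A_t^r$ (pairwise collisions included) is obtained in one line from the $r$-th $\mu$-derivative of the smooth function $\mu\mapsto e^{(\alpha-Q)c}\E[e^{-\mu e^{\gamma c}\int_{\A_t}|z|^{-\gamma\alpha}\d M_\gamma}]$, so no fusion estimate at interior collisions is ever needed. Second, the $t\to\infty$ limit is handled by induction on $r$ combined with the exact self-similarity of the log-correlated field under $w\mapsto w/e$: splitting $\A_{0,n+1}^r$ according to how many variables lie in the innermost region, the partial clusters are absorbed by the induction hypothesis, and the full cluster satisfies the exact scaling identity \eqref{eq:scaling_annulus} relating $\int_{\A_{1,n+1}^r}$ to $\int_{\A_{0,n}^r}$ with ratio $e^{r\gamma(\alpha-\alpha_{r,|\bolds|/r})}e^{-\frac{1}{2}((\alpha+r\gamma-Q)^2-\beta^2)}<1$ for $\alpha<\alpha_{r,|\bolds|/r}$, turning the whole estimate into a convergent geometric series. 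Your dyadic decomposition is, in effect, a hand-built substitute for this renormalisation step; the exact scaling relation is what makes it rigorous without tracking nested sub-clusters.
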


For a fixed level $N\in\N$, we can choose $\beta$ large enough so that $\alpha_{r,\frac{\bolds}{2r}}>Q-N\gamma-\beta$ for all $(r,\bolds)\in\cT_N$. For this choice of $\beta$ Proposition \ref{prop:regular_rectangle} gives us a neighbourhood $\cW_{r,\bolds}$ on which the uniform integrability holds. The lower-bound of this neighbourhood is $Q-N\gamma-\beta$ and is uniform over the finite set $\cup_{n=0}^N\cT_n$.
	
	For the empty partition, we will denote the corresponding neighbourhood by $\cW_0$. This neighbourhood contains the interval $(Q-\beta,Q)$ for any $\beta>0$ by \cite[Appendix C.1]{GKRV20_bootstrap}. We also point out that when applied to the rectangular partition $(r,(s,...,s))$, Proposition \ref{prop:regular_rectangle} gives that the map $\alpha\mapsto\cI_{r,(s)}(\alpha)$ is analytic around $\alpha_{r,s}$.
	\begin{proof}[Proof of Proposition \ref{prop:regular_rectangle}]
		Note that the limit exists $\d c\otimes\P_{\S^1}$-a.e. by monotonicity, and it suffices to study the convergence along the sequence of annuli $\A_n$ for integer $n$. For simplicity, we assume $s_1=...=s_r=s$, and we will indicate the changes for the general case at the end of the proof.
		
		Recall the expression for $\Psi_\alpha(\boldw)$ in \eqref{eq:expression_psi}. Integrating over $\P_{\S^1}$ and applying the Cameron--Martin shift to the exponential prefactor in the first line of \eqref{eq:expression_psi}, we have
		\begin{equation}\label{eq:exp_psi}
		\begin{aligned}
			\E[\Psi_\alpha(\boldw)]=e^{(\alpha+r\gamma-Q)c}\prod_{j=1}^r|w_j|^{-\gamma\alpha}&\prod_{1\leq k<l\leq r}|w_k-w_l|^{-\gamma^2}
			\times\E\left[e^{-\mu e^{\gamma c}\int_\D\prod_{j=1}^r|z-w_j|^{-\gamma^2}\frac{\d M_\gamma(z)}{|z|^{\gamma\alpha}}}\right].
		\end{aligned}
		\end{equation}
		Moreover, for all $t>0$, the function $F:\mu\mapsto e^{(\alpha-Q)c}\E\left[e^{-\mu e^{\gamma c}\int_{\A_t}\frac{\d M_\gamma(z)}{|z|^{\gamma\alpha}}}\right]$ is smooth in $(0,\infty)$ (since the GMC is a positive random variable), and we have
		\begin{align*}
			\del_\mu^rF(\mu)
			&=(-\mu)^re^{(\alpha+r\gamma-Q)c}\int_{\A_t^r}\prod_{j=1}^r|w_j|^{-\gamma\alpha}\prod_{1\leq k<l\leq r}|w_k-w_l|^{-\gamma^2}\\
			&\qquad\qquad\qquad\times\E\left[e^{-\mu e^{\gamma c}\int_{\A_t}\prod_{j=1}^r|z-w_j|^{-\gamma^2}\frac{\d M_\gamma(z)}{|z|^{\gamma\alpha}}}\right]|\d\boldw|^2\\
		\end{align*}
		showing that $\boldw\mapsto\Psi_\alpha(\boldw)$ is integrable on $\A_t^r$ for each $t>0$, and 
		\[\int_{\A_t^r}\E[\Psi_\alpha(\boldw)]\frac{|\d\boldw|^2}{|\boldw|^{2s}}\leq e^{2trs}\mu^{-r}|\del_\mu^rF(\mu)|.\]
		
		Now, we prove the result by induction on $r$. For $r=1$, we have $\E[\Psi_\alpha(w)]\leq C|w|^{-\gamma\alpha}$ as $w\to0$, hence $\int_\D\E[\Psi_\alpha(w)]\frac{|\d w|^2}{|w|^{2s}}<\infty$ for $\alpha<\alpha_{1,s}$. In particular, $\int_\D\Psi_\alpha(w)\frac{|\d w|^2}{|w|^{2s}}$ is finite almost surely in this case.
		
		Let $r\geq2$. Suppose the result holds for all $r'<r$, and let us prove it at rank $r$. Given $m<n\in\N$, we use the notation $\A_{m,n}:=\{e^{-n}<|w|<e^{-m}\}$. For all $n\in\N^*$, we write
		\begin{equation}\label{eq:split_annulus}
		\begin{aligned}
			\int_{\A^r_{0,n+1}}\Psi_\alpha(\boldw)\frac{|\d\boldw|^2}{|\boldw|^{2s}}
			=\sum_{r'=0}^{r-1}\binom{r}{r'}\int_{\A_{0,1}^{r-r'}}\int_{\A_{1,n+1}^{r'}}\Psi_\alpha(\boldw)\frac{|\d\boldw|^2}{|\boldw|^{2s}}+\int_{\A_{1,n+1}^r}\Psi_\alpha(\boldw)\frac{|\d\boldw|^2}{|\boldw|^{2s}}.
			\end{aligned}
		\end{equation}
		By the induction hypothesis for $r'$ (since $\frac{|\d\boldw|^2}{|\boldw|^{2s}}$ is bounded on  $\A_{0,1}^{r-r'}$), all the terms in the sum converge $(\d c\otimes\P_{\S^1})$-a.e. as $n\to\infty$. To treat the last term in the RHS, we recall that $X$ is exactly $\log$-correlated in $\D$, i.e. $\E[X(z)X(w)]=\log\frac{1}{|z-w|}$ for all $z,w\in\D$. Hence, by a straightforward computation of covariance, its restriction to $\A_{1,n+1}$ satisfies the identity $X(w)\laweq X(ew)+\delta$, with $\delta$ a standard Gaussian independent of everything. From this identity and \eqref{eq:exp_psi}, we get the following scaling relation: 
		\begin{equation}\label{eq:small_annulus}
			\begin{aligned}
				\int_{\A_{1,n+1}^r}\E[\Psi_\alpha(\boldw)]\frac{|\d\boldw|^2}{|\boldw|^{2s}}
				&=e^{r\gamma\alpha+\frac{1}{2}\gamma^2 r(r-1)+2r(s-1)}\int_{\A_{0,n}^r}\prod_{j=1}^r|w_j|^{-\gamma\alpha}\prod_{1\leq k<l\leq r}|w_k-w_l|^{-\gamma^2}\\
				&\qquad\times e^{(\alpha+r\gamma-Q)c}\E\left[\exp\left(-\mu e^{\gamma c}\int_\D\prod_{j=1}^r|z-\frac{w_j}{e}|^{-\gamma^2}\frac{\d M_\gamma(z)}{|z|^{\gamma\alpha}}\right)\right]\frac{|\d\boldw|^2}{|\boldw|^{2s}}.
			\end{aligned}
		\end{equation}
		Note that the constant prefactor equals $e^{r\gamma(\alpha-\alpha_{r,s})}$. Next, we integrate the last line over the zero mode. We fix $\beta\geq0$ such that $\beta=0$ if $\alpha+r\gamma-Q>0$, and $\beta>Q-\alpha-r\gamma$ otherwise. Writing $\sigma:=\frac{1}{\gamma}(\beta+\alpha+r\gamma-Q)>0$, we have
		\begin{align*}
			&\int_\R e^{(\beta+\alpha+r\gamma-Q)c}\E\left[e^{-\mu e^{\gamma c}\int_\D\prod_{j=1}^r|z-\frac{w_j}{e}|^{-\gamma^2}\frac{\d M_\gamma(z)}{|z|^{\gamma\alpha}}}\right]\d c\\
			&\qquad=\frac{1}{\gamma}\mu^{-\sigma}\Gamma(\sigma)\E\left[\left(\int_\D\prod_{j=1}^r|z-\frac{w_j}{e}|^{-\gamma^2}\frac{\d M_\gamma(z)}{|z|^{\gamma\alpha}}\right)^{-\sigma}\right]\\
			&\qquad\leq\frac{1}{\gamma}\mu^{-\sigma}\Gamma(\sigma)\E\left[\left(\int_{e^{-1}\D}\prod_{j=1}^r|z-\frac{w_j}{e}|^{-\gamma^2}\frac{\d M_\gamma(z)}{|z|^{\gamma\alpha}}\right)^{-\sigma}\right]\\
			&\qquad=\frac{1}{\gamma}\mu^{-\sigma}\Gamma(\sigma)e^{\frac{1}{2}\gamma^2\sigma^2}e^{(-r\gamma^2-\gamma\alpha+\gamma Q)\sigma}\E\left[\left(\int_\D\prod_{j=1}^r|z-w_j|^{-\gamma^2}\frac{\d M_\gamma(z)}{|z|^{\gamma\alpha}}\right)^{-\sigma}\right]\\
			&\qquad=e^{-\frac{1}{2}((\alpha+r\gamma-Q)^2-\beta^2)}\int_\R e^{(\alpha+\beta+r\gamma-Q)c}\E\left[e^{-\mu e^{\gamma c}\int_\D\prod_{j=1}^r|z-w_j|^{-\gamma^2}\frac{\d M_\gamma(z)}{|z|^{\gamma\alpha}}}\right]\d c,
		\end{align*}
		where we used the same scaling relation in the second-to-last line. Plugging this into \eqref{eq:small_annulus}, we get
		\begin{equation}\label{eq:scaling_annulus}
		\begin{aligned}
			&\int_{\A_{1,n+1}^r}\norm{\Psi_\alpha(\boldw)}_{L^1(e^{\beta c}\d c\otimes\P_{\S^1})}\frac{|\d\boldw|^2}{|\boldw|^{2s}}\\
			&\qquad\leq e^{r\gamma(\alpha-\alpha_{r,s})}e^{-\frac{1}{2}((\alpha+r\gamma-Q)^2-\beta^2)}\int_{\A_{0,n}^r}\norm{\Psi_\alpha(\boldw)}_{L^1(e^{\beta c}\d c\otimes\P_{\S^1})}\frac{|\d\boldw|^2}{|\boldw|^{2s}}.
			\end{aligned}
		\end{equation}
		Recall that for each $r'\in\{0,...,r-1\}$, the term $\int_{\A_{0,1}^{r-r'}\times\A_{1,n}^{r'}}\Psi_\alpha(\boldw)\frac{|\d\boldw|^2}{|\boldw|^{2s}}$ converges almost everywhere as $n\to\infty$. By monotone convergence, we deduce that $\int_{\A_{0,1}^{r-r'}\times\A_{1,n}^{r'}}\norm{\Psi_\alpha(\boldw)}_{L^1(e^{\beta c}\d c\otimes\P_{\S^1})}\frac{|\d\boldw|^2}{|\boldw|^{2s}}$ converges to a finite limit, 
		denoted $\int_{A_{0,1}^{r-r'}\times e^{-1}\D^{r'}}\norm{\Psi_\alpha(\boldw)}_{L^1(e^{\beta c}\d c\otimes\P_{S^1})}\frac{|\d\boldw|^2}{|\boldw|^{2s}}$. Going back to \eqref{eq:split_annulus}, we have
		\begin{align*}
			&\int_{\A_{0,n+1}^r}\norm{\Psi_\alpha(\boldw)}_{L^1(e^{\beta c}\d c\otimes\P_{\S^1})}\frac{|\d\boldw|^2}{|\boldw|^{2s}}\\
			&\quad\leq\sum_{r'=0}^{r-1}\binom{r}{r'}\int_{\A_{0,1}^{r-r'}}\int_{e^{-1}\D^{r'}}\norm{\Psi_\alpha(\boldw)}_{L^1(e^{\beta c}\d c\otimes\P_{\S^1})}\frac{|\d\boldw|^2}{|\boldw|^{2s}}\\
			&\qquad+e^{r\gamma(\alpha-\alpha_{r,s})}e^{-\frac{1}{2}((\alpha+r\gamma-Q)^2-\beta^2)}\int_{\A_{0,n}^r}\norm{\Psi_\alpha(\boldw)}_{L^1(e^{\beta c}\d c\otimes\P_{\S^1})}\frac{|\d\boldw|^2}{|\boldw|^{2s}}.
		\end{align*}
		In the case $\alpha+r\gamma-Q>0$, we have $\beta=0$ so $e^{r\gamma(\alpha-\alpha_{r,s})}e^{-\frac{1}{2}(\alpha+r\gamma-Q)^2}<1$ for $\alpha<\alpha_{r,s}$. In the other case, we can choose $\beta$ arbitrarily close to $Q-r\gamma-\alpha$ to make this constant less than 1 as well. It follows that $\underset{n\to\infty}\lim\int_{\A_{0,n}^r}\norm{\Psi_\alpha(\boldw)}_{e^{\beta c}\d c\otimes\P_{\S^1}}\frac{|\d\boldw|^2}{|\boldw|^{2s}}<\infty$, which also implies that the limit $\underset{n\to\infty}\lim\,\int_{\A_{0,n}^r}\Psi_\alpha(\boldw)\frac{|\d\boldw|^2}{|\boldw|^{2s}}<\infty$ holds almost everywhere.
		
		Finally, we explain the changes in the case of arbitrary $\bolds$. First, the binomial coefficient in the RHS of \eqref{eq:split_annulus} must be replaced by a sum over all subsets of $\{1,...,r\}$ of cardinality $r'$. Second, the scaling constant $e^{r\gamma(\alpha-\alpha_{r,s})}$ in \eqref{eq:small_annulus} is replaced by $e^{r\gamma(\alpha-\alpha_{r,|\bolds|/r})}$. The rest of the argument is unchanged.
		
		With this convergence in hand, we can apply dominated convergence to get that $\cI_{r,\bolds}^t\underset{t\to\infty}\to\cI_{r,\bolds}$ locally uniformly in $\cW_{r,\bolds}$. Since $\cI_{r,\bolds}^t$ is analytic for each $t>0$, its local uniform limit is also analytic. 
	\end{proof}

	The next proposition extends the previous one by saying that adding some insertion points with no singularities does not change the integrability properties of the integrals.
	
	\begin{proposition}\label{lem:integrable}
		Let $(r,\bolds)\in\cT$, and $\tilde{r}\in\N^*$. For all $\beta>0$ and $\alpha\in(Q-(r+\tilde{r})\gamma-\beta,\alpha_{r,|\bolds|/r})$, the following limit exists in $e^{-\beta c}\cD(\cQ)$:
		\[\underset{t\to\infty}\lim\,\int_{\A_t^{r+\tilde{r}}}\Psi_\alpha(\boldw,\tilde{\boldw})\frac{|\d\boldw|^2}{|\boldw|^{2\bolds}}|\d\tilde{\boldw}|^2.\]
		As a consequence, the map $\alpha\mapsto\underset{t\to\infty}\lim\,\int_{\A_t^{r+\tilde{r}}}\Psi_\alpha(\boldw,\tilde{\boldw})\frac{|\d\boldw|^2}{\boldw^{\bolds}}|\d\tilde{\boldw}|^2$ is well-defined and analytic on $\cW_{r,\bolds}$, with values in $e^{-\beta c}\cD(\cQ)$.
	\end{proposition}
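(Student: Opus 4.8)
The plan is to reduce the statement to a bound that is uniform in $t$ and locally uniform in $\alpha$, after which everything follows exactly as in the final lines of the proof of Proposition~\ref{prop:regular_rectangle}. Fix $\beta>0$ and a compact set $K$ in the relevant $\alpha$-region (the interval $(Q-(r+\tilde r)\gamma-\beta,\alpha_{r,|\bolds|/r})$, resp.\ $\cW_{r,\bolds}$), so that $\Re(\alpha)>Q-(r+\tilde r)\gamma-\beta$ on $K$, and write $I_t(\alpha):=\int_{\A_t^{r+\tilde r}}\Psi_\alpha(\boldw,\tilde{\boldw})\frac{|\d\boldw|^2}{|\boldw|^{2\bolds}}|\d\tilde{\boldw}|^2\in e^{-\beta c}\cD(\cQ)$. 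For each $t$ the integrand is a positive element of $e^{-\beta c}\cD(\cQ)$ and $I_t$ is analytic in $\alpha$ (an absolutely convergent integral over a bounded domain of $\alpha$-analytic integrands), so it suffices to bound $\norm{I_t(\alpha)}_{L^1(e^{\beta c}\,\d c\otimes\P_{\S^1})}$ uniformly in $t\ge 1$ and over $K$: monotone convergence produces the $\d c\otimes\P_{\S^1}$-a.e.\ limit, dominated convergence upgrades it to convergence in $e^{-\beta c}\cD(\cQ)$ and to local uniform convergence in $\alpha$, and the limit is therefore analytic on $\cW_{r,\bolds}$.

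Integrating $I_t(\alpha)$ against $e^{\beta c}\,\d c$ and then over $\P_{\S^1}$ --- the zero-mode ($c$) computation from the proof of Proposition~\ref{prop:regular_rectangle} --- produces a $\Gamma$-function prefactor and turns $\norm{I_t(\alpha)}_{L^1}$ into an integral over $\A_t^{r+\tilde r}$ of a Koba--Nielsen prefactor in the $r+\tilde r$ variables against $\E[\nu_{\boldw,\tilde{\boldw}}^{-\sigma}]$, where
\[\nu_{\boldw,\tilde{\boldw}}:=\int_\D\prod_{j=1}^r|z-w_j|^{-\gamma^2}\prod_{i=1}^{\tilde r}|z-\tilde w_i|^{-\gamma^2}\frac{\d M_\gamma(z)}{|z|^{\gamma\alpha}},\qquad \sigma:=\tfrac1\gamma\big(\beta+\alpha+(r+\tilde r)\gamma-Q\big)>0.\]
It is the occurrence of $r+\tilde r$ (rather than $r$) in $\sigma$ that forces the admissible lower endpoint of $\alpha$ down to $Q-(r+\tilde r)\gamma-\beta$.

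I would then show that the non-singular variables $\tilde{\boldw}$ contribute only a finite factor. Globally $\nu_{\boldw,\tilde{\boldw}}\ge 2^{-\tilde r\gamma^2}\nu_{\boldw}$, with $\nu_{\boldw}$ the mass without the $\tilde w_i$-insertions, so $\E[\nu_{\boldw,\tilde{\boldw}}^{-\sigma}]\le C\,\E[\nu_{\boldw}^{-\sigma}]$; away from the coincidences one may then integrate out the $\tilde{\boldw}$-part of the Koba--Nielsen prefactor over $\D^{\tilde r}$ against $\d\tilde{\boldw}$, which converges near $\tilde w_i\to 0$ precisely because $\gamma\Re(\alpha)<2$ --- a condition that holds throughout the region under consideration since $\alpha_{r,|\bolds|/r}\le 0$. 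What is left is $\int_{\A_t^r}\prod_j|w_j|^{-\gamma\alpha}\prod_{k<l}|w_k-w_l|^{-\gamma^2}\E[\nu_{\boldw}^{-\sigma}]\frac{|\d\boldw|^2}{|\boldw|^{2\bolds}}$, which is precisely the quantity bounded uniformly in $t$ and locally uniformly in $\alpha$ on $\cW_{r,\bolds}$ by the proof of Proposition~\ref{prop:regular_rectangle}; the simultaneous clustering of several of the variables near the origin is organised by running this analysis inside the same induction on $r$ as there.

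The delicate point --- and the step I expect to be the main obstacle --- is the coincidences of a non-singular point with a singular point (or with another non-singular point): the factor $|w_j-\tilde w_i|^{-\gamma^2}$ of the Koba--Nielsen prefactor is no longer locally integrable once $\gamma^2\ge 2$, so near such a collision the crude bound $\nu_{\boldw,\tilde{\boldw}}\ge c\,\nu_{\boldw}$ is too lossy. One must instead keep the Gaussian multiplicative chaos damping: restricting the integral defining $\nu_{\boldw,\tilde{\boldw}}$ to a ball of radius comparable to $|w_j-\tilde w_i|$ around the collision shows that $\nu_{\boldw,\tilde{\boldw}}$ blows up like a negative power of $|w_j-\tilde w_i|$, so $\E[\nu_{\boldw,\tilde{\boldw}}^{-\sigma}]$ decays in $|w_j-\tilde w_i|$; using the exact multifractal scaling of GMC together with the finiteness of negative moments of GMC masses of small balls, one checks that this decay outweighs the singularity $|w_j-\tilde w_i|^{-\gamma^2}$ (when $\gamma^2<2$ the singularity is already integrable and no GMC input is needed). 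Making this quantitative, uniformly over $\cW_{r,\bolds}$ and over the positions of the remaining insertions, is the technical heart of the argument, and is of the same flavour as the singular-integral estimates of \cite{Oikarinen19_smooth} and of Section~\ref{subsec:mero}.
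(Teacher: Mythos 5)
Your route is genuinely different from the paper's, and the step you defer as ``the technical heart'' is not merely technical: it is where the argument breaks. The paper never estimates the collisions at all. It observes that the integral with the $\tilde r$ extra non-singular insertions is exactly $(-1)^{\tilde r}\del_\mu^{\tilde r}$ of the integral \emph{without} them (differentiating the Laplace transform of GMC in the cosmological constant brings down the GMC mass, whose $\tilde r$-th power expands into the $\tilde r$ extra insertion points), and then controls this derivative by Cauchy's integral formula on a small $\mu$-contour in $\{\Re\mu>0\}$, using only Proposition \ref{prop:regular_rectangle} for complex $\mu$. This expresses the new integral in terms of the old one on a nearby contour and never separates the positive power $\nu^{\tilde r}$ from the exponential damping $e^{-\mu e^{\gamma c}\nu}$.

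Your reduction does separate them, and that is fatal in part of the claimed range. After the zero-mode integration you propose to bound
$\int(\text{Koba--Nielsen})\,\E[\nu_{\boldw,\tilde\boldw}^{-\sigma}]$ with $\sigma=\tfrac1\gamma(\beta+\alpha+(r+\tilde r)\gamma-Q)$, which (by Girsanov run backwards) is nothing but $\Gamma(\sigma)\mu^{-\sigma}\E[\nu_{\boldw}^{\tilde r-\sigma}]$ up to the $\boldw$-prefactors: a \emph{positive} GMC moment of order $\tilde r-\sigma$. Since GMC has finite moments only of order $<4/\gamma^2$, this $L^1(e^{\beta c}\d c\otimes\P_{\S^1})$ quantity is genuinely $+\infty$ whenever $\tilde r-\sigma\geq 4/\gamma^2$, e.g.\ for $\tilde r=2$, $\gamma^2>2$ and $\alpha$ near the lower endpoint $Q-(r+\tilde r)\gamma-\beta$ (where $\sigma\to0^+$) --- a regime squarely inside the statement. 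Concretely, your fusion estimate cannot close: near a collision of two $\gamma$-insertions the negative-moment decay is of order $\delta^{\sigma\gamma(2\gamma-Q)-\sigma^2\gamma^2/2}$, whose exponent vanishes linearly as $\sigma\to0$, so it cannot compensate the fixed non-integrable factor $\delta^{-\gamma^2}$ when $\gamma^2\geq2$. So a uniform $L^1(e^{\beta c}\d c\otimes\P_{\S^1})$ bound plus dominated convergence is not available here; the finiteness of the limit relies on the exponential damping at fixed $c$ (equivalently, on controlling $\E[\nu^{\tilde r}e^{-\lambda\nu}]$ by $\sup\E[e^{-\tilde\lambda\nu}]$ over a contour), which is precisely what the paper's $\mu$-differentiation argument retains and your pointwise bound $\nu_{\boldw,\tilde\boldw}\geq c\,\nu_{\boldw}$ discards. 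Your argument does work when $\tilde r<4/\gamma^2$ (in particular for all $\tilde r$ when $\gamma^2<2$, where the collision factors are already locally integrable), but not in the full stated range.
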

	\begin{proof}
		In this proof, we write explicitly the dependence on $\mu$ and $\tilde{r}$, i.e. we write $\cI_{r,(s)}^{\mu,\tilde{r}}(\alpha)$, and similarly $\Psi_\alpha^{\mu,\tilde{r}}$.
		
		The function $\mu\mapsto\Psi_\alpha^{\mu,0}(\boldw)$ is analytic in the region $\Re(\mu)>0$, with values in $e^{-\beta c}\cD(\cQ)$. On the other hand, we have
		\begin{equation}\label{eq:kpz}
			\del_\mu^{\tilde{r}}\Psi_\alpha^{\mu,0}(\boldw)=(-\mu)^{\tilde{r}}\int_{\D^{\tilde{r}}}\Psi_\alpha^{\mu,\tilde{r}}(\boldw,\tilde{w}_1,...,\tilde{w}_r)\prod_{j=1}^{\tilde{r}}|\d\tilde{w}_j|^2,
		\end{equation}
		so the RHS lies in $e^{-\beta c}\cD(\cQ)$ since the LHS does. Note that \eqref{eq:kpz} also holds for the regularised function $\Psi_\alpha^{\mu,0,t}(\boldw)$. By uniform absolute convergence for $\alpha\in\cW_{r,s}$ and Cauchy's integral formula on a small (but fixed) loop around $\mu$, we then get for all $\Re(\mu)>0$ and $\alpha\in\cW_{r,s}$:
		\begin{align*}
			\int_{\D^{r+\tilde{r}}}\Psi_\alpha^{\mu,\tilde{r}}(\boldw,\tilde{\boldw})\frac{|\d\boldw|^2}{|\boldw|^{2 s}}|\d\tilde{\boldw}|^2
			&=(-1)^{\tilde{r}}\del_\mu^{\tilde{r}}\int_{\D^r}\Psi_\alpha^{\mu,0}(\boldw)\frac{|\d\boldw|^2}{|\boldw|^{2s}}\\
			&=\frac{(-1)^{\tilde{r}}\tilde{r}!}{2i\pi}\oint\int_{\D^r}\Psi_\alpha^{\tilde{\mu},0}(\boldw)\frac{|\d\boldw|^2}{|\boldw|^{2s}}\frac{\d\tilde{\mu}}{(\tilde{\mu}-\mu)^{\tilde{r}+1}}.
		\end{align*}
		By the previous proposition, the last line is well-defined and analytic with values in $e^{-\beta c}\cD(\cQ)$ for all $\alpha\in\cW_{r,s}$, and it is the limit of its $t$-regularisation.

	\end{proof}
	
	\begin{remark}\label{circle}
  Given a partition $(r,\bolds)$, we will say that an integral is \emph{of type $(r,\bolds)$} if it is of the form 
\begin{equation}\label{eq:type_rs}
\int_{\D^{r+\tilde{r}}}\E[\Psi_\alpha(\boldw,\tilde{\boldw})F]\prod_{j=1}^r\frac{|\d w_j|^2}{w_j^{s_j}}\prod_{k=1}^{\tilde{r}}|\d\tilde{w}_k|^2,
\end{equation}
for some test function $F\in\cC$.  
We will use the same terminology when $\Psi_\alpha$ is replaced by its regularised version $\Psi_\alpha^t$, or if $\Psi_\alpha$ is not paired with a test function (in which case, the integral is an element of $e^{-\beta c}\cD(\cQ)$ upon convergence). By the previous proposition, integrals of type $(r,\bolds)$ converge absolutely in $\cW_{r,\bolds}$ and are analytic there. 
 \end{remark}
	
	Our goal in the remainder of this section is to show that $\cI_{r,\boldsymbol{s}}$ extends analytically to a complex neighbourhood of $(-\infty,Q)$, with values in suitable weighted spaces $e^{-\beta c}\cD(\cQ)$. We will do so in two steps. In Section \ref{subsec:mero}, we prove that $\cI_{r,\boldsymbol{s}}$ has a meromorphic continuation with possible poles on the Kac table. In Section \ref{subsec:kac}, we exclude the possibility that $\cI_{r,\bolds}$ has a pole, i.e. we show that $\cI_{r,\boldsymbol{s}}$ extends analytically on the Kac table.

	\subsection{Meromorphic continuation}\label{subsec:mero}
	In this section, we prove that $\cI_{r,\boldsymbol{s}}$ extends meromorphically to the region $\cW_0$, with possible poles contained in the Kac table. We proceed in two steps. In Proposition \ref{prop:s_is_one}, we prove that the extension holds for the minimal partitions $(N,(1))$, $N\in\N^*$. In Proposition \ref{prop:s_arbitrary}, we prove that the extension holds for all partitions in $\cT_N$, for each $N\in\N^*$. Both results are proved by induction.
	
	Recall the region $\cW_{r,\bolds}\subset\C$ introduced in Proposition \ref{prop:regular_rectangle}. For simplicity, we will write $\cW_r:=\cW_{r,(1)}$, and we note that $\cW_r$ contains a neighbourhood of $(Q-r\gamma-\beta,\alpha_{r,1/2})$ for some arbitrary large $\beta>0$. Since $\alpha_{r+1,1/2}<\alpha_{r,1/2}$, we can (and will) assume that $\cW_{r+1}\subset\cW_r$. We also recall that $\cW_0$ contains a neighbourhood of the half-line $(Q-\beta,Q)$ for $\beta$ arbitrary large.
	\begin{proposition}\label{prop:s_is_one}
		Let $r\in\N$. The function $\cI_{r,(1)}:\cW_r\to\cC'$ extends meromorphically to $\cW_0$. Moreover, the set of its poles is contained in the set $\{\alpha_{r',1}|\,1\leq r'\leq r\}$.
	\end{proposition}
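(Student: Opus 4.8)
The plan is to argue by induction on $r$ (equivalently, on the level $N=r$). The base case $r=0$ is the analyticity of $\cI_0=\Psi_\alpha$ on $\cW_0$, recorded after Proposition~\ref{prop:regular_rectangle} (it goes back to \cite[Appendix~C.1]{GKRV20_bootstrap}). Fix $r\geq1$ and a test function $F\in\cC$; on $\cW_r$ the integral $\cI_{r,(1)}^t(\alpha)[F]=\int_{\Delta_t^r}\E[\Psi_\alpha^t(\boldw)F]\prod_{j=1}^r\frac{|\d w_j|^2}{w_j}$ converges absolutely and is analytic in $\alpha$, by Proposition~\ref{prop:regular_rectangle}. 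The idea is to convert the singular weight $w_1^{-1}$ in the integrand into a total derivative plus milder terms, using the derivative formula~\eqref{eq:derivative_formula}. Since $\del_{w_1}G(w_1,0)=-\tfrac1{2w_1}$ and $\del_{w_1}G(w_1,w_j)=-\tfrac1{2(w_1-w_j)}$, solving \eqref{eq:derivative_formula} for the singular contribution gives
\[
\frac{\alpha\gamma}{2}\,\frac{1}{w_1}\,\E[\Psi_\alpha^t(\boldw)F]=-\del_{w_1}\E[\Psi_\alpha^t(\boldw)F]-\frac{\gamma^2}{2}\sum_{j=2}^r\frac{\E[\Psi_\alpha^t(\boldw)F]}{w_1-w_j}+(\mathrm{GMC})+(\mathrm{grad})+(\mathrm{circle}),
\]
where $(\mathrm{GMC})$, $(\mathrm{grad})$ and $(\mathrm{circle})$ denote the last three contributions in \eqref{eq:derivative_formula}. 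Multiplying by $\prod_{j\geq2}w_j^{-1}$ and integrating over $\Delta_t^r$ yields an ``initialisation recursion'' $\tfrac{\alpha\gamma}{2}\,\cI_{r,(1)}^t(\alpha)[F]=(\cdots)$ whose right-hand side I claim extends analytically in $\alpha$ to $\cW_0$ away from $kac^-$.

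The right-hand side is handled term by term. The total-derivative term is integrated by Stokes' theorem in $w_1$: its contribution over the outer circle $\{|w_1|=1\}$ is an integral carrying only the $r-1$ singular weights $\{w_j^{-1}\}_{j\geq2}$ together with a boundary insertion on $\S^1$, hence of strictly lower complexity; the inner-circle contributions over $\{|w_1|=e^{-t}\}$ and $\{|w_1-w_j|=e^{-t}\}$ either vanish as $t\to\infty$ on $\cW_r$ or cancel against $(\mathrm{circle})$, by the quantitative convergence estimates underlying Proposition~\ref{prop:regular_rectangle}. The collision terms $\tfrac{1}{w_1-w_j}\E[\Psi_\alpha^t(\boldw)F]$ and the term $(\mathrm{GMC})$ are reorganised, via the symmetrisation trick and the geometric expansion $\tfrac1{w_1-w_j}=-\sum_{n\geq0}w_1^n w_j^{-n-1}$ on $\{|w_1|<\min_{j\geq2}|w_j|\}$ (the complementary regions being symmetric), into locally uniformly convergent series of integrals that are either of level $\leq r-1$ (with extra regular insertions), or of level $r$ but attached to a partition strictly larger than $(r,(1))$ in the order $\preceq$ on $\cT_r$; the term $(\mathrm{grad})$ is treated the same way through \eqref{eq:expand_nabla}. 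All of these are covered by the induction hypothesis — in the form of a joint induction in which, before reaching level $r$, one has established the analytic continuation for every partition of every level $<r$ and for every partition of level $r$ lying strictly above $(r,(1))$ (the latter being the content of Proposition~\ref{prop:s_arbitrary}, whose induction on $\cT_N$ has the present statement as its base case) — together with Proposition~\ref{lem:integrable} to absorb the regular insertions and its $\S^1$-boundary analogue. Hence each term on the right-hand side extends to $\cW_0$ with poles contained in $\{\alpha_{r',1}\,|\,1\leq r'\leq r-1\}$ (a fortiori, in $kac^-$).

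Letting $t\to\infty$ — every term converges locally uniformly in $\alpha$ on a neighbourhood of $(-\infty,Q)$ by the estimates above, so each limit is analytic there — the recursion becomes an identity $\tfrac{\alpha\gamma}{2}\,\cI_{r,(1)}(\alpha)[F]=\Lambda(\alpha)[F]$ on $\cW_r$, with $\Lambda(\cdot)[F]$ analytic on $\cW_0$ away from $\{\alpha_{r',1}\,|\,r'\leq r-1\}$. Dividing by $\tfrac{\alpha\gamma}{2}$, which vanishes only at $\alpha_{1,1}=0$, produces a meromorphic continuation of $\cI_{r,(1)}(\cdot)[F]$ to $\cW_0$ whose poles are contained in $\{\alpha_{r',1}\,|\,r'\leq r-1\}\cup\{0\}\subseteq\{\alpha_{r',1}\,|\,1\leq r'\leq r\}$; on the connected, nonempty overlap $\cW_r\cap\cW_0$ it coincides with $\cI_{r,(1)}(\cdot)[F]$ by uniqueness of analytic continuation. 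Since $F\in\cC$ was arbitrary, this gives the claimed extension and completes the induction.

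\textbf{Main obstacle.} The algebraic identity above is straightforward; the real work is, first, to pin down the right inductive quantity so that every term produced on the right-hand side — in particular the collision and $(\mathrm{GMC})$ terms after the geometric/partial-fraction expansions, which naively carry the same $r$-fold fusion singularity at the origin as $\cI_{r,(1)}$ itself — is genuinely covered by the induction (strictly lower level, or level $r$ but strictly higher in $\preceq$, or reducible by a further use of \eqref{eq:derivative_formula}); this bookkeeping is precisely what fixes the shape of the recursion. Second, one must control the $t\to\infty$ limit of every term uniformly and locally uniformly in $\alpha$ on a neighbourhood of $(-\infty,Q)$ — in particular making rigorous sense of, and estimating, the boundary ($\S^1$) insertions produced by Stokes' theorem and the inner-circle terms near the excluded discs $D(w_j,e^{-t})$ — which is exactly where the full strength of Propositions~\ref{prop:regular_rectangle} and~\ref{lem:integrable} is needed.
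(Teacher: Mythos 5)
Your overall architecture (derivative formula, Stokes, symmetrisation, induction on $r$) matches the paper's, but there is a genuine gap in your treatment of the collision terms $\frac{1}{w_1-w_j}\,\E[\Psi_\alpha^t(\boldw)F]$, and it sits exactly where the content of the proposition lives. After the symmetrisation trick (symmetry of $\Psi_\alpha^t$ in $(w_1,w_j)$ against antisymmetry of $(w_1-w_j)^{-1}$), each such term integrated against $\prod_{k\geq2}\frac{|\d w_k|^2}{w_k}|\d w_1|^2$ equals $\frac12\int_{\Delta_t^r}\E[\Psi_\alpha^t(\boldw) F]\prod_{k=1}^r\frac{|\d w_k|^2}{w_k}$, i.e.\ a constant multiple of $\cI_{r,(1)}^t$ itself --- it is neither of level $\leq r-1$ nor attached to a partition strictly above $(r,(1))$. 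These terms cannot be ``covered by the induction''; they must be moved to the left-hand side, whereupon the prefactor becomes $-\frac{\gamma}{2}(\alpha-\alpha_{r,1})$ rather than your $\frac{\alpha\gamma}{2}$ (this is \eqref{eq:show_kac}). That shift is not cosmetic: it is the sole mechanism producing the new potential pole at $\alpha_{r,1}$, which your final pole set $\{\alpha_{r',1}\,|\,r'\leq r-1\}\cup\{0\}$ omits for every $r\geq2$. Your proposed alternative --- the geometric expansion $\frac{1}{w_1-w_j}=-\sum_{n\geq0}w_1^nw_j^{-n-1}$ --- makes matters worse: combined with the remaining weight $w_j^{-1}$ it generates singularities $w_j^{-n-2}$ of unbounded order, hence an infinite family of integrals of levels $r,r+1,r+2,\dots$ whose regions of absolute convergence shrink with $n$; no induction at level $\leq r$ controls these, and the series cannot be summed uniformly in $\alpha$.

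A second, related problem is that your ``joint induction'' is circular: you invoke Proposition \ref{prop:s_arbitrary} for level-$r$ partitions strictly above $(r,(1))$ in $\preceq$, but that proposition is proved by induction on the distance to the minimal partition $(r,(1))$, so its base case is precisely the statement at hand. The paper sidesteps both issues by differentiating in the variable $w_r$ that carries \emph{no} singular weight and integrating \eqref{eq:derivative_formula} against $\prod_{j=1}^{r-1}\frac{|\d w_j|^2}{w_j}|\d w_r|^2$: the GMC term then vanishes identically by antisymmetry, the gradient and $\S^1$-boundary terms are honest rank-$(r-1)$ integrals, and the only level-$r$ objects produced are the symmetrised collision terms, which recombine with $\alpha\gamma\,\del_{w_r}G(w_r,0)=-\frac{\alpha\gamma}{2w_r}$ into $-\frac{\gamma}{2}(\alpha-\alpha_{r,1})\cI_{r,(1)}^t$ on the left. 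Your Stokes/boundary analysis and the treatment of the gradient term are essentially fine; the fix is to absorb the collision terms into the left-hand side rather than expand them.
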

	\begin{proof}
		We will prove that for each $r\in\N^*$, $\cI_{r,(1)}$ can be expressed as a sum of integrals involving only $r-1$ singularities of order 1 (i.e. integrals of the form $\cI_{r-1,(1)}$). The result will then follow by an induction on $r$.
		
		Integrating formula \eqref{eq:derivative_formula} for $\del_{w_r}\Psi_\alpha^t$ on $\Delta^r_t$ with respect to the measure $\prod_{j=1}^{r-1}\frac{|\d w_j|^2}{w_j}|\d w_r|^2$, we get (after an $\epsilon$-regularisation, which is harmless due to the fact that we are on $\Delta^r_t$) for all $F\in\cC$
		\begin{equation}\label{eq:init_rec}
			\begin{aligned}
				&\int_{\Delta^r_t}\E[\Psi_\alpha^t(\boldw)F]\left(\alpha\gamma\del_{w_r}G(w_r,0)+\gamma^2\sum_{j=1}^{r-1}\del_{w_r}G(w_r,w_j)\right)\prod_{j=1}^{r-1}\frac{|\d w_j|^2}{w_j}|\d w_r|^2\\
				&\quad=-\int_{\Delta_t^r}\E\left[\Psi_\alpha^t(\boldw)\nabla F(\del_{w_1}G_\del(w_1,\cdot\,))\right]\prod_{j=1}^{r-1}\frac{|\d w_j|^2}{w_j}|\d w_r|^2\\
				&\qquad+\mu\gamma^2\int_{\Delta_t^{r+1}}\E[\Psi_\alpha^t(\boldw,w_{r+1})F]\prod_{j=1}^{r-1}\frac{|\d w_j|^2}{w_j}\del_{w_r}G(w_r,w_{r+1})|\d w_r|^2|\d w_{r+1}|^2 \\
				&\qquad+\frac{i}{2}\int_{\Delta_t^{r-1}}\int_{\del D_t(w_1,...,w_{r-1})}\E[\Psi_\alpha^t(\boldw)F]\d\bar{w}_r\prod_{j=1}^{r-1}\frac{|\d w_j|^2}{w_j}\\
				&\qquad+\frac{i}{2}\mu\int_{\Delta^r_t}\int_{\partial D(w_r,e^{-t})}\E[\Psi_\alpha^t(\boldw,w_{r+1})F]\d\bar{w}_{r+1}\prod_{j=1}^{r-1}\frac{|\d w_j|^2}{w_j}|\d w_r|^2\\
				&\quad=:\cI_1^t+\cI_2^t+\cI_3^t+\cI_4^t,
			\end{aligned}
		\end{equation}
		where $\cI_1^t$ denotes the first line and so on.

		Let us focus on the singularities coming from the LHS. By permutation symmetry of $\Psi_\alpha^t$ and antisymmetry of $(w_r,w_j)\mapsto\frac{1}{w_r-w_j}$, we have for all $1\leq j\leq r-1$,
		\begin{align*}
			\int_{\Delta_t^2}\Psi_\alpha^t(\boldw)\frac{1}{w_r-w_j}\frac{|\d w_j|^2}{w_j}|\d w_r|^2
			&=\frac{1}{2}\int_{\Delta_t^2}\Psi_\alpha^t(\boldw)\frac{1}{w_r-w_j}\left(\frac{1}{w_j}-\frac{1}{w_r}\right)|\d w_j|^2|\d w_r|^2\\
			&=\frac{1}{2}\int_{\Delta_t^2}\Psi_\alpha^t(\boldw)\frac{|\d w_j|^2}{w_j}\frac{|\d w_r|^2}{w_r}.
		\end{align*}
		Therefore, since $\del_{w_r}G_\D(w_r,0)=-\frac{1}{2w_r}$ and $\del_{w_r}G(w_r,w_1)=-\frac{1}{2}\frac{1}{w_r-w_1}$, we see that the LHS is a multiple of $\cI_{r,(1)}^t$. Namely, \eqref{eq:init_rec} can be rewritten as:
		\begin{equation}\label{eq:show_kac}
			-\frac{\gamma}{2}(\alpha-\alpha_{r,1})\cI_{r,(1)}^t=\cI_1^t+\cI_2^t+\cI_3^t+\cI_4^t,
		\end{equation}
where we recall that $\alpha_{r,1}=\frac{\gamma}{2}(1-r)$.
		
		Let us now study the RHS of \eqref{eq:init_rec}. Recalling \eqref{eq:expand_nabla}, we see that $\cI^t_1$ is treated by the induction hypothesis at rank $r-1$. Using the antisymmetry of $(w_r,w_{r+1})\mapsto\frac{1}{w_r-w_{r+1}}$, we see that $\cI_2^t=0$. Now, we move to $\cI_3^t$. We have $\del D_t(w_1,...,w_{r-1})=\S^1\cup\cC_t(w_1,...,w_{r-1})$, where $\cC_t(w_1,...,w_{r-1})$ is a union of small circles centred at 0, $w_1,...,w_{r-1}$. We write $\cI_3^t=\tilde{\cI}_3^t+\hat{\cI}_3^t$ where $\tilde{\cI}_3^t$ is the contribution of $\S^1$ in $\del D_t(w_1,...,w_{r-1})$, i.e.
  \begin{align*}
			\tilde{\cI}_3^t
			:=\int_{\S^1}\int_{\Delta_t^{r-1}}\E[\Psi_\alpha^t(\boldw)F]\prod_{j=1}^{r-1}\frac{|\d w_j|^2}{w_j}\d\bar{w}_r
		\end{align*}
		By the Girsanov transform, for $w_r\in\S^1$, the change of measure $e^{\gamma\varphi(w_r)-\frac{\gamma^2}{2}G_\del(w_r,w_r)}$ amounts to shifting $\varphi$ by the (deterministic) function $w\mapsto\gamma G_\del(w_r,\cdot)$. Expanding $G_\del(w_r,\cdot)$ in Fourier modes, this means that the modes $\Re(\varphi_n),\Im(\varphi_n)$ are shifted by a deterministic amount, which is $O(1/n)$. Since $F\in\cC$, it depends only on finitely many modes, so the function $\varphi\mapsto F(\varphi+\gamma G_\del(w_r,\cdot))$ also belongs to $\cC$. This implies that $\tilde{\cI}_3^t$ has the same analytic properties as $\int_{\Delta_t^{r-1}}\E[\Psi_\alpha^t(\boldw)F]\prod_{j=1}^{r-1}\frac{|\d w_j|^2}{w_j}\d\bar{w}_r$, i.e. the induction hypothesis at rank $r-1$ applies to $\tilde{\cI}_3^t$. Thus, as a function of $\alpha$, $\tilde{\cI}_3^t$ converges to a limit $\tilde{\cI}_3$ for $\alpha\in\cW_{r-1}$, and $\tilde{\cI}_3$ is analytic in $\cW_{r-1}$. 

  The last remaining terms are $\hat{\cI}_3^t$ and $\cI_4^t$, where we recall that $\hat{\cI}_3^t$ is the contribution of the boundary component $\cC_t(w_1,...,w_{r-1})$ in $\cI_3^t$. Passing to the $\epsilon$-regularisation (i.e. we replace $\Psi_\alpha^t$ with the regularised version $\Psi_\alpha^{t,\epsilon}$ from \eqref{eq:def_psi_eps} in the definition of the integrals), we have that all the other bulk terms in \eqref{eq:init_rec} are uniformly integrable in the region $\cW_{r+1}$. This is then also true for the contribution of $\cC_t(w_1,...,w_{r-1})$. Namely for all $\alpha\in\cW_{r+1}$, writing $\hat{\cI}_3^{t,\epsilon}$ and $\cI_4^{t,\epsilon}$ for the $\epsilon$-regularised integrals,  
  \begin{align*}
\underset{t\to\infty}\lim\,\hat{\cI}_3^t+\cI_4^t=\underset{\epsilon\to0}\lim\,\underset{t\to\infty}\lim\,\hat{\cI}_3^{t,\epsilon}+\cI_4^{t,\epsilon}=0.
\end{align*}
To see that the exchange of limits is licit, observe first that the domain of integration of both $\hat\cI_3^{t,\epsilon}$ and $\cI_4^{t,\epsilon}$ has a total volume of order $e^{-t}$ independently of $\epsilon$ (since it contains a circle of radius $e^{-t}$). Moreover, the integrand is uniformly integrable in the region $\cW_{r+1}$, so the convergence is indeed uniform with respect to $\epsilon$.

		
		Therefore, for all $\alpha\in\cW_{r+1}\subset\cW_{r-1}$, \eqref{eq:show_kac} has a limit as $t\to\infty$, which is given by
		\[-\frac{\gamma}{2}(\alpha-\alpha_{r,1})\cI_{r,(1)}(\alpha)=\cI_1(\alpha)+\tilde{\cI}_3(\alpha),\]
  and the formula extends to $\cW_{r-1}$ by analytic continuation. By the induction hypothesis, this shows that the RHS has a meromorphic continuation to the region $\cW_{0}$, and $\cI_{r,(1)}$ may get an extra pole at $\alpha=\alpha_{1,r}$. This defines a meromorphic extension of $\cI_{r,(1)}$ in the region $\cW_{0}$, with only possible poles at $\{\alpha\mid\alpha=\alpha_{r',1} ,1\leq r'\leq r\}$. Moreover, these poles are at most simple. 
	\end{proof}
	
	\begin{proposition}\label{prop:s_arbitrary}
		Let $(r,\bolds)\in\cT$. The function $\cI_{r,\boldsymbol{s}}:\cW_{r,\boldsymbol{s}}\to\cC'$ extends meromorphically to $\cW_0$. Moreover, the set of its poles is contained in the Kac table.
	\end{proposition}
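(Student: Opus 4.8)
The plan is to prove the proposition by induction following the template of Proposition~\ref{prop:s_is_one}, now on the whole poset $(\cT,\preceq)$: I would run the induction on the level $N=|\bolds|$ and, within each level, on the distance to the minimal partition $(N,(1))$. The base case of each level is $(N,(1))$, which is exactly Proposition~\ref{prop:s_is_one}, with poles contained in $\{\alpha_{r',1}\mid 1\le r'\le N\}\subset kac^-$. So fix a non-minimal $(r,\bolds)$; write $h:=s_1$ for the maximal column height and let $r^*:=\max\{j\mid s_j=s_1\}$, which is also the number of columns of height $h$. The inductive step amounts to establishing a recursion
\begin{equation}\label{eq:rec}
\tfrac{\gamma}{2}\,(\alpha-\alpha_{r^*,h})\,\cI_{r,\bolds}(\alpha)\;=\;\sum_i c_i\,\cJ_i(\alpha)
\end{equation}
(up to a harmless overall sign), an identity in $\cC'$ holding on a common region of absolute convergence, with $\alpha$-independent coefficients $c_i$ and with each $\cJ_i$ an integral of type $(\tilde r_i,\tilde\bolds_i)$ with either $(\tilde r_i,\tilde\bolds_i)\prec(r,\bolds)$ or of strictly lower level, paired with a test function in $\cC$ (namely $F$, some $\del_{-n}F$, or a Girsanov translate of $F$).

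To obtain \eqref{eq:rec} I would take the derivative formula \eqref{eq:derivative_formula} for $\del_{w_{r^*}}\E[\Psi_\alpha^t(\boldw)F]$ (with $F\in\cC$ arbitrary) and integrate it over $\Delta_t^r$ against the measure $w_{r^*}^{-(h-1)}\prod_{j\ne r^*}w_j^{-s_j}\,|\d\boldw|^2$, i.e. the measure defining $\cI_{r,\bolds}^t$ with the exponent on $w_{r^*}$ lowered by one; integrating $\del_{w_{r^*}}$ by parts on the left produces a boundary term plus $(h-1)\cI_{r,\bolds}^t$ coming from $\del_{w_{r^*}}w_{r^*}^{-(h-1)}$. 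On the right I would use $\del_{w_{r^*}}G(w_{r^*},0)=-\tfrac{1}{2w_{r^*}}$ and run the symmetrisation trick (antisymmetry of $(w_{r^*},w_j)\mapsto(w_{r^*}-w_j)^{-1}$ against the $\kS_r$-invariance of $\Psi_\alpha^t$): the contributions of the $w_j$ with $s_j=h$ reassemble into multiples of $\cI_{r,\bolds}^t$, those with $s_j=h-1$ cancel, and those with $s_j\le h-2$ become integrals for partitions obtained from $(r,\bolds)$ by moving boxes from column $r^*$ to a later, strictly shorter column. Collecting the $\cI_{r,\bolds}^t$-terms, the net coefficient is a nonzero multiple of $\alpha-\alpha_{r^*,h}$, because $\alpha_{r^*,h}=-\tfrac{(r^*-1)\gamma}{2}-\tfrac{2(h-1)}{\gamma}$, which is exactly the Kac label appearing in \eqref{eq:rec}. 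Among the remaining terms in \eqref{eq:derivative_formula}: the $\nabla F$-term, expanded via \eqref{eq:expand_nabla}, lowers the exponent on $w_{r^*}$ by some $n\ge1$ and replaces $F$ by $\del_{-n}F\in\cC$, hence drops the level; the bulk GMC-term symmetrises (exactly as its analogue did in \eqref{eq:init_rec}) into integrals carrying one extra column of height $\le h-1$, i.e. level-$N$ partitions with strictly fewer maximal-height columns, corresponding to the first family of elementary moves of Section~\ref{subsec:partitions}; the boundary term over $\S^1$ reduces, via a Girsanov shift of $\varphi$ that keeps us in $\cC$, to a strictly-lower-level integral; and the boundary integrals over the circles of radius $e^{-t}$ vanish as $t\to\infty$ by the same argument as in Proposition~\ref{prop:s_is_one}, using the $\epsilon$-regularisation, uniform integrability on a fixed neighbourhood (Propositions~\ref{prop:regular_rectangle} and \ref{lem:integrable}), the $O(e^{-t})$ volume of those domains, and the licitness of exchanging $\epsilon\to0$ with $t\to\infty$.

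Granting \eqref{eq:rec}, the proof ends quickly: by the induction hypothesis each $\cJ_i$ extends meromorphically to $\cW_0$ with poles in the Kac table, so dividing by $\alpha-\alpha_{r^*,h}$ extends $\cI_{r,\bolds}$ meromorphically to $\cW_0$ with poles contained in $kac\cup\{\alpha_{r^*,h}\}=kac$; since $F$ was arbitrary this proves the claim for $(r,\bolds)$ and closes the induction. The step I expect to be the real obstacle is the combinatorial bookkeeping behind \eqref{eq:rec}: one has to check that the symmetrisation of the $\del_{w_{r^*}}G(w_{r^*},w_j)$-terms and of the bulk GMC-term produces \emph{only} integrals attached to partitions strictly below $(r,\bolds)$ in $\preceq$ or of strictly lower level --- so that the induction is well-founded --- while the coefficient of $\cI_{r,\bolds}$ is exactly a nonzero multiple of $\alpha-\alpha_{r^*,h}$ with $(r^*,h)$ a genuine Kac label; this is precisely what the elementary moves $\tau_j$ of Section~\ref{subsec:partitions} are designed to encode. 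By contrast, the analytic inputs (regions of absolute convergence, vanishing of the $e^{-t}$-circles, Fubini and the limit exchanges) are essentially inherited from Proposition~\ref{prop:s_is_one}.
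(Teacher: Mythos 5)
Your proposal is correct and follows essentially the same route as the paper: integrate the derivative formula \eqref{eq:derivative_formula} against the measure with the exponent on a maximal-height column lowered by one, integrate by parts to recover $(s_1-1)\cI_{r,\bolds}^t$, symmetrise the $\del G(w,w_j)$ and bulk GMC terms to isolate the factor $-\frac{\gamma}{2}(\alpha-\alpha_{r^*,s_1})$ in front of $\cI_{r,\bolds}^t$ and to push everything else to partitions strictly lower in $\preceq$ or of strictly lower level, then divide. The only (cosmetic) differences are that you differentiate in $w_{r^*}$ rather than $w_1$ (immaterial by the permutation symmetry of the equal-height columns) and that you make explicit the double induction on level and then on distance to the minimal partition, which the paper leaves implicit when invoking the induction hypothesis for the lower-level terms $\cJ_2$ and $\tilde{\cJ}_4$.
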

	\begin{proof}
		
		We will prove the result by induction on $\cT_{|\bolds|}$, i.e. by induction on the distance to the minimal partition as explained in Section \ref{subsec:partitions}. The initialisation holds by the previous proposition since $(|\bolds|,(1))$ is the minimal partition in $\cT_{|\bolds|}$. Now we assume $s_1\geq 2$. For the heredity, we will show that $\cI_{r,\boldsymbol{s}}$ can be expressed as a sum of integrals of strictly lower type. Recall $r^*=\max\{j\mid s_j=s_1\}$ 
		
		By an integration by parts, we get for all $(w_2,...,w_r)\in\Delta_t^{r-1}$
		\begin{align*}
			(s_1-1)\int_{D_t(w_2,...,w_r)}\E[\Psi_\alpha^t(\boldw)F]\frac{|\d w_1|^2}{w_1^{s_1}}
			&=\frac{i}{2}\int_{\del D_t(w_2,...,w_r)}\E[\Psi_\alpha^t(\boldw)F]\frac{\d\bar{w}_1}{w_1^{s_1-1}}\\
			&\quad+\int_{D_t(w_2,...,w_r)}\del_{w_1}\E[\Psi_\alpha^t(\boldw)F]\frac{|\d w_1|^2}{w_1^{s_1-1}}
		\end{align*}
		Combining with \eqref{eq:derivative_formula} integrated with respect to $\frac{|\d w_1|^2}{w_1^{s_1-1}}\prod_{j=2}^r\frac{|\d w_j|^2}{w_j^{s_j}}$, we get
		\begin{equation}\label{eq:rec}
			\begin{aligned}
				&\int_{\Delta_t^r}\left({-}\frac{s_1-1}{w_1}+\alpha\gamma\del_{w_1}G(w_1,0)+\gamma^2\sum_{j=2}^{r^*}\del_{w_1}G(w_1,w_j)\right)\E[\Psi_\alpha^t(\boldw)F]\frac{|\d w_1|^2}{w_1^{s_1-1}}\prod_{k=2}^r\frac{|\d w_k|^2}{w_k^{s_k}}\\
				&\quad=-\gamma^2\sum_{j=r^*+1}^r\int_{\Delta_t^r}\E[\Psi_\alpha^t(\boldw)F]\del_{w_1}G(w_1,w_j)\frac{|\d w_1|^2}{w_1^{s_1-1}}\prod_{k=2}^r\frac{|\d w_k|^2}{w_k^{s_k}}\\
				&\qquad+\int_{\Delta_t^r}\E\left[\Psi_\alpha^t(\boldw)\nabla F(\del_{w_1}G_\del(w_1,\cdot\,))\right]\frac{|\d w_1|^2}{w_1^{s_1-1}}\prod_{k=2}^r\frac{|\d w_k|^2}{w_k^{s_k}}\\
				&\qquad-\mu\gamma^2\int_{\Delta_t^{r+1}}\E[\Psi_\alpha^t(\boldw)F]\del_{w_1}G(w_1,w_{r+1})\frac{|\d w_1|^2}{w_1^{s_1-1}}\prod_{k=2}^r\frac{|\d w_k|^2}{w_k^{s_k}}|\d w_{r+1}|^2\\
				&\qquad-\frac{i}{2}\int_{\del D_t(w_2,...,w_r)}\int_{\Delta_t^{r-1}(w_1)}\E[\Psi_\alpha^t(\boldw)F]\prod_{k=2}^r\frac{|\d w_k|^2}{w_k^{s_k}}\frac{\d\bar{w}_1}{w_1^{s_1-1}}\\
				&\qquad-\frac{i}{2}\mu\int_{\Delta^r_t}\int_{\partial D(w_1,e^{-t})}\E[\Psi_\alpha^t(\boldw,w_{r+1})F]\d\bar{w}_{r+1}\frac{|\d w_1|^2}{w_1^{s_1-1}}\prod_{j=2}^r\frac{|\d w_j|^2}{w_j^{s_j}}\\
				&\quad=:\cJ_1^t+\cJ_2^t+\cJ_3^t+\cJ_4^t+\cJ_5^t.
			\end{aligned}
		\end{equation}
		Let us first focus on the LHS. As in the proof of Proposition \ref{prop:s_is_one}, we can symmetrise the terms involving $\del_{w_1}G(w_1,w_j)=-\frac{1}{2}\frac{1}{w_1-w_j}$ for $1\leq j\leq r^*$ to see that the LHS is a multiple of $\cI_{r,\bolds}^t$. Namely, \eqref{eq:rec} can be written as
		\begin{equation}\label{eq:show_kac_bis}
			-\frac{\gamma}{2}(\alpha-\alpha_{r^*,s_1})\cI^t_{r,\bolds}=\cJ_1^t+\cJ_2^t+\cJ_3^t+\cJ_4^t+\cJ_5^t.
		\end{equation}

		Next, we study the integrals appearing in the RHS of \eqref{eq:rec}, starting with $\cJ_1^t$. If $r^*=r$, the sum defining $\cJ_1^t$ is empty, so we assume $r^*<r$. For $r^*<j\leq r$, by symmetrising $\del_{w_1}G(w_1,w_j)=-\frac{1}{2}\frac{1}{w_1-w_j}$ and using $s_1<s_j$, we get
		\begin{align*}
			&\int_{\Delta_t^2}\Psi_\alpha^t(\boldw)\frac{1}{w_1-w_j}\frac{|\d w_1|^2}{w_1^{s_1-1}}\frac{|\d w_j|^2}{w_j^{s_j}}\\
			&\quad=\frac{1}{2}\int_{\Delta_t^2}\Psi_\alpha^t(\boldw)\frac{1}{w_1-w_j}\left(\frac{1}{w_1^{s_1-1}w_j^{s_j}}-\frac{1}{w_1^{s_j}w_j^{s_1-1}}\right)|\d w_1|^2|\d w_j|^2\\
			&\quad=\frac{1}{2}\int_{\Delta_t^2}\Psi_\alpha^t(\boldw)\frac{1}{w_1^{s_1-1}w_j^{s_1-1}}\frac{w_j^{s_1-s_j-1}-w_1^{s_1-s_j-1}}{w_1-w_j}|\d w_1|^2|\d w_j|^2\\
			&\quad=\frac{1}{2}\sum_{l=1}^{s_1-s_j-1}\int_{\Delta_t^2}\Psi_\alpha^t(\boldw)\frac{|\d w_1|^2}{w_1^{s_1-l}}\frac{|\d w_j|^2}{w_j^{s_j+l}}.
		\end{align*}
 From this formula (and permutation symmetry of the $w_j$'s), it is straightforward to check that all $\cJ_1^t$ is a sum of integrals of type $(r',\bolds')\prec(r,\bolds)$. Therefore, $\cJ_1^t$ is absolutely convergent is a region $\cW_{r',\bolds'}$ for some $(r',\bolds')\prec(r,\bolds)$, and the limit $\cJ_1$ is analytic in this region. 
 
 Recalling \eqref{eq:expand_nabla},  $\cJ_2^t$ is a sum of singular integral of type $\cI_{(r,\mathbf{s}')}$ with $\mathbf{s}'=(s_1-k,s_2,..,s_r)$ for $1\leq k \leq s_1$. We denote its limit by $\cJ_2$, which is absolutely convergent and analytic in the region $\cW_{r',\bolds'}$ for some $(r',\bolds')\prec(r,\bolds)$. 
		
		For $\cJ_3^t$, we use the following symmetrisation and the fact that $s_1\geq2$:
		\begin{align*}
			\int_{\Delta_t^2}\Psi_\alpha^t(\boldw)\frac{1}{w_1-w_{r+1}}\frac{|\d w_1|^2}{w_1^{s_1-1}}|\d w_{r+1}|^2
			&=\frac{1}{2}\int_{\Delta_t^2}\Psi_\alpha^t(\boldw)\frac{1}{w_1-w_j}\left(\frac{1}{w_1^{s_1-1}}-\frac{1}{w_{r+1}^{s_1-1}}\right)|\d w_1|^2|\d w_{r+1}|^2\\
			&=\frac{1}{2}\sum_{l=1}^{s_1-1}\int_{\Delta_t^2}\Psi_\alpha^t(\boldw)\frac{|\d w_1|^2}{w_1^{s_1-l}}\frac{|\d w_{r+1}|^2}{w_{r+1}^{l}}.
		\end{align*}
  Again, this shows that $\cJ_3^t$ converges absolutely to an analytic limit in the region $\cW_{r',\bolds'}$ for some $(r',\bolds')\prec(r,\bolds)$.
		
Next, we treat $\cJ_4^t$, arguing as in the proof of Proposition \ref{prop:s_is_one} for the treatment of $\cI_3^t$. Namely, we write $\cJ_4^t=\tilde{\cJ}_4^t+\hat{\cJ}_4^t$, where $\tilde{\cJ}_4^t$ is the contribution of the unit circle (resp. $\cC_t(w_2,...,w_r)$) coming from $\del D(w_2,...,w_r)$, and $\hat{\cJ}_4^t$ is the contribution of the circles of radius $e^{-t}$ around $0,w_2,...,w_r$. Arguing similarly as in the previous proof, we get that $\tilde{\cJ}_4^t$ converges absolutely to an analytic limit $\tilde{\cJ}_4$ in the region $\cW_{r',\bolds'}$ for some $(r',\bolds')\prec(r,\bolds)$.

Finally, arguing as in the proof of Proposition \ref{prop:s_is_one} (for the treatment of $\hat{\cI}_3^t+\cI_4^t$), we find that $\hat{\cJ}_4^t+\cJ_5^t\to0$ as $t\to\infty$ in the region $\cW_{(r',\bolds')}$ for some $(r',\bolds')\prec(r,\bolds)$. To conclude, we have shown that there is $(r',\bolds')\prec(r,\bolds)$ such that the following convergence holds absolutely in the region $\cW_{r',\bolds'}$.

		
		\begin{equation}\label{eq:extension_mero}
			-\frac{\gamma}{2}(\alpha-\alpha_{r^*,s_1})\cI_{r,\bolds}(\alpha)=\cJ_1(\alpha)+\cJ_2(\alpha)+\cJ_3(\alpha)+\tilde{\cJ}_4(\alpha),
		\end{equation}
and the limit is analytic as a function of $\alpha$. It follows from the induction hypothesis that the RHS extends meromorphically to $\cW_0$, with possible poles on the Kac table. Therefore, $\cI_{r,\bolds}$ also extends meromorphically to $\cW_0$ and has the same poles as the RHS, possibly augmented with $\alpha_{r^*,s_1}$ (which belongs to the Kac table).
	\end{proof}

	%
	%
	
	\subsection{Study on the Kac table}\label{subsec:kac}
	We have shown that $\cI_{r,\boldsymbol{s}}$ extends meromorphically to a complex neighbourhood of $\{\alpha<Q\}$, with possible poles on the Kac table. In this section, we prove some special cases of Theorem \ref{thm:poisson}. Later in Section \ref{subsec:proof_singular}, we will combine these results with representation-theoretic information to deduce Theorems \ref{thm:poisson} \& \ref{thm:singular} in all generality. 

	In the course of our analysis, we will need to treat the case $\gamma^2\not\in\Q$ first. The extension to all $\gamma\in(0,2)$ will be handled by a continuity argument, which is contained in Proposition \ref{prop:gamma_continuous}. Whenever necessary, we will make the $\gamma$-dependence explicit by writing $\cI_{r,\bolds}^\gamma(\alpha)=\cI_{r,\bolds}(\alpha)$.
	
	We will denote $\mathcal{P}_{r, \boldsymbol{s}} \subset k a c^{-}$ as the set of poles of $\mathcal{I}_{r, \boldsymbol{s}}$. It follows from \eqref{eq:extension_mero} that for all $(r, \bolds) \in \mathcal{T}_N$, we have
	$$
	\mathcal{P}_{r, \boldsymbol{s}} \subset\left\{\alpha_{r^*, s_1}\right\} \bigcup\left(\underset{(\widetilde{r}, \tilde{\boldsymbol{s}}) \prec(r, \boldsymbol{s})}{\bigcup} \mathcal{P}_{\widetilde{r}, \widetilde{\boldsymbol{s}}}\right),
	$$
		\begin{proposition}\label{cor:level_rs}
		Suppose $\gamma^2\not\in\Q$, and let $r,s\in\N^*$. For all partitions $(\tilde{r},\tilde{\bolds})\in\cT_{n}$ with $n\leq rs$, the map $\alpha\mapsto\cI_{\tilde{r},\tilde{\bolds}}(\alpha)$ is regular at $\alpha_{r,s}$.
	\end{proposition}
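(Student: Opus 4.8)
The plan is to argue by a double induction over the partitions $(\tilde r,\tilde\bolds)$ of level $n\le rs$, organised so as to treat all levels $<rs$ first, then the single rectangular partition $(r,(s))$, and finally the remaining partitions of level $rs$. The only input coming from $\gamma^2\notin\Q$ is that $\tfrac{\gamma}{2}$ and $\tfrac{2}{\gamma}$ are then linearly independent over $\Q$ (a rational relation between them would make $\gamma^2$ rational), so that $(a,b)\mapsto\alpha_{a,b}$ is injective on $(\N^*)^2$. Consequently, in the recursion \eqref{eq:extension_mero} (or \eqref{eq:show_kac} when $\tilde\bolds=(1)$) the prefactor can attach the pole $\alpha_{r,s}$ to $\cI_{\tilde r,\tilde\bolds}$ only when $\tilde r^*=r$ and $\tilde s_1=s$; since $\tilde\bolds$ is non-increasing, $\tilde r^*=\tilde r$ forces $\tilde\bolds$ rectangular, so the only partition whose own elementary step can create the pole $\alpha_{r,s}$ is $(\tilde r,\tilde\bolds)=(r,(s))$, which has level exactly $rs$.

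First I would dispose of all partitions of level $<rs$. The base cases $(n,(1))$ with $n<rs$ are handled by Proposition~\ref{prop:s_is_one}: their poles lie in $\{\alpha_{r'',1}:r''\le n\}$, and $\alpha_{r'',1}=\alpha_{r,s}$ would force $s=1$ and $r=r''\le n<rs=r$, a contradiction. For a general $(\tilde r,\tilde\bolds)$ of level $n<rs$, the recursion \eqref{eq:extension_mero} expresses $\cI_{\tilde r,\tilde\bolds}$ as the product of the factor $-\tfrac{\gamma}{2}(\alpha-\alpha_{\tilde r^*,\tilde s_1})^{-1}$ --- regular at $\alpha_{r,s}$ by the previous remark, since a partition of level $<rs$ is not $(r,(s))$ --- with a combination of integrals of strictly lower type (of level $<n$, or of level $n$ but $\prec(\tilde r,\tilde\bolds)$), all regular at $\alpha_{r,s}$ by the induction hypothesis on $\cT_n$ and on lower levels. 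Hence every $\cI_{\tilde r,\tilde\bolds}$ with $|\tilde\bolds|<rs$ is regular at $\alpha_{r,s}$.

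Next comes the rectangular partition $(r,(s))$, which is the heart of the matter; here the recursion alone is insufficient, since for $(r,(s))$ one has $r^*=r$, so $\cJ_1$ in \eqref{eq:extension_mero} is empty and we only learn that $\cI_{r,(s)}$ has \emph{at most} a simple pole at $\alpha_{r,s}$. To rule the pole out I would use the probabilistic formula for descendants, Lemma~\ref{lem:expression_descendants}, applied to $\bL_{-s}^r\Psi_\alpha$: it writes this descendant as $c_0\,\cI_{r,(s)}(\alpha)$, with $c_0$ an explicit non-zero constant, plus a combination, with coefficients analytic near $\alpha_{r,s}$, of integrals $\cI_{r',\bolds'}$ of level strictly less than $rs$, all of which are regular at $\alpha_{r,s}$ by the previous step. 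On the other hand $\bL_{-s}^r\Psi_\alpha$ is itself analytic near $\alpha_{r,s}$: inverting the polynomial matrix $m^N(2Q-\cdot)$ ($N=rs$) in the identity $\Phi_\alpha(\pi_{\mathbf{k}})=\sum_{|\nu|=N}M_{\mathbf{k},\nu}(2Q-\alpha)\,\bL_{-\nu}\Psi_\alpha$ gives $\bL_{-\nu}\Psi_\alpha=\sum_{\mathbf{k}}m_{\nu,\mathbf{k}}(2Q-\alpha)\,\Phi_\alpha(\pi_{\mathbf{k}})$, which is analytic at $\alpha_{r,s}$ because $\Phi_\alpha(\pi_{\mathbf{k}})$ is analytic on $\C$ by \cite[Theorem 4.4]{BGKRV22} and $m_{\nu,\mathbf{k}}$ is a polynomial. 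Combining the two, $c_0\,\cI_{r,(s)}(\alpha)=\bL_{-s}^r\Psi_\alpha-(\text{regular at }\alpha_{r,s})$ is regular at $\alpha_{r,s}$, hence so is $\cI_{r,(s)}$, with no pole.

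Finally, for $(\tilde r,\tilde\bolds)\in\cT_{rs}$ with $(\tilde r,\tilde\bolds)\ne(r,(s))$ I would run the inner induction on the distance to the minimal partition $(rs,(1))$: the factor $-\tfrac{\gamma}{2}(\alpha-\alpha_{\tilde r^*,\tilde s_1})^{-1}$ in \eqref{eq:extension_mero} is now regular at $\alpha_{r,s}$ by the first paragraph, and the right-hand side is a combination of integrals of level $<rs$ (regular by the second step) and of integrals of level $rs$ that are $\prec(\tilde r,\tilde\bolds)$ --- hence either already treated by the inner induction or equal to $(r,(s))$, which was settled in the third step --- so $\cI_{\tilde r,\tilde\bolds}$ is regular at $\alpha_{r,s}$ as well. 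I expect the main obstacle to be exactly the rectangular case: one has to marry the analyticity of the Liouville descendants $\bL_{-\nu}\Psi_\alpha$ (which rests on the reflection formula of \cite{BGKRV22}, \emph{not} on the recursion) with the precise leading-term structure of Lemma~\ref{lem:expression_descendants}, because the naive change of basis from integrals to descendants degenerates precisely at $\alpha_{r,s}$ --- this degeneracy being the analytic shadow of the Kac singular vector.
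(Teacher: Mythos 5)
Your overall architecture (use the recursions \eqref{eq:init_rec}, \eqref{eq:rec} to reduce everything to the rectangular partition, and use $\gamma^2\notin\Q$ to ensure the prefactor $(\alpha-\alpha_{\tilde r^*,\tilde s_1})$ can only vanish at $\alpha_{r,s}$ when $(\tilde r,\tilde\bolds)=(r,(s))$) is exactly the paper's, and your steps 1, 2 and 4 are fine. The gap is in your treatment of the rectangular partition, which you correctly single out as the heart of the matter but then handle incorrectly. The descendant $\bL_{-s}^r\Psi_\alpha=\cP_\alpha\bigl((\bL_{-s}^{0,\alpha})^r\ind\bigr)$ does \emph{not} decompose as $c_0\,\cI_{r,(s)}$ plus integrals of level $<rs$: the polynomial $(\bL_{-s}^{0,\alpha})^r\ind$ contains in general every monomial of level $rs$, so via Lemma \ref{lem:expression_descendants} the descendant is a combination of $\cI_{\tilde r,\tilde\bolds}$ over \emph{all} partitions $(\tilde r,\tilde\bolds)\in\cT_{rs}$, plus lower levels. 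Already at level $2$, Example \ref{ex:level_two} gives $(\bL_{-1}^{0,\alpha})^2\ind=2\alpha\varphi_2+\alpha^2\varphi_1^2$, so $\bL_{-1}^2\Psi_\alpha$ involves both $\cI_{1,(2)}$ and $\cI_{2,(1,1)}$ with non-vanishing coefficients at $\alpha_{2,1}=-\gamma/2$. This makes your step 3 circular with your step 4 (which needs $(r,(s))$ settled first), and the circularity cannot be broken by a cleverer linear combination of descendants: the change-of-basis matrix $m^{rs}(\alpha)$ between monomials and descendants is singular precisely at $\alpha_{r,s}$ (Kac determinant), so the analytic family $\{\bL_{-\nu}\Psi_\alpha\}_{|\nu|=rs}$ only controls a proper subspace of the level-$rs$ integrals there and cannot isolate $\cI_{r,(s)}$ --- this is exactly the degeneracy you flag in your last sentence, and it is fatal to the argument rather than a technical nuisance.

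The fix is much simpler and uses no representation theory: $\alpha_{r,s}$ lies strictly inside the region of \emph{absolute convergence} of $\cI_{r,(s)}$. Indeed, Proposition \ref{prop:regular_rectangle} applied to the rectangular partition $(r,(s,\dots,s))$, for which $|\bolds|/(2r)=s/2$, gives analyticity of $\alpha\mapsto\cI_{r,(s)}(\alpha)$ on a neighbourhood of $(Q-r\gamma-\beta,\alpha_{r,s/2})$, and $\alpha_{r,s}<\alpha_{r,s/2}$ since $s\mapsto\alpha_{r,s}$ is decreasing. This is the observation recorded in the remark following Proposition \ref{prop:regular_rectangle}, and it is the point where the fusion estimate encodes the Kac table: the rectangular singular integral converges absolutely at its own Kac weight. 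With this replacing your step 3, your remaining steps go through and reproduce the paper's proof.
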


	\begin{proof}
		First, since $\alpha_{r,s}<Q-(\frac{\gamma}{2}r+\frac{2}{\gamma}\frac{s}{2})$, Proposition \ref{prop:regular_rectangle} gives that the integral $\cI_{r,(s)}$ is regular at $\alpha_{r,s}$.
		
		Let $(\tilde{r},\tilde{\bolds})\in\cT_{n}$ arbitrary with $n\leq rs$. By the induction formulae \eqref{eq:init_rec}, \eqref{eq:rec}, we can express $(\alpha-\alpha_{\tilde{r}^*,\tilde{s}_1})\cI_{\tilde{r},\tilde{\bolds}}$ in terms of integrals of lower type. We can iterate the formula until we reach either an integral of type $(r,(s))$ or of empty type, which are both analytic at $\alpha=\alpha_{r,s}$ in virtue of Lemma \ref{lem:integrable} and Proposition \ref{prop:regular_rectangle}. Since $\gamma^2\notin \Q$, the numbers $\{\alpha_{r',s'}|\,r',s'\in\N^*\}$ are pairwise distinct, the possible poles of $\cI_{r,\bolds}$ obtained by this procedure are all distinct from $\alpha_{r,s}$. 
	\end{proof}

	\begin{corollary}\label{prop:regular_descendants}
		Suppose $\gamma^2\not\in\Q$, and let $r,s\in\N^*$. For all partitions $(\tilde{r},\tilde{\bolds})\in\cT_{rs}$ and all $\nu\in\cT$, the map $\alpha\mapsto\bL_{-\nu}\cI_{\tilde{r},\tilde{\bolds}}(\alpha)\in e^{-\beta c}\cD(\cQ)$ is regular at $\alpha=\alpha_{r,s}$.
	\end{corollary}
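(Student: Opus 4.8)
The plan is to read off the statement from Proposition~\ref{cor:level_rs} together with the single extra fact that the Liouville Virasoro generators $\bL_n$ are \emph{fixed} operators, not depending on the insertion parameter~$\alpha$.

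First I would note that a partition $(\tilde r,\tilde{\bolds})\in\cT_{rs}$ has level $n=rs$, which trivially satisfies $n\leq rs$, so Proposition~\ref{cor:level_rs} applies directly: the map $\alpha\mapsto\cI_{\tilde r,\tilde{\bolds}}(\alpha)$ is analytic in a complex neighbourhood of $\alpha_{r,s}$, with values in $e^{-\beta c}\cD(\cQ)$, where $\beta>0$ is fixed large enough to serve simultaneously for all partitions of level $\leq rs$ (as explained after Proposition~\ref{prop:regular_rectangle}).

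Next I would invoke the mapping properties of the representation $(\bL_n)_{n\in\Z}$ from \cite{BGKRV22}: each $\bL_n$ extends to a bounded linear operator $e^{-\beta c}\cD(\cQ)\to e^{-\beta' c}\cD(\cQ)$ for a suitable $\beta'=\beta'(\beta,n)$, and --- this is the point --- it is the same operator for every $\alpha$. Hence, for a fixed Young diagram $\nu=(\nu_1,\dots,\nu_\ell)$, the composition $\bL_{-\nu}=\bL_{-\nu_\ell}\cdots\bL_{-\nu_1}$ is again a fixed, $\alpha$-independent, bounded operator between weighted spaces $e^{-\beta c}\cD(\cQ)\to e^{-\beta'' c}\cD(\cQ)$ (enlarging the weight if needed being harmless). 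Applying this operator to the analytic family from the previous paragraph, the map $\alpha\mapsto\bL_{-\nu}\cI_{\tilde r,\tilde{\bolds}}(\alpha)$ is analytic in a neighbourhood of $\alpha_{r,s}$, i.e.\ regular there, which is exactly the assertion.

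I would emphasise \emph{why} one should argue this way rather than through the intertwining identity $\bL_{-\nu}\cP_\alpha(\chi)=\cP_\alpha(\bL_{-\nu}^{0,\alpha}\chi)$: that identity presents $\bL_{-\nu}\cI_{\tilde r,\tilde{\bolds}}$ as a Poisson evaluation at level $rs+|\nu|$, strictly above $rs$, and there Proposition~\ref{cor:level_rs} provides no information, so a pole at $\alpha_{r,s}$ could in principle re-enter through that route; by contrast, applying a fixed bounded operator to an analytic family can never manufacture a singularity. The only genuinely non-routine ingredient is thus the continuity input from \cite{BGKRV22}: one must know that $\bL_n$ sends $e^{-\beta c}\cD(\cQ)$ continuously \emph{back into} a weighted space of the same type (not merely into a weaker dual), and that the singular integrals $\cI_{\tilde r,\tilde{\bolds}}(\alpha)$ lie in the subspace on which this holds --- which is where I would expect to spend most of the effort, although it can alternatively be checked directly from the explicit probabilistic expression for $\bL_n$ (of which \eqref{eq:derivative_formula} is the $n=-1$ case), at the price of a more laborious argument carried out on the $t$- and $\epsilon$-regularised integrals before letting $\alpha\to\alpha_{r,s}$.
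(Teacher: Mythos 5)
Your reduction correctly identifies the crux, but the key mapping property you rely on is not available, and in fact fails. The paper (and \cite{BGKRV22}) only provide that each $\bL_n$ is bounded from $e^{-\beta c}\cD(\cQ)$ into the \emph{dual} $e^{-\beta c}\cD'(\cQ)$, not back into a weighted form domain. This is not a gap in the literature that one could hope to fill by a more careful estimate: already $\bL_0$ is essentially $\tfrac12\bH$ plus a spin term, and $\bH$ maps $\cD(\cQ)=\cD(\bH^{1/2})$ onto $\cD(\bH^{-1/2})=\cD'(\cQ)$ rather than into $\cD(\bH^{1/2})$, so no operator of the form $\bL_n:e^{-\beta c}\cD(\cQ)\to e^{-\beta'c}\cD(\cQ)$ bounded can exist. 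Your proposal therefore collapses at exactly the step you flagged as ``where most of the effort would be spent''; the alternative you sketch (checking the stronger mapping property from the probabilistic expression for $\bL_n$) is not carried out and cannot succeed in the form stated.

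The paper's proof circumvents this obstruction by an induction on the length of $\nu$, adding one generator at a time. At each step, boundedness of $\bL_{-n}$ into the dual yields analyticity of $\alpha\mapsto\bL_{-\tilde\nu}\cI_{\tilde r,\tilde{\bolds}}(\alpha)$ only with values in the weak space $e^{-\beta c}\cD'(\cQ)$; the upgrade to $e^{-\beta c}\cD(\cQ)$ is then obtained from Cauchy's integral formula
\[\bL_{-\tilde{\nu}}\cI_{\tilde{r},\tilde{\bolds}}(\alpha_{r,s})=\frac{1}{2i\pi}\oint\bL_{-\tilde{\nu}}\cI_{\tilde{r},\tilde{\bolds}}(\alpha)\frac{\d\alpha}{\alpha-\alpha_{r,s}},\]
using that the integrand lies in $e^{-\beta c}\cD(\cQ)$ for $\alpha$ on the contour (away from the Kac table), which is supplied by the Verma-module structure of \cite[Theorem 4.5]{BGKRV22}. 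So the representation-theoretic input you dismiss as a dangerous detour is in fact the ingredient that saves the argument: it is not used to re-express $\bL_{-\nu}\cI_{\tilde r,\tilde{\bolds}}$ as a higher-level Poisson evaluation (which, as you rightly note, would lose control of the pole), but only to certify membership of the descendants in the strong space off the singular point, after which analyticity at $\alpha_{r,s}$ in the strong space follows. You should replace the claimed boundedness into a form domain by this two-step (weak analyticity, then Cauchy upgrade) argument.
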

	\begin{proof}
		We prove this by induction on the length of $\nu$.
		
		The case $\nu=\emptyset$ is the content of Proposition \ref{cor:level_rs}.
		
		Suppose the result holds for all partitions of length $\ell$, for some $\ell\in\N$. Let $\nu_1\geq...\geq\nu_\ell$ be a partition of length $\ell$. Let $n\geq\nu_1$, and let $\tilde{\nu}=(n,\nu_1,...,\nu_\ell)$. By the induction hypothesis, the family $\bL_{-\nu}\cI_{\tilde{r},\tilde{\bolds}}(\alpha)\in e^{-\beta c}\cD(\cQ)$ is analytic in a neighbourhood $\cU$ of $\alpha=\alpha_{r,s}$. Since $\bL_{-n}:e^{-\beta c}\cD(\cQ)\to e^{-\beta c}\cD'(\cQ)$ is bounded, we get that $\bL_{-\tilde{\nu}}\cI_{\tilde{r},\tilde{\bolds}}:\cU\to e^{-\beta c}\cD'(\cQ)$. Moreover, we know that the family $\bL_{-\tilde{\nu}}\cI_{\tilde{r},\tilde{\bolds}}:\cU\setminus\{\alpha_{r,s}\}\to e^{-\beta c}\cD(\cQ)$ is analytic. By Cauchy's integral formula, we then have the following equality in $e^{-\beta c}\cD'(\cQ)$:
		\[\bL_{-\tilde{\nu}}\cI_{\tilde{r},\tilde{\bolds}}(\alpha_{r,s})=\frac{1}{2i\pi}\oint\bL_{-\tilde{\nu}}\cI_{\tilde{r},\tilde{\bolds}}(\alpha)\frac{\d\alpha}{\alpha-\alpha_{r,s}},\]
		where the integral is over a loop around $\alpha_{r,s}$ contained in $\cU$. This shows that $\bL_{-\tilde{\nu}}\cI_{\tilde{r},\tilde{\bolds}}(\alpha_{r,s})$ is actually well-defined in $e^{-\beta c}\cD(\cQ)$, and the map $\alpha\mapsto\cI_{\tilde{r},\tilde{\bolds}}(\alpha)$ is regular at $\alpha_{r,s}$ with values in $e^{-\beta c}\cD(\cQ)$.
	\end{proof}
	
	The final results of this section provide the continuity property that will allow us to extend the results from $\gamma^2\not\in\Q$ to arbitrary $\gamma\in(0,2)$. Essentially, we will prove that the singular integrals $\cI_{r,\bolds}$ are continuous with respect to $\gamma\in(0,2)$, with values in $\cC'$. We record the following elementary for reference purposes.
	
	\begin{lemma}\label{lem:cts}
		Let $F$ be a Fr\'echet space, and $(f_n)_{n\geq0}$ be a bounded sequence in $C^1([0,1];F)$ such that $f_n$ converges pointwise to some function $f:[0,1]\to F$. Then, $f$ is continuous on $[0,1]$.
	\end{lemma}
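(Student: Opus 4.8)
The plan is to show that $f$ is Lipschitz with respect to each continuous seminorm of $F$, the Lipschitz constant being inherited from the equi-boundedness of the derivatives $f_n'$. Fix an increasing sequence of seminorms $(\rho_k)_{k\geq1}$ generating the topology of $F$, and recall that the Fréchet topology on $C^1([0,1];F)$ is generated by the seminorms $\|g\|_k:=\sup_{t\in[0,1]}\rho_k(g(t))+\sup_{t\in[0,1]}\rho_k(g'(t))$. Since $(f_n)_{n\geq0}$ is bounded in $C^1([0,1];F)$, for each $k$ there is a finite constant $M_k$ with $\sup_n\sup_{t\in[0,1]}\rho_k(f_n'(t))\leq M_k$.

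The key step is the vector-valued fundamental theorem of calculus: for $g\in C^1([0,1];F)$ and $0\leq s\leq t\leq1$ one has $g(t)-g(s)=\int_s^t g'(u)\,\d u$, the integral being the Riemann integral of a continuous $F$-valued function on a compact interval (well defined since $F$ is complete), and hence $\rho_k(g(t)-g(s))\leq\int_s^t\rho_k(g'(u))\,\d u\leq|t-s|\sup_{u\in[0,1]}\rho_k(g'(u))$. Applying this to $g=f_n$ yields the equi-Lipschitz bound $\rho_k(f_n(t)-f_n(s))\leq M_k|t-s|$, valid for all $n\geq0$ and all $s,t\in[0,1]$.

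It remains to pass to the limit. Since $f_n(t)\to f(t)$ and $f_n(s)\to f(s)$ in $F$ and $\rho_k$ is continuous on $F$, we obtain $\rho_k(f(t)-f(s))=\lim_{n\to\infty}\rho_k(f_n(t)-f_n(s))\leq M_k|t-s|$. As this holds for every $k\geq1$, the map $f$ is Lipschitz, hence continuous: given $s\in[0,1]$, $k\geq1$ and $\epsilon>0$, any $t\in[0,1]$ with $|t-s|<\epsilon/(1+M_k)$ satisfies $\rho_k(f(t)-f(s))<\epsilon$, and since the $(\rho_k)$ are increasing this exhausts a basis of neighbourhoods of $f(s)$.

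The only mild subtlety — and the closest thing to an obstacle, though it is entirely standard — is the justification of the vector-valued fundamental theorem of calculus and the estimate $\rho_k\!\left(\int_s^t g'\right)\leq\int_s^t\rho_k(g')$ in a general Fréchet space. This can be bypassed by composing with continuous linear functionals: for $\lambda\in F^*$ the scalar curve $\lambda\circ f_n$ lies in $C^1([0,1];\R)$ (or $\C$) with $(\lambda\circ f_n)'=\lambda\circ f_n'$, so the classical mean value inequality gives $|\lambda(f_n(t)-f_n(s))|\leq|t-s|\sup_{u}|\lambda(f_n'(u))|$; invoking the Hahn--Banach description $\rho_k(y)=\sup\{|\lambda(y)|:\lambda\in F^*,\ |\lambda|\leq\rho_k\}$ recovers $\rho_k(f_n(t)-f_n(s))\leq M_k|t-s|$ directly, and the remainder of the argument is unchanged.
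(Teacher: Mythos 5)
Your proof is correct and follows essentially the same route as the paper's: an equi-Lipschitz estimate $\rho_k(f_n(t)-f_n(s))\leq M_k|t-s|$ obtained from the uniform bound on the derivatives, passed to the pointwise limit, applied seminorm by seminorm. Your extra care with the vector-valued fundamental theorem of calculus (and the Hahn--Banach workaround) is a welcome refinement of a step the paper takes for granted, but does not change the argument.
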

	
	\begin{proof}
		Suppose first that $(F,\norm{\cdot}_F)$ is Banach. Define the constant $K:=\sup_{n\geq0}\norm{f_n}_{C^1([0,1];F)}$, which is finite by hypothesis. For all $n\geq0$, and all $t_0,t\in[0,1]$, we have 
		\[\norm{f_n(t)-f_n(t_0)}_F\leq\int_{t_0}^t\norm{f_n'(s)}_F\d s\leq K|t-t_0|.\]
 Taking $n\to\infty$ gives $\norm{f(t)-f(t_0)}_F=\underset{n\to\infty}\lim\,\norm{f_n(t)-f_n(t_0)}_F\leq K|t-t_0|$, showing that $f$ is Lipschitz. In the general case, the topology of $F$ is generated by a countable family of seminorms, and we can use the previous estimate for each one of these seminorms.
	\end{proof}
	
Note that the region $\cW_{r,\bolds}$ depends on $\gamma$, and we will denote it by $\cW_{r,\bolds}^\gamma$ if some confusion is possible.	
	
	\begin{proposition}\label{prop:gamma_continuous}
		For all partitions $(r,\bolds)\in\cT$, the function $(\gamma,\alpha)\mapsto\cI_{r,\bolds}^\gamma(\alpha)\in\cC'$ is continuous on the set $\{(\gamma,\alpha)|\,\gamma\in(0,2)\text{ and }\alpha\in\cW_{r,\bolds}^\gamma\}$.
	\end{proposition}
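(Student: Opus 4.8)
The plan is to realise $\cI_{r,\bolds}^\gamma(\alpha)$ as a limit, locally uniform in $(\gamma,\alpha)$, of the truncated integrals $\cI_{r,\bolds}^{\gamma,t}(\alpha)$ --- each of which depends continuously on $(\gamma,\alpha)$ at fixed $t$ --- and to extract the continuity in the $\gamma$ direction from Lemma~\ref{lem:cts}. Throughout, $E$ denotes whichever fixed Fr\'echet space the statement is being tested in (a weighted space $e^{-\beta c}\cD(\cQ)$, or $\C$ after pairing $\cI_{r,\bolds}^\gamma(\alpha)$ with a test function $F\in\cC$ and a value of the zero mode); the argument is the same in all cases, so I keep $E$ implicit. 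Fix $(\gamma_0,\alpha_0)$ with $\gamma_0\in(0,2)$ and $\alpha_0\in\cW_{r,\bolds}^{\gamma_0}$. The condition cutting out $\cW_{r,\bolds}^\gamma$ in Proposition~\ref{prop:regular_rectangle} is \emph{open} in $(\gamma,\alpha)$: for the rectangular partition it amounts, by \eqref{eq:scaling_annulus}, to the geometric ratio
\[q(\gamma,\alpha):=e^{r\gamma(\alpha-\alpha_{r,s})}\,e^{-\frac12\left((\alpha+r\gamma-Q)^2-\beta^2\right)}\]
being $<1$ for a suitable $\beta$, and the general case reduces to this by the induction in Propositions~\ref{prop:regular_rectangle} and~\ref{lem:integrable}. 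Hence there is a compact box $K=I\times\overline{B(\alpha_0,\delta)}$ around $(\gamma_0,\alpha_0)$, with $I\subset(0,2)$ compact, on which $\sup_K q<1$ and $K\subset\{\,(\gamma,\alpha):\alpha\in\cW_{r,\bolds}^\gamma\,\}$.

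The first ingredient is that, for each fixed $t$, the map $(\gamma,\alpha)\mapsto\cI_{r,\bolds}^{\gamma,t}(\alpha)\in E$ is continuous on $K$ and is $C^1$ in $\gamma$. After the Cameron--Martin shift of \eqref{eq:def_psi}, the integrand $\Psi_\alpha^{\gamma,t}(\boldw)$ is, for $\boldw\in\Delta_t^r$, a product of explicit prefactors smooth in $(\gamma,\alpha)$ times a Laplace transform $\E_\varphi[\exp(-\mu e^{\gamma c}\int_{\A_t\setminus D_t(\boldw)}\cdots\,\d M_\gamma)]$ of a GMC integral over a bounded region that stays away from the insertions. Working with the $\epsilon$-regularisation \eqref{eq:def_psi_eps} (where differentiation under $\E_\varphi$ is harmless) and letting $\epsilon\to0$, standard GMC calculus identifies $\partial_\gamma$ of this quantity as a finite sum of integrals of the same shape but carrying one extra logarithmic weight --- a ``$\log$-field insertion'' inside the GMC, plus terms proportional to $c$ and to $\log|w_j|$. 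Integrating over the compact $\Delta_t^r$, dominated by the estimates of Proposition~\ref{prop:regular_rectangle}, gives continuity and the existence of the $\gamma$-derivative in $E$.

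Now fix $\alpha=\alpha_0$ and reparametrise $I\cong[0,1]$. I would apply Lemma~\ref{lem:cts} to $f_n(u):=\cI_{r,\bolds}^{\gamma(u),t_n}(\alpha_0)\in E$ for a sequence $t_n\to\infty$: by the previous paragraph $f_n\in C^1([0,1];E)$; $f_n(u)\to\cI_{r,\bolds}^{\gamma(u)}(\alpha_0)$ pointwise by the definition of $\cI_{r,\bolds}$ on $\cW_{r,\bolds}^\gamma$; and, crucially, $(f_n)$ is bounded in $C^1([0,1];E)$. The uniform bound on $\norm{f_n}_\infty$ is Proposition~\ref{prop:regular_rectangle}, with constants uniform over $K$ since $\sup_K q<1$; the uniform bound on $\norm{f_n'}_\infty$ comes from rerunning the proof of that proposition for the $\gamma$-differentiated integrand of the previous paragraph --- the extra logarithmic factors are integrable perturbations that the scaling identity \eqref{eq:scaling_annulus} still absorbs, the splitting \eqref{eq:split_annulus} and the induction on $r$ carry over, and the boundary contributions still vanish because their domain has volume $O(e^{-t})$. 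Lemma~\ref{lem:cts} then yields that $\gamma\mapsto\cI_{r,\bolds}^\gamma(\alpha_0)$ is continuous at $\gamma_0$. To upgrade to joint continuity at $(\gamma_0,\alpha_0)$, I would combine this with the analyticity of $\alpha\mapsto\cI_{r,\bolds}^\gamma(\alpha)$ on $\cW_{r,\bolds}^\gamma$ (Proposition~\ref{prop:regular_rectangle}) and the sup-bound $\sup_K\norm{\cI_{r,\bolds}^\gamma(\alpha)}<\infty$: by Cauchy's estimates the family $\{\alpha\mapsto\cI_{r,\bolds}^\gamma(\alpha)\}_{\gamma\in I}$ is uniformly Lipschitz, hence equicontinuous, in $\alpha$ on $\overline{B(\alpha_0,\delta/2)}$, and a three-$\varepsilon$ argument closes it.

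The hard part will be the uniform-in-$n$ bound on $\norm{f_n'}_\infty$ --- equivalently, the uniform-in-$t$ control of the $\gamma$-derivative of the truncated integral. One must check that differentiating in $\gamma$, which introduces the logarithmic singularities $\log|w_j|$, $\log|z-w_j|$ and a ``$\log$-field'' inside the GMC Laplace transform, does not disrupt the geometric-decay mechanism behind Proposition~\ref{prop:regular_rectangle}. The key points are that these logarithms cost at most powers of $\log(1/|w|)$, which are harmless on the small annuli $\A_{1,n+1}$, and that the multiplicative ratio $q(\gamma,\alpha)$ in \eqref{eq:scaling_annulus} is untouched by them; so the same contraction argument, now run on the compact box $K$ where $\sup_K q<1$, delivers the required uniform $C^1$ bound.
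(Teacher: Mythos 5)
Your proposal is correct and follows essentially the same route as the paper: approximate $\cI_{r,\bolds}^\gamma(\alpha)$ by a regularised family that is $C^1$ in $\gamma$, show the $\gamma$-derivative (which produces logarithmic insertions via Gaussian integration by parts) is bounded uniformly in the regularisation parameter because logarithmic weights do not shrink the region of absolute convergence, and conclude with Lemma~\ref{lem:cts}. The only differences are cosmetic: the paper regularises in $\epsilon$ (mollifying the field on the full disc, and organising the computation by giving each insertion its own $\gamma_j$ so that $\del_\gamma$ becomes a $\gamma_{r+1}$-derivative of a type-$(r+1,(\bolds,0))$ integral) rather than truncating in $t$, and it leaves the upgrade from $\gamma$-continuity at fixed $\alpha$ to joint continuity implicit, whereas you spell it out via Cauchy estimates.
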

	
	\begin{proof}
Fix $\gamma_0\in(0,2)$ and $\alpha\in\cW_{r,\bolds}^{\gamma_0}$. We will show that $\gamma\mapsto\cI_{r,\bolds}^\gamma(\alpha)$ is continuous in a (real) neighbourhood of $\gamma_0$.
	
Given $\gamma_1,...,\gamma_r$ in a neighbourhood of $\gamma_0$, and $\epsilon>0$, let us consider
		\begin{align*}
			&F^\epsilon_\gamma(\boldw):=e^{(\alpha+\sum_{j=1}^r\gamma_j-Q)c}\E_\varphi\left[\epsilon^\frac{\alpha^2}{2}e^{\alpha X_\epsilon(0)}\prod_{j=1}^r\epsilon^\frac{\gamma_j^2}{2}e^{\gamma_jX_\epsilon(w_j)}e^{-\mu e^{\gamma c}\int_\D\epsilon^\frac{\gamma^2}{2}e^{\gamma X_\epsilon}|\d z|^2}\right];\\
			&\cJ_{r,\bolds}^{\boldsymbol{\gamma},\epsilon}(\gamma):=\int_{\D^r}F_\gamma^\epsilon(\boldw)\frac{|\d\boldw|^2}{\boldw^{\bolds}},
		\end{align*}
		where we write $\boldsymbol{\gamma}=(\gamma_1,...,\gamma_r)$. Note that $\cJ_{r,\bolds}^\gamma=\cI_{r,\bolds}^\gamma(\alpha)$ is our singular integral.
		
		For each $\epsilon>0$, $\cJ_{r,\bolds}^{\boldsymbol{\gamma},\epsilon}$ is differentiable in $\gamma$, and we have in $\cC'$
		\begin{align*}
			\frac{\del\cJ_{r,\bolds}^{\boldsymbol{\gamma},\epsilon}}{\del\gamma}
			&=-\mu\int_\D\E_\varphi\left[\left(c+X_\epsilon(w)+\gamma\log\epsilon\right)\epsilon^\frac{\gamma^2}{2}e^{\gamma X_\epsilon(w)}\epsilon^\frac{\alpha^2}{2}e^{\alpha X_\epsilon(0)}\right.\\
			&\qquad\times\left.\prod_{j=1}^r\epsilon^\frac{\gamma_j^2}{2}e^{\gamma_jX_\epsilon(w_j)}\exp\left(-\mu e^{\gamma c}\int_\D\epsilon^\frac{\gamma^2}{2}e^{\gamma X_\epsilon(z)}|\d z|^2\right)\right]|\d w|^2\frac{|\d\boldw|^2}{\boldw^{\bolds}}\\
			&=-\mu\frac{\del}{\del\gamma_{r+1}}_{|\gamma_{r+1}=\gamma}\cJ_{r+1,(\bolds,0)}^{(\boldsymbol{\gamma},\gamma_{r+1}),\epsilon}
		\end{align*}

Now, we study the limit of this expression as $\epsilon\to0$. Writing $c+X_\epsilon(w)+\gamma\log\epsilon=c+(\bP\varphi)_\epsilon(w)+X_{\D,\epsilon}+\gamma\log\epsilon$, we see that the only possibly problematic term in the limit is $X_{\D,\epsilon}(w)+\gamma\log\epsilon$. This term can be treated by Gaussian integration by parts, similarly to the proof of the derivative formula \eqref{eq:derivative_formula}: the insertion of $X_\D(w)$ in the expectation amounts to differentiating the functional (of $X_\D$) in direction $G_\D(w,\cdot)$. Combining with the Cameron--Martin shift, we get the following limit as $\epsilon\to0$:
\begin{align*}
&\E_\varphi\left[\left(X_{\D,\epsilon}(w)+\gamma\log\epsilon\right)\epsilon^\frac{\gamma^2}{2}e^{\gamma X_{\D,\epsilon}(w)}\epsilon^\frac{\alpha^2}{2}e^{\alpha X_\epsilon(0)}\prod_{j=1}^r\epsilon^\frac{\gamma_j^2}{2}e^{\gamma_jX_{\D,\epsilon}(w_j)}e^{-\mu e^{\gamma c}\int_\D\epsilon^\frac{\gamma^2}{2}e^{\gamma X_\epsilon(z)}|\d z|^2}\right]\\
&\to\left(-\gamma G_\del(w,w)+\alpha G_\D(w,0)+\sum_{j=1}^r\gamma_jG_\D(w,w_j)\right)\E_\varphi\left[e^{-\mu e^{\gamma c}\int_\D e^{\gamma^2G_\D(z,w)}\prod_{j=1}^re^{\gamma\gamma_jG_\D(z,w_j)}\frac{\d M_\gamma(z)}{|z|^{\gamma\alpha}}}\right]\\
&\quad-\mu e^{\gamma c}\int_\D G_\D(w,w')\E_\varphi\left[e^{-\mu e^{\gamma c}\int_\D e^{\gamma^2G_\D(z,w)}e^{\gamma^2G_\D(z,w')}\prod_{j=1}^re^{\gamma\gamma_jG_\D(z,w_j)}\frac{\d M_\gamma(z)}{|z|^{\gamma\alpha}}}\right]|\d w'|^2.
\end{align*}
We have used that the covariance of the regularised field $X_\epsilon$ is the regularised Green's function $G_{\D,\epsilon}$ satisfying $G_{\D,\epsilon}(w,w)=\log\frac{1}{\epsilon}-\gamma G_\del(w,w)+o(1)$ as $\epsilon\to0$.

Recall that $\cW_{r,\bolds}$ is a region of absolute convergence of $\cI_{r,\bolds}$, and $\cW_{r,\bolds}\subset\cW_{r+1,(\bolds,0)}$. Moreover, the presence of the extra logarithmic singularities (coming from Green's function) in the previous display leaves this region unchanged. Altogether, this shows that the function $\gamma\mapsto\frac{\del}{\del\gamma}\cJ^{(\gamma,...,\gamma),\epsilon}_{r,\bolds}(\gamma)$ is uniformly bounded in $\epsilon$, for $\gamma$ in a neighbourhood of $\gamma_0$, i.e. the family $\gamma\mapsto\cI^{\gamma,\epsilon}_{r,\bolds}(\alpha)$ is uniformly bounded in $C^1$. It follows from Lemma \ref{lem:cts} that $\gamma\mapsto\cI_{r,\bolds}^\gamma$ is continuous in a neighbourhood of $\gamma_0$.

	\end{proof}

	\begin{corollary}\label{cor:cts_mero}
		For all partitions $(r,\bolds)\in\cT$, the function $(\gamma,\alpha)\mapsto\cI_{r,\bolds}^\gamma(\alpha)\in\cC'$ is continuous on the set
		\begin{equation}\label{eq:cts_set}
			\{(\gamma,\alpha)|\,\gamma\in(0,2)\text{ and }\alpha\in\cW_0\}\setminus\cup_{r',s'\in\N^*}\left\lbrace(\gamma,(1-r')\frac{\gamma}{2}+(1-s')\frac{2}{\gamma})|\,\gamma\in(0,2)\right\rbrace.
		\end{equation}
		For each $\gamma\in(0,2)$, the function $\alpha\mapsto\cI^\gamma_{r,\bolds}(\alpha)\in\cC'$ is meromorphic in $\cW_0$, with possible poles on the Kac table.
	\end{corollary}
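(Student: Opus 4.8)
The plan is to treat the two halves of the statement separately. The second assertion --- meromorphy of $\alpha\mapsto\cI_{r,\bolds}^\gamma(\alpha)$ on $\cW_0$ with poles confined to the Kac table --- is exactly Proposition \ref{prop:s_arbitrary}, so there is nothing to add there. For the joint continuity I would introduce $\cE$ for the set \eqref{eq:cts_set}, i.e. $(0,2)\times\cW_0$ with every Kac curve $\Gamma_{r',s'}:=\{(\gamma,\alpha_{r',s'})\mid\gamma\in(0,2)\}$ removed, and observe that by construction each continuous function $(\gamma,\alpha)\mapsto-\frac\gamma2(\alpha-\alpha_{r^*,s_1})=-\frac\gamma2\bigl(\alpha-(1-r^*)\frac\gamma2-(1-s_1)\frac2\gamma\bigr)$ is non-vanishing on $\cE$. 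I would then re-run the inductions of Propositions \ref{prop:s_is_one} and \ref{prop:s_arbitrary} verbatim (on the level $N$, and on the distance to the minimal partition inside $\cT_N$), replacing ``analytic in $\alpha$ on $\cW_0$'' by ``jointly continuous in $(\gamma,\alpha)$ on $\cE$'' at every step. Since an integral of type $(r,\bolds)$ differs from $\cI_{r,\bolds}$ only by a fixed test-function insertion and finitely many non-singular insertions, the upgraded statement holds for all type-$(r,\bolds)$ integrals as well, which is what the inductions actually manipulate.

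The three ingredients making this work are: (i) Proposition \ref{prop:gamma_continuous} and its proof, which differentiates in $\mu$ and in auxiliary charges $\gamma_{r+1}$ and hence already covers type-$(r,\bolds)$ integrals, giving joint continuity on the region of absolute convergence --- in particular the base case of the empty partition, where $\cW_{0,\emptyset}=\cW_0$ and continuity holds on all of $(0,2)\times\cW_0\supset\cE$; (ii) the structural identities \eqref{eq:init_rec}--\eqref{eq:show_kac_bis} and \eqref{eq:extension_mero}, which after passing to the limit $t\to\infty$ express $-\frac\gamma2(\alpha-\alpha_{r^*,s_1})\,\cI_{r,\bolds}^\gamma$ as a finite sum of integrals of strictly lower type; and (iii) the elementary remark that $g/h$ is jointly continuous on $\cE$ whenever $g$ is and $h$ is jointly continuous and non-vanishing there. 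Feeding (ii) with the induction hypothesis shows the right-hand side of each identity is continuous on $\cE$; dividing by $-\frac\gamma2(\alpha-\alpha_{r^*,s_1})$, which vanishes only on the removed curve $\Gamma_{r^*,s_1}$, yields continuity of $\cI_{r,\bolds}^\gamma$ on $\cE$ and closes the induction. The meromorphy statement for each fixed $\gamma$ is then just Proposition \ref{prop:s_arbitrary} reread.

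I expect the only real friction to be, exactly as in Propositions \ref{prop:s_is_one} and \ref{prop:s_arbitrary}, the handling of the boundary terms $\hat\cI_3^t+\cI_4^t$ and $\hat\cJ_4^t+\cJ_5^t$: one must verify that their vanishing as $t\to\infty$ is uniform in $\gamma$ on a neighbourhood of any fixed $\gamma_0$, which it is because their domains of integration have total volume $O(e^{-t})$ and the integrands are uniformly integrable on the fixed region $\cW_{r+1}$; the uniform integrability is stable under small perturbations of $\gamma$ by the scaling estimate \eqref{eq:scaling_annulus}, whose contraction constant $e^{r\gamma(\alpha-\alpha_{r,s})}e^{-\frac12((\alpha+r\gamma-Q)^2-\beta^2)}$ depends continuously on $\gamma$ and stays strictly below $1$ near $\gamma_0$. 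Everything else is a routine transcription of the two earlier proofs, with ``analytic'' upgraded to ``jointly continuous'' throughout.
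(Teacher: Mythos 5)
Your proposal is correct and follows essentially the same route as the paper: iterate the induction formulae \eqref{eq:init_rec}, \eqref{eq:rec} with Proposition \ref{prop:gamma_continuous} supplying joint continuity on the regions of absolute convergence, and then remove the linear Kac factors (the paper phrases this as finding a polynomial $P(\alpha)=\prod_k(\alpha-\alpha_{r_k,s_k})$ with $P(\alpha)\cI^\gamma_{r,\bolds}(\alpha)$ jointly continuous on all of $(0,2)\times\cW_0$, which is equivalent to your step-by-step division on the set \eqref{eq:cts_set}). The meromorphy claim is, as you say, just Proposition \ref{prop:s_arbitrary} restated.
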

	\begin{proof}
		By the induction formulae \eqref{eq:init_rec}, \eqref{eq:rec}, we can express $(\alpha-\alpha_{\tilde{r}^*,\tilde{s}_1})\cI_{\tilde{r},\tilde{\bolds}}$ in terms of integrals of lower type, with a larger domain of continuity in $(\gamma,\alpha)$ as given by Proposition \ref{prop:gamma_continuous}. Iterating the formula, we find a polynomial $P(\alpha)=\prod_k(\alpha-\alpha_{r_k,s_k})$ such that $P(\alpha)\cI_{\tilde{r},\tilde{\bolds}}^\gamma(\alpha)$ is continuous on $\{(\gamma,\alpha)|\gamma\in(0,2)\text{ and }\alpha\in\cW_0\}$. Thus, $\cI_{\tilde{r},\tilde{\bolds}}$ is continuous on the set \eqref{eq:cts_set}, and the function $\alpha\mapsto\cI_{\tilde{r},\tilde{\bolds}}^\gamma(\alpha)$ is meromorphic in $\cW_0$ for each $\gamma\in(0,2)$, with possible poles in the Kac table.
	\end{proof}

	\subsection{Descendant states and singular integrals}\label{subsec:descendants}
	The goal of this section is to express the descendant states using the singular integrals studied in the previous subsection. The key idea is to extend the approach of \cite[Section 7]{GKRV20_bootstrap}, which constructs the (generalised) eigenstates of the Liouville Hamiltonian by studying the long-time asymptotics of the Liouville semigroup \eqref{eq:liouville_hamiltonian} acting on the free field eigenstates. However, this approach only converges for $\Re(\alpha)$ small (corresponding to our region of absolute convergence $\cW_{r,\bolds}$), and the previous subsections give the necessary material to get a probabilistic expression for the analytic continuation. 
		
	In the statement of the next lemma, $\boldsymbol{\varepsilon}=(\varepsilon_n)_{n\in\N^*}$ is a sequence of complex numbers, and we use the multi-index notation. The differential operator $\frac{\del}{\del\varepsilon_n}$ denotes the complex derivative in $\varepsilon_n$.
	\begin{lemma}\label{lem:expression_descendants}
		Let $F\in\cC$, $\mathbf{k}\in\cT$, and $\alpha\in\C$. For all $t>0$, we have
		\begin{align*}
			e^{t(2\Delta_\alpha+|\mathbf{k}|)}\E\left[e^{-t\bH}\left(\varphi^\mathbf{k}e^{(\alpha-Q)c}\right)F\right]
			&=e^{(\alpha-Q)c}\frac{\del^{\ell(\mathbf{k})}}{\del\boldsymbol{\varepsilon}^\mathbf{k}}_{|\boldsymbol{\varepsilon}=0}\E\left[\exp\left(-\mu e^{\gamma c}\int_{\A_t}\prod_{n\in\N^*}e^{\gamma\Re\left(\frac{\varepsilon_n}{n}z^{-n}\right)}\frac{\d M_\gamma(z)}{|z|^{\gamma\alpha}}\right)\right.\\
			&\qquad\qquad\qquad\qquad\times\left.F\left(\varphi+\sum_{n\in\N^*}\frac{1}{n}\Re(\varepsilon e^{-ni\theta})\right)\right].
		\end{align*}
	\end{lemma}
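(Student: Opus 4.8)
The plan is to write out the Feynman--Kac representation \eqref{eq:liouville_semigroup} of the Liouville semigroup applied to $\varphi^\mathbf{k}e^{(\alpha-Q)c}$, separate the Brownian motion from the Dirichlet field, and then convert the polynomial insertion $\varphi^\mathbf{k}$ into a Gaussian exponential that can be removed by Cameron--Martin/Girsanov shifts. Throughout one works with the $\epsilon$-mollified field and lets $\epsilon\to0$ at the very end, exactly as in the passage from \eqref{eq:def_psi_eps} to \eqref{eq:def_psi}.

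First I would apply \eqref{eq:liouville_semigroup} to the function $(c,\varphi)\mapsto\varphi^\mathbf{k}e^{(\alpha-Q)c}$, which depends on the constant mode only through $e^{(\alpha-Q)c}$. Pairing with $F$ and taking the expectation yields
\[\E\left[e^{-t\bH}\left(\varphi^\mathbf{k}e^{(\alpha-Q)c}\right)F\right]=e^{-\frac{Q^2}{2}t}e^{(\alpha-Q)c}\,\E\left[(\varphi_t)^\mathbf{k}\,e^{(\alpha-Q)B_t}\,e^{-\mu e^{\gamma c}\int_{\A_t}\frac{\d M_\gamma(z)}{|z|^{\gamma Q}}}\,F(\varphi)\right],\]
with $(\varphi_t)^\mathbf{k}=\prod_n((\varphi_t)_n)^{k_n}$ and $(\varphi_t)_n$ the $n$-th Fourier mode of $\varphi_t$. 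Next I would use Girsanov's theorem to delete the factor $e^{(\alpha-Q)B_t}$: adding the constant drift $\alpha-Q$ to the Brownian motion $(B_s)_{s\le t}$ shifts the radial part of $X$ (its average on circles centred at the origin) at a point $z$ by $(\alpha-Q)\log\frac{1}{|z|}$, which replaces the weight $|z|^{-\gamma Q}$ by $|z|^{-\gamma\alpha}$ in the GMC integrand while leaving the zero-mean fields $\varphi_t$ and $\varphi$ untouched, and it produces the scalar $e^{\frac{(\alpha-Q)^2}{2}t}$. Because $\frac{(\alpha-Q)^2-Q^2}{2}=-2\Delta_\alpha$, multiplying by the prefactor $e^{t(2\Delta_\alpha+|\mathbf{k}|)}$ leaves precisely the factor $e^{t|\mathbf{k}|}$ in front of the expectation.

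I would then expand the polynomial. With $(\varphi_t)_n=e^{-nt}\varphi_n+(X_\D)_{t,n}$, where $(X_\D)_{t,n}$ is the $n$-th Fourier coefficient of $X_\D$ on $\{|z|=e^{-t}\}$, one has $e^{t|\mathbf{k}|}(\varphi_t)^\mathbf{k}=\prod_n(\varphi_n+e^{nt}(X_\D)_{t,n})^{k_n}$, and $\prod_n y_n^{k_n}=\frac{\del^{\ell(\mathbf{k})}}{\del\boldsymbol{\varepsilon}^\mathbf{k}}\big|_{\boldsymbol{\varepsilon}=0}\exp(\sum_n\varepsilon_n y_n)$; interchanging this derivative with the expectation is legitimate since $F$ involves only finitely many modes and the integrand is entire in $\boldsymbol{\varepsilon}$ near the origin. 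It remains to absorb the two exponentials $\exp(\sum_n\varepsilon_n\varphi_n)$ and $\exp(\sum_n\varepsilon_n e^{nt}(X_\D)_{t,n})$. Both exponents have vanishing variance ($\E[\varphi_n\varphi_m]=0$, and the analogous identity for the Dirichlet modes), so the Cameron--Martin shifts carry no Gaussian normalisation. The shift removing the $X_\D$-exponential adds to $X$ the deterministic field $z\mapsto\sum_n\frac{\varepsilon_n}{2n}(z^{-n}-\bar z^n)$ (its Cameron--Martin representative, computed from the Dirichlet Green function $G_\D$), multiplying the GMC integrand by $\exp(\gamma\sum_n\frac{\varepsilon_n}{2n}(z^{-n}-\bar z^n))$; the shift removing the $\varphi$-exponential sends $\varphi\mapsto\varphi+\sum_n\frac{\varepsilon_n}{2n}e^{-ni\theta}$, which updates the argument of $F$ and, via the harmonic extension appearing in the GMC, contributes the further factor $\exp(\gamma\sum_n\frac{\varepsilon_n}{2n}\bar z^n)$. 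Multiplying, the net GMC factor is $\exp(\gamma\sum_n\frac{\varepsilon_n}{2n}z^{-n})$.

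Collecting the pieces gives the asserted formula with $\prod_n e^{\gamma\Re(\varepsilon_n z^{-n}/n)}$ replaced by $\exp(\gamma\sum_n\frac{\varepsilon_n}{2n}z^{-n})$ and $\sum_n\frac1n\Re(\varepsilon_n e^{-ni\theta})$ replaced by $\sum_n\frac{\varepsilon_n}{2n}e^{-ni\theta}$. These two versions differ only by terms proportional to the $\bar\varepsilon_n$'s, which are killed by every holomorphic derivative $\del_{\varepsilon_n}$ and vanish when $\boldsymbol{\varepsilon}=0$; hence $\frac{\del^{\ell(\mathbf{k})}}{\del\boldsymbol{\varepsilon}^\mathbf{k}}\big|_{\boldsymbol{\varepsilon}=0}$ produces the same value, which is the claimed identity (the symmetrised form is the natural one as it exhibits $\varphi+(\text{shift})$ as a genuine real field). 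The step I expect to require the most care is the $\epsilon\to0$ justification of the Cameron--Martin shifts of the GMC measure and of the finitely many modes on which $F$ depends; this is standard and can be handled as in the proof of the derivative formula \eqref{eq:derivative_formula} and in \cite[Section 12.1]{KRV_DOZZ}. Beyond that, the only real work is to keep track of which shift acts on which part of the field --- the radial part of $X$, the non-zero modes of $X_\D$, and the boundary field $\varphi$ --- and to check that they do not interfere.
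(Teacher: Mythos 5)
Your strategy is in substance the same as the paper's: convert the polynomial insertion into an exponential generating function, remove it by Girsanov/Cameron--Martin shifts of the radial part, the Dirichlet modes and the boundary field, and recover the descendant by holomorphic differentiation in $\boldsymbol{\varepsilon}$ at $0$. Your bookkeeping is correct: the radial Girsanov turns $|z|^{-\gamma Q}$ into $|z|^{-\gamma\alpha}$ and produces exactly $e^{-2t\Delta_\alpha}$; the decomposition $(\varphi_t)_n=e^{-nt}\varphi_n+(X_\D)_{t,n}$ and the two Cameron--Martin representatives $\tfrac{\varepsilon_n}{2n}(z^{-n}-\bar z^n)$ and $\tfrac{\varepsilon_n}{2n}\bar z^n$ are right and sum to $\tfrac{\varepsilon_n}{2n}z^{-n}$, matching the holomorphic part of the stated formula. (The paper organises the same computation through the exponential martingales $\cE_\varepsilon(t)$ of the Ornstein--Uhlenbeck modes $\varphi_n(t)$, first conditionally on $\varphi$ and then over $\P_{\S^1}$, but the shifts are identical.)

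There is one step that does not go through as written. You use the purely holomorphic generating function $\exp(\sum_n\varepsilon_n y_n)$, and the resulting Cameron--Martin shift of the boundary field is by the \emph{complex} function $\sum_n\tfrac{\varepsilon_n}{2n}e^{-ni\theta}$: it moves $\varphi_{-n}=\bar\varphi_n$ without moving $\varphi_n$, so the shifted field is not real. Consequently the intermediate object $F(\varphi+\sum_n\tfrac{\varepsilon_n}{2n}e^{-ni\theta})$ is not defined for a general $F\in\cC$: such an $F$ is $f(\varphi_1,\dots,\varphi_N)$ with $f$ merely $C^\infty(\C^N)$, and evaluating $f$ at independently complexified arguments $(\varphi_n,\bar\varphi_n+\tfrac{\varepsilon_n}{2n})$ would require $f$ to be real-analytic. (The GMC factor is unproblematic since $e^{\gamma X}$ extends entirely in $X$, but the test function does not.) The fix is exactly what the paper does: use the \emph{real} exponential $e^{2\Re(\varepsilon_n\varphi_n(t))}$ (i.e.\ the full OU exponential martingale) as generating function, so every Girsanov shift is by a real function and $F$ at the shifted field is well defined; this introduces extra $\bar\varepsilon_n$-dependent factors (including the normalisation $e^{|\varepsilon_n|^2/2n}$), all of which are annihilated by $\frac{\del^{\ell(\mathbf{k})}}{\del\boldsymbol{\varepsilon}^{\mathbf{k}}}\big|_{\boldsymbol{\varepsilon}=0}$ by the same Wirtinger-derivative argument you invoke at the end. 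With that modification your argument is complete.
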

	
	\begin{proof}
		Recall that we write $X=X_\D+P\varphi$ the GFF in the disc, where $X_\D$ is a Dirichlet GFF and $P\varphi$ is the harmonic extension of the boundary field, which is written in Fourier modes as
		\[P\varphi(z)=2\Re\left(\sum_{n=1}^\infty\varphi_nz^n\right).\]
		Let us define $\varphi_n(t):=\frac{1}{2\pi}\int_0^{2\pi}X(e^{-t+i\theta})e^{-ni\theta}\d\theta$, as well as $X_n(t):=\Re(\varphi_n(t))$, $Y_n(t):=\Im(\varphi_n(t))$. From \cite[Section 4.1]{GKRV20_bootstrap}, $X_n$ and $Y_n$ are independent Ornstein--Uhlenbeck (OU) processes with parameters $(\kappa,\sigma)=(n,\frac{1}{\sqrt{2}})$. See Appendix~\ref{app:ou} for the notation $(\kappa,\sigma)$ of the parameters of this process. 
		
		For notational simplicity, we only treat the case $\varphi^\mathbf{k}=\varphi_n^k$ for some $k\in\N^*$. The general case is then a straightforward consequence of the independence of the processes $(X_n(t),Y_n(t))_{n\in\N^*}$. For each $\varepsilon\in\C$, we consider the martingale
		\begin{align*}
			\cE_\varepsilon(t)
			&:=e^{e^{nt}\left(2\Re(\varepsilon)X_n(t)-\frac{\Re(\varepsilon)^2}{n}\sinh(nt)\right)}e^{e^{nt}\left(-2\Im(\varepsilon)Y_n(t)-\frac{\Im(\varepsilon)^2}{n}\sinh(nt)\right)}\\
			&=e^{e^{nt}\left(2\Re(\varepsilon\varphi_n(t))-\frac{|\varepsilon|^2}{n}\sinh(nt)\right)}
		\end{align*}
		Note that $\cE_\varepsilon(t)$ is real analytic in $\varepsilon$, and 
		\[\cE_\varepsilon(t)=e^{e^{nt}\varepsilon\varphi_n(t)}(1+\bar{\varepsilon}O(1))=(1+\bar{\varepsilon}O(1))\sum_{k=0}^\infty\frac{\varepsilon^k}{k!}e^{nkt}\varphi_n(t)^k,\]
		so that
		\[e^{nkt}\varphi_n(t)^k=\frac{\del^k}{\del\varepsilon^k}_{|\varepsilon=0}\cE_\varepsilon(t).\]
		%
		
		Next, we evaluate the effect of reweighting the measure by the martingale $\cE_\varepsilon(t)$. Since $\Re(\varphi_n(t))$ and $\Im(\varphi_n(t))$ are independent, Appendix \ref{app:ou} tells us that the reweighting amounts to the shifts
\begin{align*}		
		&\Re(\varphi_n(t))\mapsto\Re(\varphi_n(t))+\frac{\Re(\varepsilon)}{n}\sinh(nt);\\
		&\Im(\varphi_n(t))\mapsto\Im(\varphi_n(t))-\frac{\Im(\varepsilon)}{n}\sinh(nt),
		\end{align*}
		or $\varphi_n(t)\mapsto\varphi_n(t)+\frac{\bar{\varepsilon}}{n}\sinh(nt)$. Writing $z=e^{-t+i\theta}\in\D$, the corresponding shift of the field is
		\begin{align*}
			X(z)\mapsto X(e^{-t+i\theta})+\frac{2}{n}\sinh(nt)\Re(\varepsilon e^{-ni\theta})=X(z)+\frac{1}{n}\Re(\varepsilon(z^{-n}-\bar{z}^n)).
		\end{align*}
		By Girsanov's theorem, we then get
		\begin{equation}\label{eq:flow_with_girsanov}
			\begin{aligned}
				e^{(\alpha-Q)c}\E_\varphi\left[\cE_\varepsilon(t)e^{-\mu e^{\gamma c}\int_{\A_t}\frac{\d M_\gamma(z)}{|z|^{\gamma\alpha}}}\right]
				=e^{2\Re(\varepsilon\varphi_n)}e^{(\alpha-Q)c}\E_\varphi\left[\exp\left(-\mu e^{\gamma c}\int_{\A_t}e^{\frac{\gamma}{n}\Re\left(\varepsilon(z^{-n}-\bar{z}^n)\right)}\frac{\d M_\gamma(z)}{|z|^{\gamma\alpha}}\right)\right].
			\end{aligned}
		\end{equation}
		
		Now, we take the expectation over $\varphi$. Again by Girsanov's theorem, reweighting $\P_{\S^1}$ by $e^{2\Re(\varepsilon\varphi_n)-\frac{|\varepsilon|^2}{2n}}$ amounts to the shift $\varphi_n\mapsto\varphi_n+\frac{\bar{\varepsilon}}{2n}$. The corresponding shift of the harmonic extension is $P\varphi(z)\mapsto P\varphi(z)+\frac{1}{n}\Re(\varepsilon\bar{z}^n)$. Thus, integrating \eqref{eq:flow_with_girsanov} over $\P_{\S^1}$ gives
		\begin{equation}
			\begin{aligned}
				&e^{(\alpha-Q)c}\E\left[\cE_\varepsilon(t)e^{-\mu e^{\gamma c}\int_{\A_t}\frac{\d M_\gamma(z)}{|z|^{\gamma\alpha}}}F(\varphi)\right]\\
				&\quad=e^{\frac{|\varepsilon|^2}{2n}}e^{(\alpha-Q)c}\E\left[\exp\left(-\mu e^{\gamma c}\int_{\A_t}e^{\frac{\gamma}{n}\Re(\varepsilon z^{-n})}\frac{\d M_\gamma(z)}{|z|^{\gamma\alpha}}\right)F\left(\varphi+\frac{1}{n}\Re(\varepsilon e^{-ni\theta})\right)\right]
			\end{aligned}
		\end{equation}
		
		Finally, we differentiate $k$ times at $\varepsilon=0$. The RHS coincides with the RHS of the statement of the lemma. As for the LHS, we have
		\begin{align*}
			e^{(\alpha-Q)c}\frac{\del^k}{\del\varepsilon^k}\E\left[\cE_\varepsilon(t)e^{-\mu e^{\gamma c}\int_{\A_t}\frac{\d M_\gamma(z)}{|z|^{\gamma\alpha}}}F(\varphi)\right]
			&=e^{nkt}e^{(\alpha-Q)c}\E\left[\varphi_n(t)^ke^{-\mu e^{\gamma c}\int_{\A_t}\frac{\d M_\gamma(z)}{|z|^{\gamma\alpha}}}F(\varphi)\right]\\
			&=e^{t(2\Delta_\alpha+nk)}\E[e^{-t\bH}\left(\varphi_n^ke^{(\alpha-Q)c}\right)F(\varphi)].
		\end{align*}
		
		As mentioned at the beginning of the proof, the case of general $\mathbf{k}$ follows by reweighting by the product of martingales $\cE_{\varepsilon_n}(t)=e^{e^{nt}(2\Re(\varepsilon_n\varphi_n(t))-\frac{|\varepsilon|^2}{n}\sinh(nt))}$ and using independence.

\end{proof}


The previous lemma will allow us to express descendant states as singular integrals. Let us list two important consequences of this representation. 

\begin{lemma}
Suppose $\gamma^2\not\in\Q$. For all $r,s\in\N^*$ and $\mathbf{k}\in\cT_{rs}$, the map $\alpha\mapsto\cP_\alpha(\varphi^\mathbf{k})\in e^{-\beta c}\cD(\cQ)$ is analytic at $\alpha_{r,s}$.
\end{lemma}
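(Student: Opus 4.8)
The plan is to express $\cP_\alpha(\varphi^\mathbf{k})$, tested against an arbitrary $F\in\cC$, as a finite linear combination of singular integrals of the type introduced in Remark~\ref{circle}, all of level at most $rs$, and then invoke Proposition~\ref{cor:level_rs}.

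Fix $F\in\cC$. By the definition $\cP_\alpha(\varphi^\mathbf{k})=\lim_{t\to\infty}e^{t(2\Delta_\alpha+|\mathbf{k}|)}e^{-t\bH}(\varphi^\mathbf{k}e^{(\alpha-Q)c})$ (valid for $\Re(\alpha)$ near $-\infty$) together with Lemma~\ref{lem:expression_descendants}, the quantity $e^{t(2\Delta_\alpha+|\mathbf{k}|)}\E[e^{-t\bH}(\varphi^\mathbf{k}e^{(\alpha-Q)c})F]$ equals $e^{(\alpha-Q)c}$ times $\frac{\del^{\ell(\mathbf{k})}}{\del\boldsymbol{\varepsilon}^\mathbf{k}}\big|_{\boldsymbol{\varepsilon}=0}$ of the expectation appearing there. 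I would differentiate in $\boldsymbol{\varepsilon}$ using the Leibniz rule: each $\del_{\varepsilon_n}$ falling on the potential exponential pulls down one extra factor $\int_{\A_t}w^{-n}e^{\gamma\Re(\cdots)}\,\d M_\gamma(w)/|w|^{\gamma\alpha}$, i.e. one extra $\gamma$-insertion carrying a holomorphic singularity $w^{-n}$ at the origin (the orders adding up when several $\del_{\varepsilon_n}$'s hit the same factor); each $\del_{\varepsilon_n}$ falling on $F$ replaces it by a mode-$n$ derivative of $F$, which is still in $\cC$ and carries no singularity; and pulling down extra powers of $\mu$ produces further $\gamma$-insertions with no singularity at the origin. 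After integrating over the field and applying the Cameron--Martin shift exactly as in \eqref{eq:def_psi_eps}--\eqref{eq:def_psi}, the $e^{(\alpha-Q)c}$ prefactor times $(-\mu e^{\gamma c})$ per insertion reconstitutes precisely the $\Psi_\alpha^t$-prefactor, so each resulting term becomes a regularised integral of type $(\tilde r,\tilde\bolds)$ in the sense of Remark~\ref{circle} (with a test function in $\cC$), where $\tilde\bolds$ is the multiset of the indices $n$ of the $\del_{\varepsilon_n}$'s that landed in the exponential, reordered into a partition. Since each $\varepsilon_n$ is differentiated $k_n$ times in all, this yields the crucial bound $|\tilde\bolds|\leq\sum_n nk_n=|\mathbf{k}|=rs$.

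Letting $t\to\infty$: near $\alpha=-\infty$ all the regions $\cW_{\tilde r,\tilde\bolds}$ overlap, so by Proposition~\ref{lem:integrable} every term converges and the (finite) sum of the limits is $\E[\cP_\alpha(\varphi^\mathbf{k})F]$; thus, for $\Re(\alpha)$ near $-\infty$, $\E[\cP_\alpha(\varphi^\mathbf{k})F]$ is a finite numerical combination of integrals of type $(\tilde r,\tilde\bolds)$ with $|\tilde\bolds|\leq rs$. By Propositions~\ref{prop:s_arbitrary} and~\ref{cor:level_rs}, whose proofs via the induction formulae \eqref{eq:init_rec} and \eqref{eq:rec} go through verbatim for integrals of type $(\tilde r,\tilde\bolds)$ (that is, carrying extra non-singular insertions and a test function), each such integral extends meromorphically to $\cW_0$ with poles on $kac^-$ and, since $\gamma^2\notin\Q$ and $|\tilde\bolds|\leq rs$, is regular at $\alpha_{r,s}$. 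As $\E[\cP_\alpha(\varphi^\mathbf{k})F]$ is itself meromorphic near $\alpha_{r,s}$ with poles on $kac^-$ (Section~\ref{subsec:singular}), uniqueness of analytic continuation forces it to coincide with this regular function on a punctured neighbourhood of $\alpha_{r,s}$; hence $\alpha\mapsto\E[\cP_\alpha(\varphi^\mathbf{k})F]$ is regular at $\alpha_{r,s}$, for every $F\in\cC$. Finally, since $\alpha\mapsto\cP_\alpha(\varphi^\mathbf{k})$ is meromorphic near $\alpha_{r,s}$ with values in $e^{-\beta c}\cD(\cQ)$ and analytic on a punctured neighbourhood $\cU\setminus\{\alpha_{r,s}\}$, a Cauchy-formula argument identical to the one in the proof of Corollary~\ref{prop:regular_descendants} upgrades this: writing the Laurent expansion $\cP_\alpha(\varphi^\mathbf{k})=\sum_{m\geq-M}a_m(\alpha-\alpha_{r,s})^m$ with $a_m\in e^{-\beta c}\cD(\cQ)$, the negative-order coefficients pair to $0$ against every $F\in\cC$ (a dense family), hence vanish in $e^{-\beta c}\cD(\cQ)$, so $\cP_{\alpha_{r,s}}(\varphi^\mathbf{k})=\frac{1}{2i\pi}\oint\cP_\alpha(\varphi^\mathbf{k})\frac{\d\alpha}{\alpha-\alpha_{r,s}}\in e^{-\beta c}\cD(\cQ)$ and $\alpha\mapsto\cP_\alpha(\varphi^\mathbf{k})$ is analytic at $\alpha_{r,s}$.

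The main obstacle is the bookkeeping in the second paragraph: making rigorous that the $\boldsymbol{\varepsilon}$-derivative of the formula in Lemma~\ref{lem:expression_descendants} is a finite combination of integrals of type $(\tilde r,\tilde\bolds)$, tracking which Leibniz terms carry a singularity of which order at the origin, checking that the Cameron--Martin shift turns each term into exactly the objects of Remark~\ref{circle}, and verifying the level bound $|\tilde\bolds|\leq rs$. Once this combinatorial identification is in place, the regularity at $\alpha_{r,s}$ follows directly from Proposition~\ref{cor:level_rs} and the Cauchy-formula upgrade already used in Corollary~\ref{prop:regular_descendants}.
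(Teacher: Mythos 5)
Your proposal is correct and follows essentially the same route as the paper: both use Lemma \ref{lem:expression_descendants} and the Leibniz rule to write $\E[\cP_\alpha(\varphi^\mathbf{k})F]$ as a finite combination of integrals of type $(\tilde r,\tilde\bolds)$ with $|\tilde\bolds|\leq rs$, invoke Proposition \ref{cor:level_rs} for regularity at $\alpha_{r,s}$, and then upgrade from $\cC'$-valued to $e^{-\beta c}\cD(\cQ)$-valued analyticity via Cauchy's formula and the fact that $\cP_\alpha(\varphi^\mathbf{k})$ lies in the Verma module off the Kac table. The level-bound bookkeeping you flag as the main obstacle is exactly the point the paper also handles by iterating the $\varepsilon_n$-derivative of the potential.
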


\begin{proof}
Let $F\in\cC$. We will use Lemma \ref{lem:expression_descendants} to evaluate $\E[\cP_\alpha(\varphi^\mathbf{k})F]$ in terms of singular integrals. By the Leibniz rule, the differential operator $\frac{\del^{\ell(\mathbf{k})}}{\del\boldsymbol{\varepsilon}^\mathbf{k}}$ hits alternatively $F(\varphi+\sum_{n\in\N^*}\frac{1}{n}\Re(\varepsilon e^{-ni\theta}))$ and the potential. We have $\del_\varepsilon F(\varphi+\frac{1}{n}\Re(\varepsilon e^{-ni\theta}))=\frac{1}{2n}\del_{-n}F(\varphi+\frac{1}{n}\Re(\varepsilon e^{-ni\theta}))\in\cC$. Taking higher order derivatives, we see that all the terms produced by differentiating $F$ are in $\cC$.

Next, we evaluate the derivative of the potential, starting at order 1:
\begin{align*}
&\frac{\del}{\del\varepsilon_n}e^{-\mu e^{\gamma c}\int_{\A_t}e^{\frac{\gamma}{n}\Re(\varepsilon_nz^{-n})}\d M_\gamma(z)}\\
&\quad=-\mu\frac{\gamma}{2n}e^{\gamma c}\int_{\A_t}e^{\gamma X(w)-\frac{\gamma^2}{2}\E[X(w)^2]}e^{\frac{\gamma}{n}\Re(\varepsilon_nw^{-n})}e^{-\mu e^{\gamma c}\int_{\A_t}e^{\frac{\gamma}{n}\Re(\varepsilon_nz^{-n})}\d M_\gamma(z)}\frac{|\d w|^2}{w^n}.
\end{align*}
When we iterate this formula and evaluate it at $\boldsymbol{\varepsilon}=0$, we see that 
\[\frac{\del^{\ell(\mathbf{k})}}{\del\boldsymbol{\varepsilon}^\mathbf{k}}_{|\boldsymbol{\varepsilon}=0}\E_\varphi\left[e^{\alpha X(0)-\frac{\alpha^2}{2}\E[X(0)^2]}e^{-\mu e^{\gamma c}\int_{\A_t}\prod_{n\in\N^*}e^{\frac{\gamma}{n}\Re(\varepsilon_nz^{-n})}\d M_\gamma(z)}\right]\in\cC'\otimes\C_{rs}[\mu],\]
i.e. it is a polynomial of degree $rs$ in $\mu$ with coefficients in $\cC'$. Moreover, these coefficients are linear combinations of singular integrals at level smaller or equal to $ rs$ studied in the previous subsection. 
By Proposition \ref{cor:level_rs}, all these singular integrals are analytic at $\alpha_{r,s}$, so we deduce that $\alpha\mapsto\cP_\alpha(\varphi^\mathbf{k})$ is analytic at $\alpha_{r,s}$. 

This shows that $\alpha\mapsto\cP_\alpha(\varphi^\mathbf{k})$ is analytic at $\alpha_{r,s}$, with values in $\cC'$. By Cauchy's formula, we have the equality in $\cC'$
\[\cP_{\alpha_{r,s}}(\varphi^\mathbf{k})=\frac{1}{2i\pi}\oint\cP_\alpha(\varphi^\mathbf{k})\frac{\d\alpha}{\alpha-\alpha_{r,s}},\]
where the integral is over a small loop around $\alpha_{r,s}$. On the other hand, as elements of the Verma module $\cV_\alpha$ from \cite[Theorem 4.5]{BGKRV22}, we know that $\alpha\mapsto\cP_\alpha(\varphi^\mathbf{k})$ takes values in $e^{-\beta c}\cD(\cQ)$ away from the Kac table, so the previous display shows that $\cP_{\alpha_{r,s}}(\varphi^\mathbf{k})\in e^{-\beta c}\cD(\cQ)$. The same equation shows that $\alpha\mapsto\cP_\alpha(\varphi^\mathbf{k})$ is analytic around $\alpha_{r,s}$, with values in $e^{-\beta c}\cD(\cQ)$.
%
%
%
\end{proof}

Recall the Virasoro representation $(\bL_n)_{n\in\Z}$ introduced in \cite[Theorem 1.1]{BGKRV22}: for each $n\in\Z$, these operators can be defined as bounded operators $\bL_n:e^{-\beta c}\cD(\cQ)\to e^{-\beta c}\cD'(\cQ)$\footnote{The theorem is stated for $n\geq0$, but it extends easily to $\bL_{-n}=\bL_n^*$ since $|\langle\bL_{-n}F,G\rangle_\cH|=|\langle F,\bL_nG\rangle_\cH|\leq C\norm{F}_{\cD(\cQ)}\norm{G}_{\cD(\cQ)}$ for all $F,G\in\cC$. The same applies to weighted spaces.}. 
\begin{lemma}
Let $r,s\in\N^*$ and $\chi\in\cF$ such that $\alpha\mapsto\cP_\alpha(\chi)\in e^{-\beta c}\cD(\cQ)$ is analytic at $\alpha_{r,s}$. For all $\nu\in\cT$, the map $\alpha\mapsto\bL_{-\nu}\cP_\alpha(\chi)\in e^{-\beta c}\cD(\cQ)$ is analytic at $\alpha_{r,s}$. 
\end{lemma}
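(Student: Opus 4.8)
The plan is to argue by induction on the length $\ell$ of $\nu$, following the proof of Corollary~\ref{prop:regular_descendants} verbatim, with the singular integral $\cI_{\tilde r,\tilde\bolds}$ replaced by $\cP_\alpha(\chi)$. The case $\ell=0$ is the hypothesis. For the inductive step, write $\tilde\nu=(n,\nu_1,\dots,\nu_\ell)$ with $n\geq\nu_1$ and $\nu=(\nu_1,\dots,\nu_\ell)$, so that $\bL_{-\tilde\nu}=\bL_{-n}\bL_{-\nu}$; by the induction hypothesis, $\alpha\mapsto\bL_{-\nu}\cP_\alpha(\chi)$ is analytic with values in $e^{-\beta c}\cD(\cQ)$ on a disc $\cU$ around $\alpha_{r,s}$.

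Two observations drive the argument. First, since $\bL_{-n}\colon e^{-\beta c}\cD(\cQ)\to e^{-\beta c}\cD'(\cQ)$ is bounded (\cite[Theorem 1.1]{BGKRV22}, extended to negative modes via $\bL_{-n}=\bL_n^*$ as in the footnote above), composing it with the analytic $e^{-\beta c}\cD(\cQ)$-valued map $\bL_{-\nu}\cP_\alpha(\chi)$ shows that $\alpha\mapsto\bL_{-\tilde\nu}\cP_\alpha(\chi)$ is analytic on all of $\cU$ with values in the larger space $e^{-\beta c}\cD'(\cQ)$. Second, on the punctured disc $\cU\setminus\{\alpha_{r,s}\}$ this map already takes values in $e^{-\beta c}\cD(\cQ)$ and is analytic there: using the intertwining relation $\cP_\alpha\circ\bL^{0,\alpha}_m=\bL_m\circ\cP_\alpha$ one rewrites $\bL_{-\tilde\nu}\cP_\alpha(\chi)=\cP_\alpha(\bL^{0,\alpha}_{-\tilde\nu}\chi)$, and since the Sugawara operators are polynomial in $\alpha$ and preserve $\cF$ we have $\bL^{0,\alpha}_{-\tilde\nu}\chi=\sum_j\alpha^j\chi_j$ for finitely many $\chi_j\in\cF$; hence $\bL_{-\tilde\nu}\cP_\alpha(\chi)=\sum_j\alpha^j\cP_\alpha(\chi_j)$ is a $\C[\alpha]$-combination of maps that are analytic into $e^{-\beta c}\cD(\cQ)$ on $\C\setminus kac^-$, and as $kac^-$ is discrete we may shrink $\cU$ so that $\cU\cap kac^-=\{\alpha_{r,s}\}$.

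With these two observations in hand, the conclusion is the Cauchy integral argument of Corollary~\ref{prop:regular_descendants}: for a small circle $\gamma_0\subset\cU\setminus\{\alpha_{r,s}\}$ around $\alpha_{r,s}$, the first observation lets us apply Cauchy's formula in $e^{-\beta c}\cD'(\cQ)$, giving for all $\alpha'$ enclosed by $\gamma_0$
\[\bL_{-\tilde\nu}\cP_{\alpha'}(\chi)=\frac{1}{2i\pi}\oint_{\gamma_0}\bL_{-\tilde\nu}\cP_\alpha(\chi)\,\frac{\d\alpha}{\alpha-\alpha'};\]
by the second observation the integrand is a continuous $e^{-\beta c}\cD(\cQ)$-valued function on $\gamma_0$, so the right-hand side lies in $e^{-\beta c}\cD(\cQ)$ and depends analytically on $\alpha'$, upgrading the analyticity from $\cD'(\cQ)$ to $\cD(\cQ)$ across $\alpha_{r,s}$ and closing the induction. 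I do not expect a genuine obstacle, as the scheme is parallel to Corollary~\ref{prop:regular_descendants}; the only point requiring care is the second observation — verifying that the descendants of the Poisson operator are truly $\cD(\cQ)$-valued and analytic away from $\alpha_{r,s}$ — which is exactly where the intertwining relation and the discreteness of the Kac table are needed (so that $\alpha_{r,s}$ is isolated in $kac^-$ and no other pole of $\cP$ interferes nearby).
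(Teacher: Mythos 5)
Your proposal is correct and follows essentially the same route as the paper: induction on the length of $\nu$, using the boundedness of $\bL_{-n}\colon e^{-\beta c}\cD(\cQ)\to e^{-\beta c}\cD'(\cQ)$ to get analyticity in the dual space, the fact that away from the Kac table the descendants of $\cP_\alpha(\chi)$ lie in $e^{-\beta c}\cD(\cQ)$ (the paper cites the Verma-module structure of \cite[Theorem 4.5]{BGKRV22} where you invoke the intertwining relation — the same underlying fact), and Cauchy's formula to upgrade the values back to $e^{-\beta c}\cD(\cQ)$ across $\alpha_{r,s}$. No gap.
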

\begin{proof}
We work by induction on the length $\ell=\ell(\nu)$. The case $\ell=0$ is true by hypothesis. Suppose the result holds at rank $\ell\geq0$, and let us prove it at rank $\ell+1$. Let $\nu':=(\nu,\nu_{\ell+1})$ be a partition of length $\ell+1$ (with $\nu$ a partition of length $\ell$). Since $\bL_{-\nu_{\ell+1}}:e^{-\beta c}\cD(\cQ)\to e^{-\beta c}\cD'(\cQ)$ is continuous, the map $\alpha\mapsto\bL_{-\nu_{\ell+1}}(\bL_{-\nu}\cP_\alpha(\chi))$ is analytic at $\alpha_{r,s}$, with values in $e^{-\beta c}\cD'(\cQ)$. To show it's indeed valued in $e^{-\beta c}\cD(\cQ)$, we write Cauchy's formula
\[\bL_{-\nu'}\cP_{\alpha_{r,s}}(\chi)=\frac{1}{2i\pi}\oint\bL_{-\nu'}\cP_\alpha(\chi)\frac{\d\alpha}{\alpha-\alpha_{r,s}},\]
where the contour is a small loop around $\alpha_{r,s}$ which doesn't intersect the Kac table, and the equality holds in $e^{-\beta c}\cD'(\cQ)$. As in the previous proof, for $\alpha$ away from the Kac table, the state $\bL_{-\nu'}\cP_\alpha(\chi)$ is an element of the Verma module $\cV_\alpha\subset e^{-\beta c}\cD(\cQ)$ \cite[Theorem 4.5]{BGKRV22} (as a descendant of $\cP_\alpha(\chi)$). Hence the RHS of the previous display defines an element of $e^{-\beta c}\cD(\cQ)$, and so does the LHS.
\end{proof}

\subsection{Conclusion}	\label{subsec:proof_singular}

From the previous subsections, we have a family of Poisson operators:
\begin{equation*}
\left\lbrace\begin{aligned}
	&\cW_0\times\cF&&\to e^{-\beta c}\cD(\cQ)\\
	&(\alpha,\chi)&&\mapsto\cP_\alpha(\chi).
\end{aligned}\right.
\end{equation*}
For each $\chi\in\cF$, the map $\alpha\mapsto\cP_\alpha(\chi)$ is meromorphic in $\cW_0$ with possible poles in the Kac table $kac^-$. For each fixed $\alpha\in\cW_0\setminus kac^-$, $\cP_\alpha:\cF\to e^{-\beta c}\cD(\cQ)$ is a linear map. The explicit expression of $\cP_\alpha(\chi)$ is a linear combination of the singular integrals studied in Section \ref{sec:poisson}.

From the explicit expression $\Phi_\alpha(\bL_{-\nu}^{0,2Q-\alpha}\ind)=\Psi_{\alpha,\nu}$, the composition $\Phi_\alpha=\cP_\alpha\circ\Phi_\alpha^0:\cF\to e^{-\beta c}\cD(\cQ)$ is well-defined and analytic in $\cW_0$. In particular, the poles of $\cP$ may only occur at the zeros of $\Phi_\alpha^0$ (which is consistent with the fact that $\Phi_\alpha^0$ may vanish only on the Kac table). We then define $\cV_\alpha\subset e^{-\beta c}\cD(\cQ)$ by
\[\cV_\alpha:=\mathrm{ran}(\Phi_\alpha)\simeq\cF/\ker(\Phi_\alpha).\]
Moreover, the explicit expression of $\Phi_\alpha$ gives
\[\cV_\alpha=\mathrm{span}\,\{\Psi_{\alpha,\nu}=\bL_{-\nu}\Psi_\alpha|\,\nu\in\cT\}.\]
More precisely, for $\alpha\not\in kac$, \cite[Theorem 4.5]{BGKRV22} showed that $(\cV_\alpha,(\bL_n)_{n\in\Z})$ is a highest-weight representation, and the Poisson operator intertwines the free field and Liouville representations, i.e. $\Phi_\alpha\circ\bL_n^{0,2Q-\alpha}=\bL_n\circ\Phi_\alpha$ (here, we slightly abuse notations by identifying $\bL_n$ with its restriction to $\cV_\alpha$). The Virasoro relations extend to $\cV_{\alpha_{r,s}}$ by continuity in $\alpha$, so that $\cV_{\alpha_{r,s}}$ is also a highest-weight representation, generated by the highest-weight vector $\Psi_{\alpha_{r,s}}$. In particular, $\ker\Phi_{\alpha_{r,s}}$ is a submodule of $(\cV_{2Q-\alpha_{r,s}}^0,(\bL_n^{0,2Q-\alpha_{r,s}})_{n\in\Z})$. It remains to show that it is isomorphic to $\cV_{\alpha_{r,s}}^0$, i.e. $\ker\Phi_{\alpha_{r,s}}=\ker\Phi_{\alpha_{r,s}}^0$. Since $\cV_{\alpha_{r,s}}^0$ is the quotient of the Verma module by the maximal proper submodule, we have immediately $\ker\Phi_{\alpha_{r,s}}\subset\ker\Phi_{\alpha_{r,s}}^0$. The next proposition gives the converse inclusion and ends the proofs of Theorems \ref{thm:poisson} and \ref{thm:singular}.

\begin{proposition}
For all $r,s\in\N^*$, we have $\ker\Phi_{\alpha_{r,s}}^0\subset\ker\Phi_{\alpha_{r,s}}$. Thus, $\alpha\mapsto\cP_\alpha(\chi)$ is analytic at $\alpha_{r,s}$ for all $\chi\in\cF$.
\end{proposition}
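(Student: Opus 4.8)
The plan is to deduce the statement from the analyticity of the Poisson operator at $\alpha_{r,s}$. Indeed, once we know that $\alpha\mapsto\cP_\alpha(\chi)$ is analytic at $\alpha_{r,s}$ for every $\chi\in\cF$, the identity $\Phi_\alpha=\cP_\alpha\circ\Phi_\alpha^0$ --- valid for $\alpha$ near but distinct from $\alpha_{r,s}$ --- extends by continuity to $\alpha=\alpha_{r,s}$, whence $\ker\Phi_{\alpha_{r,s}}^0\subseteq\ker(\cP_{\alpha_{r,s}}\circ\Phi_{\alpha_{r,s}}^0)=\ker\Phi_{\alpha_{r,s}}$; combined with the reverse inclusion established above, this gives $\cV_{\alpha_{r,s}}\simeq\cV_{\alpha_{r,s}}^0$ and completes Theorem~\ref{thm:singular}. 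As the poles of $\cP_\alpha(\chi)$ are confined to $kac^-$, analyticity at every $\alpha_{r,s}$ is precisely Theorem~\ref{thm:poisson}, so this is really the crux. We will prove it first assuming $\gamma^2\notin\Q$ and then remove that restriction. The principal obstacle is the passage from low to high levels: the two lemmas preceding this proposition only control $\cP_\alpha(\varphi^{\mathbf k})$ for $|\mathbf k|\leq rs$ and its $\bL_{-n}$-descendants, but since $\cV_{\alpha_{r,s}}^0$ is a \emph{proper} quotient of the Verma module, the descendants of $\Psi_\alpha$ alone do not span $\cF$ above level $rs$; the argument must also bring in the level-$rs$ ``null seed'' and exploit the precise structure of the Feigin--Fuchs module at $\alpha_{r,s}$, with uniform control in $\alpha$.

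Assume $\gamma^2\notin\Q$. By the first of those two lemmas (whose proof invokes Proposition~\ref{cor:level_rs}), $\alpha\mapsto\cP_\alpha(\varphi^{\mathbf k})$ is analytic at $\alpha_{r,s}$ for all $|\mathbf k|\leq rs$ --- in particular for $\Psi_\alpha=\cP_\alpha(\ind)$ --- and, by the second, $\bL_{-n}\cP_\alpha(\chi)$ inherits analyticity at $\alpha_{r,s}$ from $\cP_\alpha(\chi)$, for each $n\geq1$. The key representation-theoretic input we will establish is: \emph{for $\alpha$ in a fixed neighbourhood of $\alpha_{r,s}$ and every integer $N\neq rs$, one has $\cF_N=\sum_{n=1}^N\bL_{-n}^{0,\alpha}\cF_{N-n}$, with a decomposition depending holomorphically on $\alpha$.} At $\alpha=\alpha_{r,s}$ this follows from the module structure: by Lemma~\ref{lem:frenkel}, $\cV_{\alpha_{r,s}}^0=\mathrm{span}\{\bL_{-\nu}^{0,\alpha_{r,s}}\ind\}$ is the irreducible quotient of the Verma module, so the Kac formula gives $\dim\cV_{\alpha_{r,s}}^{0,N}=p(N)-p(N-rs)$; hence $\cF/\cV_{\alpha_{r,s}}^0$ is graded with dimension $p(N-rs)$ at level $N$, its bottom piece (level $rs$) is a highest-weight vector of weight $\Delta_{\alpha_{r,s}}+rs$, and since $\gamma^2\notin\Q$ this weight is not on the Kac table, so $M(c_\mathrm{L},\Delta_{\alpha_{r,s}}+rs)$ is irreducible; comparing graded dimensions forces $\cF/\cV_{\alpha_{r,s}}^0$ to be cyclic, generated by its level-$rs$ part. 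Thus $(\cF,\bL_n^{0,\alpha_{r,s}})$ is generated by $\ind$ together with a single vector at level $rs$, so at every level $N\neq rs$ no further generator is needed, giving $\cF_N=\sum_{n\geq1}\bL_{-n}^{0,\alpha_{r,s}}\cF_{N-n}$. Surjectivity of a holomorphic family of linear maps between fixed finite-dimensional spaces is an open condition admitting a holomorphic right inverse, so the decomposition persists holomorphically for $\alpha$ near $\alpha_{r,s}$. We then induct on $N$: $N=0$ is the analyticity of $\Psi_\alpha$; $N=rs$ is the first lemma above (extended to $\cF_{rs}$ by linearity); and for $N\geq1$, $N\neq rs$, writing $\pi_{\mathbf k}=\sum_{n\geq1}\bL_{-n}^{0,\alpha}\xi_n(\alpha)$ with $\xi_n(\alpha)\in\cF_{N-n}$ holomorphic, we get $\cP_\alpha(\pi_{\mathbf k})=\sum_{n\geq1}\bL_{-n}\cP_\alpha(\xi_n(\alpha))$, which is analytic at $\alpha_{r,s}$ by the induction hypothesis at levels $<N$ and the second lemma. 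This shows $\cP^\gamma_\alpha(\chi)$ has no pole at any $\alpha_{r,s}$, and since all its poles lie on $kac^-$, it is analytic on $\cW_0$ whenever $\gamma^2\notin\Q$.

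Finally we remove the assumption $\gamma^2\notin\Q$. Fix $\gamma_0\in(0,2)$, a pair $r,s$, and $\chi\in\cF$; put $\alpha_0:=\alpha_{r,s}^{\gamma_0}$ and choose a small circle $\Gamma\subset\cW_0$ around $\alpha_0$. Since the Kac set is discrete and $\gamma\mapsto\alpha_{r',s'}^\gamma$ is continuous, a short check shows that for $\gamma$ close enough to $\gamma_0$ the only poles of $\cP^\gamma_\cdot(\chi)$ inside $\Gamma$ are the finitely many $\alpha_{r',s'}^\gamma$ colliding to $\alpha_0$ as $\gamma\to\gamma_0$, and these lie strictly inside $\Gamma$; so $\Gamma$ avoids all poles for such $\gamma$. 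By the formula of Lemma~\ref{lem:expression_descendants} and Corollary~\ref{cor:cts_mero}, the map $(\gamma,\alpha)\mapsto\cP^\gamma_\alpha(\chi)\in\cC'$ is continuous on a neighbourhood of $\{\gamma_0\}\times\Gamma$, hence each seminorm of $\cP^\gamma_\alpha(\chi)$ is bounded there by some $M$. For $\gamma$ near $\gamma_0$ with $\gamma^2\notin\Q$, the second paragraph makes $\cP^\gamma_\cdot(\chi)$ holomorphic on the disc bounded by $\Gamma$, so the maximum principle bounds it by $M$ on that disc; letting $\gamma\to\gamma_0$ along such $\gamma$ and using continuity at the points of the punctured disc yields the same bound for $\cP^{\gamma_0}_\alpha(\chi)$ near $\alpha_0$. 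Since $\cP^{\gamma_0}_\cdot(\chi)$ is holomorphic on the punctured disc (its poles being confined to $kac^-$), Riemann's removable singularity theorem upgrades this to analyticity at $\alpha_0$, first with values in $\cC'$ and then, by the usual Cauchy-formula argument, with values in $e^{-\beta c}\cD(\cQ)$. As noted in the first paragraph, this yields $\ker\Phi_{\alpha_{r,s}}^0\subseteq\ker\Phi_{\alpha_{r,s}}$ and concludes the proof.
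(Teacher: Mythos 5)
Your proof is correct, and in the crucial step (the case $\gamma^2\notin\Q$) it takes a genuinely different route from the paper's. The paper argues on the submodule side: it takes the level-$rs$ singular vector $\chi_{r,s}$ of the Verma module $\cV^0_{2Q-\alpha_{r,s}}$, which generates $\ker\Phi^0_{\alpha_{r,s}}$, notes that $\Phi^0_\alpha(\chi_{r,s})$ vanishes to first order at $\alpha_{r,s}$ with derivative $\chi'\in\cF_{rs}$, and then uses analyticity of $\cP_\alpha$ at level $rs$ together with Corollary \ref{prop:regular_descendants} to conclude $\Phi_{\alpha_{r,s}}(\bL^{0,2Q-\alpha_{r,s}}_{-\nu}\chi_{r,s})=0$ for all $\nu$, i.e.\ the kernel inclusion, from which the analyticity of $\cP_\alpha$ is read off. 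You argue on the quotient side: you identify $\cF/\cV^0_{\alpha_{r,s}}$ as an irreducible Verma module of the non-degenerate weight $\Delta_{\alpha_{r,s}}+rs$ generated by a cosingular vector at level $rs$ (your verification that this weight avoids the Kac table when $\gamma^2\notin\Q$ is correct), deduce that the Feigin--Fuchs module $(\cF,\bL^{0,\alpha_{r,s}}_n)$ is generated by $\ind$ and a single level-$rs$ vector, hence that $\cF_N=\sum_{n\geq1}\bL^{0,\alpha}_{-n}\cF_{N-n}$ holomorphically near $\alpha_{r,s}$ for every level $N\geq1$ with $N\neq rs$, and run a strong induction on $N$ with base cases $N=0$ and $N=rs$. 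Both arguments consume exactly the same analytic inputs (the two lemmas of Section \ref{subsec:descendants}) and dual halves of the same representation-theoretic fact, but yours proves the analyticity of $\cP_\alpha$ on all of $\cF$ first and obtains the kernel inclusion for free; this has the advantage of making fully explicit the propagation from ``analytic at levels $\leq rs$'' to ``analytic at all levels,'' a step the paper leaves compressed in the sentence ``Thus, $\alpha\mapsto\cP_\alpha(\chi)$ is analytic at $\alpha_{r,s}$.'' For rational $\gamma^2$, your maximum-principle-plus-removable-singularity argument is a repackaging of the paper's computation that every negative Laurent coefficient $\oint\cP^{\gamma}_\alpha(\chi)(\alpha-\alpha_{r,s})^{p-1}\,\d\alpha$ vanishes by $\gamma$-continuity on a contour kept away from the Kac table; the two are interchangeable.
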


\begin{proof}
\emph{Case $\gamma^2\not\in\Q$.}

In this case, $\ker\Phi_{\alpha_{r,s}}^0$ is the Virasoro module (for the representation $(\bL_n^{0,2Q-\alpha_{r,s}})_{n\in\Z}$) generated by the level $rs$-singular vector. Let $\chi_{r,s}\in\cV^0_{2Q-\alpha_{r,s}}\simeq\cF$ be the level-$rs$ singular vector. Then, $\lim\limits_{\alpha\to \alpha_{r,s}}\frac{1}{\alpha-\alpha_{r,s}}\Phi_\alpha^0(\chi_{r,s})=\chi'$ for some $\chi'\in\cF_{rs}$, by analyticity in $\alpha$. From Lemma \ref{cor:level_rs}, $\alpha\mapsto\cP_\alpha(\chi')$ is analytic at $\alpha_{r,s}$, so that $\lim\limits_{\alpha\to \alpha_{r,s}}\frac{1}{\alpha-\alpha_{r,s}}\Phi_\alpha(\chi_{r,s})=\cP_{\alpha_{r,s}}(\chi')$. In particular, $\Phi_{\alpha_{r,s}}(\chi_{r,s})=0$.

It remains to study the descendant states. By the intertwining property and Proposition \ref{prop:regular_descendants}, we have
\begin{align*}
	\frac{1}{\alpha-\alpha_{r,s}}\Phi_{\alpha_{r,s}}(\bL_{-\nu}^{0,2Q-\alpha_{r,s}}\chi_{r,s})=\frac{1}{\alpha-\alpha_{r,s}}\bL_{-\nu}\cP_\alpha(\Phi_\alpha^0(\chi_{r,s}))\underset{\alpha\to\alpha_{r,s}}\to\,\bL_{-\nu}\cP_{\alpha_{r,s}}(\chi').
\end{align*}
In particular, $\Phi_{\alpha_{r,s}}(\bL_{-\nu}^{0,2Q-\alpha_{r,s}}\chi_{r,s})=0$.

\emph{General case.}

Let $(\gamma_n)\in(0,2)^\N$ such that $\gamma^2_n\not\in\Q$ and $\gamma_n\to\gamma$ as $n\to\infty$. Let $\chi\in\cF$. By Corollary \ref{cor:cts_mero}, we have for all $p\in\N^*$,
\[\oint\cP_\alpha^\gamma(\chi)(\alpha-\alpha_{r,s})^{p-1}\d\alpha=\underset{n\to\infty}\lim\,\oint\cP_\alpha^{\gamma_n}(\chi)(\alpha-\alpha_{r,s})^{p-1}\d\alpha=0,\]
where the contour is a small loop around $\alpha_{r,s}$ which is uniformly bounded away from the Kac table. This contour exists since the Kac table is discrete and depends continuously on $\gamma$, and the range of analyticity of the Poisson operator also depends continuously on $\gamma$. This proves that the polar part of $\cP_{\alpha_{r,s}}^\gamma(\chi)$ vanishes at $\alpha_{r,s}$, i.e. $\cP_\alpha^\gamma(\chi)$ is regular at $\alpha_{r,s}$.
\end{proof}

\begin{remark}
For $\gamma^2\not\in\Q$, the previous proof identifies $\ker\Phi_{\alpha_{r,s}}$ with the module generated by $\Phi_{\alpha_{r,s}}(\chi_{r,s}')=\bS_{\alpha_{r,s}}\Psi_{\alpha_{r,s}}'\in e^{-\beta c}\cD(\cQ)$, where we denote $\Psi_\alpha'=\del_\alpha\Psi_\alpha$, and $\bS_{\alpha_{r,s}}$ is the combination of Virasoro generators at level $rs$ creating the singular vector. This vector is of highest-weight $\Delta_{\alpha_{r,s}}+rs$ for the representation $(\bL_n)_{n\in\Z}$, but is a primary of weight $\Delta_{\alpha_{r,s}}$ for the antiholomorphic representation $(\tilde{\bL}_n)_{n\in\Z}$. In particular, $\tilde{\bS}_{\alpha_{r,s}}\bS_{\alpha_{r,s}}\Psi_{\alpha_{r,s}}'$ is a singular vector for the antiholomorphic representation. It turns out that the Poisson operator $\alpha\mapsto\cP_\alpha(\chi_{r,s}')$ has a pole at $\alpha_{r,s}$, which is the phenomenon underlying the higher equations of motions \cite{Zamolodchikov03_HEM}. This has been studied in \cite{BaverezWu_hem} for $rs=2$, and we believe that our method could be extended to arbitrary $(r,s)\in\N^*$.
\end{remark}

\appendix

\section{Martingales of the Ornstein--Uhlenbeck process}\label{app:ou}
Let $(\Omega,(\cF_t)_{t\geq0},\P)$ be a filtered probability space on which is defined a standard Brownian motion $(W_t)_{t\geq0}$. The Ornstein--Uhlenbeck (OU) process with parameters $\kappa,\sigma>0$ is the solution to the SDE
\[\d U_t=-\kappa U_t\d t+\sigma\d W_t.\]
The process $M_t:=e^{\kappa t}U_t$ is a martingale and solves the SDE
\[\d M_t=\sigma e^{\kappa t}\d W_t.\]
Its quadratic variation is given by
\[[M]_t=\sigma^2\int_0^te^{2\kappa s}\d s=\frac{\sigma^2}{\kappa}e^{\kappa t}\sinh(\kappa t).\]
For each $\varepsilon\in\R$, we can form the exponential martingale
\[\cE_\varepsilon(t):=e^{\varepsilon M_t-\frac{\varepsilon^2}{2}[M]_t}.\]
The initial value is $\cE_\varepsilon(0)=e^{\varepsilon U_0}$. This martingale is real analytic in $\varepsilon$, and we can write
\[\cE_\varepsilon(t)=:\sum_{n=0}^\infty M_n(t)\frac{\varepsilon^n}{n!},\]
where each $M_n$ is a martingale. Explicitly, $M_n(t)=[M]_t^{n/2}\mathrm{He}_n(\frac{M_t}{[M]_t^{1/2}})$, where $\mathrm{He}_n$ is the $n^{\text{th}}$ Hermite polynomial. For instance, we have $M_1(t)=M_t$ and $M_2(t)=M_t^2-[M]_t$.

By the Girsanov transform, reweighting the measure by $e^{-\varepsilon U_0}\cE_\varepsilon(t)$ amounts to the shift $M_t\mapsto M_t+\varepsilon[M]_t$, or equivalently $U_t\mapsto U_t+\frac{\varepsilon\sigma^2}{\kappa}\sinh(\kappa t)$. More precisely, we have for every measurable function $F:C^0([0,t_0])\to\R$,
\begin{equation}\label{eq:girsanov_ou}
\E\left[\cE_\varepsilon(t_0)F\left((U_t)_{0\leq t\leq t_0}\right)\right]=e^{\varepsilon U_0}\E\left[F\left(\left(U_t+\frac{\varepsilon\sigma^2}{\kappa}\sinh(\kappa t)\right)_{0\leq t\leq t_0}\right)\right].
\end{equation}
The shifted process and the original one are mutually absolutely continuous up to finite $t_0$, but they are singular for $t_0=\infty$ (since the shifted process diverges to $\mathrm{sgn}(\varepsilon)\infty$ almost surely).

\section{The Virasoro algebra}\label{app:virasoro}
For the reader's convenience, we recall some notions and terminology from the representation theory of the Virasoro algebra. The main reference is \cite{KacRaina_Bombay}. The interested reader will find more information on the structure of degenerate highest-weight modules in \cite{Astashkevich97}, which was pioneered earlier by Feigin--Fuchs \cite{FeiginFuchs}. The textbook \cite{dFMS_BigYellowBook} also gives a thorough review of the material presented in this appendix, see in particular Sections~7.1-2.

The (complex) Witt algebra $\mathfrak{w}$ is the infinite dimensional Lie algebra of holomorphic vector fields in the neighbourhood of the unit circle $\S^1$. The commutation relations read
\[[v\del_z,w\del_z]=(vw'-v'w)\del_z,\qquad\forall\, v\del_z,w\del_z\in\mathfrak{w}.\]
It admits the generators $\bv_n=-z^{n+1}\del_z$, and the commutation relations for the generators is the familiar formula
\[[\bv_n,\bv_m]=(n-m)\bv_{n+m}.\]
Details on the Witt algebra can be found in \cite[Section 1.1]{KacRaina_Bombay}

The \emph{Virasoro cocycle} is the 2-cocycle on $\mathfrak{w}$ given by
\begin{equation}\label{eq:vir_cocycle}
\omega(v\del_z,w\del_z):=\frac{1}{24i\pi}\oint_{\S^1}v'''w\d z.
\end{equation}
In terms of the generators, we have $\omega(\bv_n,\bv_m)=\frac{1}{12}(n^3-n)\delta_{n,-m}$. The \emph{Virasoro algebra} $\mathfrak{v}$ is the corresponding central extension of $\mathfrak{w}$ \cite[Section 1.3]{KacRaina_Bombay}. It is customary to write the generators $(L_n)_{n\in\Z}$ and the central element $\mathbf{c}$, so that the commutation relations read:
\[[L_n,L_m]=(n-m)L_{n+m}+\frac{\mathbf{c}}{12}(n^3-n)\delta_{n,-m};\qquad[L_n,\mathbf{c}]=0.\]
From the commutation relations, we have a Lie subalgebra $\mathfrak{v}^+$ (resp. $\mathfrak{v}^-$) generated by $\{L_n,\,n\geq1\}$ (resp. $\{L_n,\,n\leq-1\}$). An \emph{integer partition} is a non-decreasing sequence of integers with finitely many non-zero terms. The set of integer partitions is denoted $\cT$. The \emph{level} of a partition is the number $|\nu|=\sum_{n=1}^\infty\nu_n$ partitioned by $\nu$. By the Poincar\'e-Birkhoff-Witt theorem, the universal enveloping algebra $\cU(\mathfrak{v}^-)$ admits a linear basis $\{L_{-\nu}=L_{-\nu_1}L_{-\nu_2}\cdots L_{-\nu_\ell}|\,\nu=(\nu_1,...,\nu_\ell)\in\cT\}$. 


A \emph{Virasoro module} is a vector space $V$ together with a linear map $\rho:\mathfrak{v}\to\mathrm{End}(V)$ such that $[\rho(L_n),\rho(L_m)]=(n-m)\rho(L_{n+m})+\frac{\rho(\mathbf{c})}{12}(n^3-n)\delta_{n,-m}$ for all $n,m\in\Z$, and $\rho(\mathbf{c})=\mathrm{c}_\mathrm{L}\mathrm{Id}_V$ for some scalar $c_\mathrm{L}$ called the \emph{central charge}. A \emph{submodule} of $(V,\rho)$ is a linear subspace $V'\subset V$ such that $(V',\rho_{|V'})$ is again a Virasoro module. A module $(V,\rho)$ is \emph{irreducible} if it has no non-trivial submodule. 

Given a module $(V,\rho)$ as above and $v\in V$, we will write $L_n\cdot v:=\rho(L_n)v$ for simplicity. A vector $v\in V$ is a \emph{highest-weight vector} of (highest) weight $\Delta\in\C$ if $L_n\cdot v=0$ for all $n\geq1$ and $L_0\cdot v=\Delta v$. A module $V$ is a \emph{highest-weight module} of (highest) weight $\Delta$ if $V$ is spanned (algebraically) by $\{L_{-\nu}\cdot v|\,\nu\in\cT\}$ for some highest-weight vector $v$ \cite[Section 3.2]{KacRaina_Bombay}. Namely, $V$ is obtained by acting on $v$ with $\cU(\mathfrak{v}^-)$. From the commutation relations, $L_{-\nu}\cdot v$ is an eigenstate of $L_0$ with eigenvalue $\Delta+|\nu|$. This gives a grading of $V$ by the level: $V=\oplus_{N\in\N}V_N$. The space $V_0$ is one-dimensional (spanned by $v$), and the vectors at level $N>0$ are the \emph{descendants} of $v$. It is customary to parametrise the central charge and the highest weight with the so-called \emph{Liouville parameters} $\gamma,\alpha\in\C$:
\[c_\mathrm{L}=1+6Q^2;\qquad\Delta=\frac{\alpha}{2}(Q-\frac{\alpha}{2}),\]
and $Q=\frac{\gamma}{2}+\frac{2}{\gamma}$. Notice that the parametrisation $(\gamma,\alpha)\mapsto(c_\mathrm{L},\Delta)$ is not one-to-one, since $\Delta$ is invariant under $\alpha\mapsto 2Q-\alpha$ and $c_\mathrm{L}$ is invariant under $\gamma\mapsto\frac{4}{\gamma}$.

A \emph{Verma module} is a highest-weight module such that the canonical map $\cU(\mathfrak{v}^-)\to V,\,L_{-\nu}\mapsto L_{-\nu}\cdot v$ is a linear isomorphism: in other words, the states $\{L_{-\nu}\cdot v|\,\nu\in\cT\}$ are linearly independent \cite[Definition 3.2]{KacRaina_Bombay}. For each $c_\mathrm{L},\Delta\in\C$, the Verma module with central charge $c_\mathrm{L}$ and highest-weight $\Delta$ exists and is unique up to isomorphism \cite[Section 3.3]{KacRaina_Bombay}. It is denoted $M(c_\mathrm{L},\Delta)$. Any highest-weight module $V$ is isomorphic to the quotient of $M(c_\mathrm{L},\Delta)$ by a proper submodule. We then have a canonical projection $M(c_\mathrm{L},\Delta)\to V$ sending $L_{-\nu}$ to the descendant state $L_{-\nu}\cdot v\in V$. A vector $w$ in the highest-weight representation $V$ is \emph{singular} if it is annihilated by all $L_n$ for $n\geq1$, so that it generates its own highest-weight module. In particular, if $M(c_\mathrm{L},\Delta)$ contains a singular vector at positive level, then it is reducible. The converse is also true: by \cite[Proposition 3.6]{KacRaina_Bombay}, $M(c_\mathrm{L},\Delta)$ is irreducible if and only if the only singular vectors are multiples of the highest-weight vector. By \cite[Proposition 3.3]{KacRaina_Bombay}, $M(c_\mathrm{L},\Delta)$ has a unique maximal proper submodule $I(c_\mathrm{L},\Delta)$ containing all other proper submodules. The quotient 
\[V(c_\mathrm{L},\Delta):=M(c_\mathrm{L},\Delta)/I(c_\mathrm{L},\Delta)\] 
is irreducible. Note that $I(c_\mathrm{L},\Delta)=\{0\}$ if and only if $M(c_\mathrm{L},\Delta)$ is irreducible.

For each $\nu,\nu'\in\cT$ with the same level $N$, $L_{\nu'}\cdot (L_{-\nu}\cdot v)$ is at level zero, so we have $L_{\nu'}\cdot(L_{-\nu}\cdot v)=S_N(\nu,\nu')v$ for some scalar $S_N(\nu,\nu')$. The matrix $(S_N(\nu,\nu'))_{|\nu|=|\nu'|=N}$ is called the \emph{Shapovalov matrix} and it defines a Hermitian form on the level-$N$ subspace of $M(c_\mathrm{L},\Delta)$. The collection of all these finite-dimensional forms defines a Hermitian form on $M(c_\mathrm{L},\Delta)$, called the Shapovalov form. By construction, the Shapovalov-adjoint of $L_n$ is $L_{-n}$ for all $n\in\Z$, and the Shapovalov form is characterised by this property, modulo multiplicative constant. It is non-degenerate for all $N\in\N$ if and only if $M(c_\mathrm{L},\Delta)$ is irreducible. More precisely, $I(c_\mathrm{L},\Delta)$ coincides with the kernel of the Shapovalov form. See \cite[Proposition 3.4]{KacRaina_Bombay} for details. The Kac determinant formula gives an explicit expression for the determinant of the Shapovalov matrix \cite[Theorem 8.1]{KacRaina_Bombay}:
\[\det S_N\propto\prod_{1\leq rs\leq N}(\Delta-\Delta_{r,s})^{p(N-rs)},\]
where $\Delta_{r,s}$ is the weight corresponding to $\alpha_{r,s}=Q-\frac{\gamma}{2}r-\frac{2}{\gamma}s$, with $r,s\in\N^*$. In particular, the Shapovalov form is degenerate at some level if and only if $\alpha$ belongs to the \emph{Kac table} $kac=kac^-\sqcup kac^+$, with $kac^\pm=(1\pm\N^*)\frac{\gamma}{2}+(1\pm\N^*)\frac{2}{\gamma}$. The two sets satisfy $kac^-=2Q-kac^+$, and $kac^-\subset\R_-$.

\bibliographystyle{alpha}
\bibliography{bpz}
\end{document}